\theoremstyle{plain}
\newtheorem{theorem}{Theorem}
\newtheorem{corollary}{Corollary}
\newtheorem{proposition}{Proposition}
\newtheorem{lemma}{Lemma}
\theoremstyle{remark}
\newtheorem{remark}{Remark}
\theoremstyle{definition}
\newtheorem{definition}{Definition}
\newtheorem*{EigvEst}{One Step Eigenvector Estimation}{\bf}{\it}
\newcommand{\bJ}{\mathcal{J}}
\newcommand{\Ni}{\mathcal{N}}
\newcommand{\N}{\mathbb{N}}
\newcommand{\be}{{\bf e}}
\newcommand{\bx}{{\bf x}}
\newcommand{\by}{{\bf y}}
\newcommand{\bp}{{\bf p}}
\newcommand{\bz}{{\bf z}}
\newcommand{\R}{\mathbb{R}}
\newcommand{\eigspace}{\mathop{\rm eigenspace}}
\DeclareMathOperator{\dett}{det}
\DeclareMathOperator{\symm}{{\bf Sym}}
\DeclareMathOperator{\diag}{diag}
\DeclareMathOperator{\divg}{div}
\DeclareMathOperator{\argmin}{\mathop{\rm arg\,min}}
\title{An unconstrained framework for eigenvalue problems}
\author{Yunho Kim\footnote{Department of Mathematical Sciences, UNIST, Ulsan, South Korea, Email: yunhokim@unist.ac.kr}}
\begin{document}
\maketitle

\abstract{In this paper, we propose an unconstrained framework for eigenvalue problems in both discrete and continuous settings. We begin our discussion to solve a generalized eigenvalue problem $A{\bx} = \lambda B\bx$ with two $N\times N$ real symmetric matrices $A, B$ via minimizing a proposed functional whose nonzero critical points $\bx\in\R^N$ solve the eigenvalue problem and whose local minimizers are indeed global minimizers. Inspired by the properties of the proposed functional to be minimized, we provide analysis on convergence of various algorithms either to find critical points or local minimizers. Using the same framework, we will also present an eigenvalue problem for differential operators in the continuous setting. It will be interesting to see that this unconstrained framework is designed to find the smallest eigenvalue through matrix addition and multiplication and that a solution $\bx\in\R^N$ and the matrix $B$  can compute the corresponding eigenvalue $\lambda$ without using $A$ in the case of $A\bx=\lambda B\bx$. At the end, we will present a few numerical experiments which will confirm our analysis.
}

\section{Introduction}
\label{intro}

Given an $N\times N$ matrix $A$, the eigenvalue problem of our interest is to find an eigenvalue and its corresponding eigenvector of $A$, that is, to solve $A\bx = \lambda \bx$ for $\bx$ and $\lambda$. This is one of the most fundamental problems in mathematics with applications to all other fields of science. Especially, one may be interested in estimating the largest and the smallest eigenvalues. If $A$ is symmetric and positive definite, then finding the smallest eigenvalue of $A$ is the same as finding the largest eigenvalue of $A^{-1}$, which usually involves solving systems of linear equations of type $A\bx=b$. Then, we ask ourselves a question: ``Is it possible to compute the smallest eigenvalue of $A$ through only basic matrix operations such as multiplication and addition, without solving $A\bx=b$?" 

Our interest of estimating the smallest eigenvalue and its corresponding eigenvector extends to the following infinite dimensional application, as well. On a compact manifold ${\cal M}$, eigenvalues of the Laplacian $-\Delta$  reveals important structures of ${\cal M}$, which makes understanding the eigenvalues of $-\Delta$ on ${\cal M}$ very important. This has interesting applications. For example, in image processing there are a few interesting works (e.g. \cite{N13}, \cite{N12}, \cite{N11}) to distinguish reconstructed objects from point cloud data by evaluating $-\Delta$ on the surfaces of the objects. We can even consider general self-adjoint linear elliptic operators and find their eigenvalues and eigenfunctions. With these theoretical and numerical points of view in mind, our main discussion will be concentrated on finding the smallest eigenvalue and a corresponding eigenvector of a nonzero symmetric matrix, which leads  us to begin with the following well-known constrained problem: given a symmetric and positive definite matrix $A$,

\begin{equation}\label{intro:eigproblem}
\min_{\bx\in\mathbb{R}^N} \langle \bx,A\bx\rangle\quad\text{ subject to }\quad \|\bx\|^2 = 1.
\end{equation}

There have been a large number of works to solve \eqref{intro:eigproblem} by the name of  inverse iteration methods. In particular, we would like to mention the work \cite{N1} by J.E. Dennis and R.A. Tapia, which surveys historical developments of inverse and shifted inverse iterations and of Rayleigh quotient iteration, and which approaches the listed methods from the viewpoint of the Newton's method. 
The unconstrained version of \eqref{intro:eigproblem} analyzed in \cite{N1} is
\begin{equation}\label{intro:N1}
\min_{\bx\in\mathbb{R}^N}\langle \bx,A\bx\rangle + \frac{\gamma}{2}(\|\bx\|^2-1)^2.
\end{equation}
The authors of \cite{N1} explained why the inverse and the shifted inverse Rayleigh quotient iterations are fast and effective by showing the equivalence between the inverse Rayleigh quotient iteration and the Newton's method for \eqref{intro:N1}, and also between the shifted inverse Rayleigh quotient iteration and the Newton's method for the shifted version of \eqref{intro:N1},  when the given matrix $A$ is symmetric and invertible. We refer to \cite{N1}, and references therein, any interested reader in the developments of the inverse and shifted inverse Rayleigh quotient iteration methods. 

There are, however, a few disadvantageous features of the functional in \eqref{intro:N1} that we paid attention to. First of all, \cite{N1} considered only nonsingular matrices for \eqref{intro:N1} just as all other conventional methods do. Second of all, in the simplest case when $A$ is symmetric and positive definite, if $0<\gamma<\lambda_1$, then the zero vector is the only critical point of the functional in \eqref{intro:N1} and, even if $\gamma \geq\lambda_1$, the critical points of the functional in \eqref{intro:N1} are only the eigenvalues of $A$ less than $\gamma$. Hence, our goal is of two folds: 1) extending existing theories to singular matrices,  and 2) removing the additional limitations imposed by the parameter $\gamma$ in \eqref{intro:N1}. Noting that the functional in \eqref{intro:N1} contains the term
$$
(\|\bx\|^2-1)^2 = (\|\bx\|+1)^2(\|\bx\|-1)^2,
$$
we believed that the factor $(\|\bx\|+1)^2$, even though this is convex, is not desirable because the factor tries to push the norm $\|\bx\|$ towards $0$ during minimization.  Therefore, our analysis begins without this factor.

The rest of this manuscript is organized as follows. 
In Section 2, we present our unconstrained framework for solving a generalized eigenvalue problem in a finite dimensional space, where we propose an appropriate functional to be minimized and analyze its interesting properties. We, then, apply the gradient descent method and the Newton's method to the proposed minimization problem for solving eigenvalue problems and provide analysis for convergence either to a global minimizer or to a nonzero critical point.  Moreover, we present a few variants of our approach for quantitative analysis of the error between a true eigenvector and an estimated one. In Section 3, we present the same unconstrained framework for eigenvalue problems in an infinite dimensional space  such as finding eigenfunctions of self-adjoint differential operators, which show universality of our unconstrained framework. In Section 4, we present numerical aspects of our proposed method confirming the theoretical results obtained in the previous sections.

\section{A generalized eigenvalue problem on a finite dimensional space}
\label{sec:1}
First of all, we will consider a generalized eigenvalue problem $A{\bx} = \lambda B\bx$ with two $N\times N$ real symmetric matrices $A,B$, where $B$ is positive definite. Notice that it becomes the usual eigenvalue problem $A\bx = \lambda \bx$ when $B$ is the identity matrix $I$.

Here and in what follows, ${\bf M}_{M\times N}(\R)$, $\symm_N(\R)$, $\symm_{N,p}(\R)$ mean, respectively, the set of $M\times N$ real matrices, the set of $N\times N$ real symmetric matrices, the set of $N\times N$ real symmetric and positive definite matrices. The set of $N\times N$ real matrices will be simply denoted by ${\bf M}_N(\R)$. The case of complex matrices will be mentioned later. Moreover, we consider $\bx\in\R^N$ as an $N\times 1$ column vector and for $\bx, \by\in \R^N$, $\langle \bx,\by\rangle$ will be denoted by $\by^T\bx$ and $\|\bx\|=\sqrt{\langle \bx,\bx\rangle}$. For $A\in{\bf M}_{M\times N}(\R)$, the operator norm of $A$ will be denoted by $\|A\|_{op}$, which is 
$$
\|A\|_{op} = \sup_{\|\bx\|=1}\|A\bx\|.
$$
Note that $\|A\|_{op}$ is the largest singular value of $A$. We also denote by $\sqrt{B}$ for $B\in\symm_{N,p}(\R)$ the matrix $Q\sqrt{\Lambda} Q^T$, where $Q\Lambda Q^T$ is a diagonalization of $B$ and $\sqrt{\Lambda} = \diag(\sqrt{\lambda_1},\dots,\sqrt{\lambda_N})$ when $\Lambda = \diag(\lambda_1,\dots,\lambda_N)$.

Given $A\in \symm_N(\R), B\in \symm_{N,p}(\R)$, we define a functional $F_{A,B}:\R^N\to\R$ by
\begin{equation}\label{eigeq:functional}
F_{A,B}(\bx) = \frac{1}{2}\langle \bx,A\bx\rangle + \frac{\gamma}{2}\langle \bx,B\bx\rangle - \gamma\sqrt{\langle\bx,B\bx\rangle}
\end{equation}
and propose the the following unconstrained problem
\begin{equation}\label{geigpb}
\min_{\bx\in\R^N}F_{A,B}(\bx).
\end{equation}

When $B=I$, we will simply drop the subscript $B$ by writing $F_A$, instead of $F_{A,B}$.
Then, we can see, by a change of variables, $\by = \sqrt{B}\bx$, that \eqref{eigeq:functional} becomes
$$
F_{A,B}(\bx) = \frac{1}{2}\langle \by, C\by\rangle + \frac{\gamma}{2}\|\by\|^2 -\gamma \|\by\| = F_{C}(\by)
$$
with $C =\sqrt{B}^{-1}A\sqrt{B}^{-1}\in\symm_{N}(\R)$, which means that analyzing the functional $F_{A,B}$ is equivalent to analyzing $F_C$. Note that $F_{A,B}$ and $F_C$ are both differentiable at $\bx\neq0$ and that
$$
\nabla F_{A,B}(\bx) =  \nabla F_C(\sqrt{B}\bx)\sqrt{B}
$$
implies the sets of nonzero critical points of $F_{A,B}$ and $F_C$ are equivalent up to the change of variables: $\by = \sqrt{B}\bx$. More precisely,
$$
\nabla F_{A,B}(\bx) = A\bx+\gamma \Big(1- \frac{1}{\sqrt{\langle B\bx,\bx\rangle}}\Big)B\bx\ \text{ and }\ \nabla F_C(\by) = C\by +\gamma\Big(1-\frac{1}{\|\by\|}\Big)\by
$$
imply that we can solve $A\bx = \lambda B\bx$ by finding nonzero critical points of $F_C$.


Therefore, we will begin our discussion with $F_A$ for $A\in\symm_{N}(\R)$ and investigate the minimization problem 
\begin{equation}\label{eigpb}
\min_{\bx\in\R^N}F_A(\bx),
\end{equation}
which is equivalent to
$$
\min_{\bx\in\R^N}\frac{1}{2}\langle \bx,A\bx\rangle +\frac{\gamma}{2}(\|\bx\|-1)^2.
$$

\begin{lemma}\label{eiglem:1}
Let $\lambda_1\in\R$ be the smallest eigenvalue of $A$. For $\gamma>\max(0,-\lambda_1)$, the set of nonzero critical points of $F_A$ is
$$
\Big\{ \bx\in \R^N : A\bx = \lambda \bx  \text{ for some }\lambda\in\R \text{ with } \|\bx\|=\frac{\gamma}{\gamma+ \lambda}\Big\},
$$
and
$$
\min_{\bx\in\R^N} F_A(\bx) = -\frac{\gamma^2}{2(\gamma + \lambda_1)}.
$$
\end{lemma}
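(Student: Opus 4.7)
The plan is to attack both claims by directly working with the gradient equation $\nabla F_A(\bx)=0$, using the explicit formula $\nabla F_A(\bx)=A\bx+\gamma(1-1/\|\bx\|)\bx$ valid for $\bx\neq 0$ (which the excerpt has already derived in the $B=I$ case).

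First I would handle the critical-point description. Setting the gradient to zero and isolating $A\bx$ gives $A\bx=\gamma(1/\|\bx\|-1)\bx$, so any nonzero critical point is automatically an eigenvector of $A$ with eigenvalue $\lambda=\gamma(1/\|\bx\|-1)$. Solving this for the norm yields $\|\bx\|=\gamma/(\gamma+\lambda)$, which is a legitimate positive number precisely because the hypothesis $\gamma>\max(0,-\lambda_1)$ forces $\gamma+\lambda>0$ for every eigenvalue $\lambda\geq\lambda_1$. The converse inclusion is a one-line substitution: if $A\bx=\lambda\bx$ with $\|\bx\|=\gamma/(\gamma+\lambda)$, then $1/\|\bx\|-1=\lambda/\gamma$ and $\nabla F_A(\bx)=\lambda\bx-\lambda\bx=0$. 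This pins down the critical-point set exactly as stated.

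Next I would evaluate $F_A$ at an arbitrary critical point. Using $\langle\bx,A\bx\rangle=\lambda\|\bx\|^2$, the functional collapses to $\tfrac{\gamma+\lambda}{2}\|\bx\|^2-\gamma\|\bx\|$, and plugging in $\|\bx\|=\gamma/(\gamma+\lambda)$ telescopes to $-\gamma^2/(2(\gamma+\lambda))$. Among the admissible eigenvalues, this critical value is smallest exactly when $\gamma+\lambda$ is smallest positive, i.e.\ when $\lambda=\lambda_1$, giving the candidate minimum $-\gamma^2/(2(\gamma+\lambda_1))$.

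To conclude that this candidate really is the global minimum, I would argue existence and then rule out the zero critical point. For existence, use the spectral bound $\langle\bx,A\bx\rangle\geq\lambda_1\|\bx\|^2$ to get $F_A(\bx)\geq\tfrac{\gamma+\lambda_1}{2}\|\bx\|^2-\gamma\|\bx\|$; since $\gamma+\lambda_1>0$, this lower bound is coercive, so the continuous function $F_A$ attains its infimum. The minimizer is a critical point (possibly $0$ if one only considers stationarity in the classical sense, but note $F_A$ is still continuous at $0$ and one can check $F_A$ takes negative values near $0$ in any eigen-direction of $\lambda_1$, so $0$ cannot be the minimizer). Hence the minimum occurs among the nonzero critical points, and the smallest achievable value is $-\gamma^2/(2(\gamma+\lambda_1))$.

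The only genuinely delicate point is the treatment of $\bx=0$: $F_A$ is not differentiable there, so one cannot merely appeal to $\nabla F_A=0$. I would dispose of it by exhibiting a descent direction, namely taking $\bx=t\bv$ with $\bv$ a unit $\lambda_1$-eigenvector and $t>0$ small, which gives $F_A(t\bv)=\tfrac{\lambda_1}{2}t^2+\tfrac{\gamma}{2}t^2-\gamma t<0=F_A(0)$ for small $t$. Everything else is direct algebra.
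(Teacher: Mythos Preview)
Your proposal is correct and follows essentially the same route as the paper: both set $\nabla F_A(\bx)=0$ to identify nonzero critical points as eigenvectors with $\|\bx\|=\gamma/(\gamma+\lambda)$, evaluate $F_A$ at such points to get $-\gamma^2/(2(\gamma+\lambda))$, and then minimize over the eigenvalues. You are simply more explicit than the paper in two places the paper dismisses as ``easy to see'': the coercivity argument for existence of a minimizer and the exclusion of $\bx=0$ (the paper defers the latter to the proof of the next theorem, where it computes the directional derivative $\lim_{t\to 0^+}(F_A(t\theta)-F_A(0))/(t\|\theta\|)=-\gamma<0$, which is essentially your descent-direction argument).
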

\begin{proof}
As was noted above, for $\bx_0\neq0$,
$$
\nabla F_A(\bx_0)= 0\quad \Leftrightarrow\quad A\bx_0 = \gamma\Big(\frac{1}{\|\bx_0\|}-1\Big)\bx_0,
$$
which implies that $\bx_0$ is a nonzero critical point of $F_A$ if and only if $\bx_0$ is an eigenvector of $A$ corresponding to the eigenvalue
$\lambda_0 = \gamma\Big(\frac{1}{\|\bx_0\|}-1\Big)$ with $\|\bx_0\| = \frac{\gamma}{\gamma + \lambda_0}$.
In addition,
$$
F_A(\bx_0) = -\frac{\gamma}{2}\|\bx_0\| = -\frac{\gamma^2}{2(\gamma + \lambda_0)}.
$$
Therefore, the set of nonzero critical points of $F_A$ is
$$
\Big\{ \bx\in \R^N : A\bx=\lambda \bx \text{ for some }\lambda\in\R \text{ with }\|\bx\|= \frac{\gamma}{\gamma+ \lambda}\Big\}.
$$

Due to the choice of $\gamma > \max(0,-\lambda_1)$, it is easy to see that $F_A$ is bounded from below and that a global minimizer $\bx_*$ of $F_A$ exists and is an eigenvector of $A$ corresponding to an eigenvalue $\lambda_*$ with $\|\bx_*\|=\frac{\gamma}{\gamma+ \lambda_*}$.  Since $$F_A(\bx_*) =  -\frac{\gamma^2}{2(\gamma + \lambda_*)} = \min_{\bx\in\R^N}F_A(\bx),$$
we can easily see that  $\lambda_*=\lambda_1$ and
$$
\min_{\bx\in\R^N} F_A(\bx) = -\frac{\gamma^2}{2(\gamma + \lambda_1)}.
$$\qed
\end{proof}

When dealing with $F_A$, we will assume $\gamma > \max(0,-\lambda_1)$, where $\lambda_1$ is the smallest eigenvalue of $A$ if no condition on $\gamma$ is stated. 

\begin{theorem}\label{eigthm:1}
Any local minimizer $\bx_*$ of $F_A$ is a global minimizer.
\end{theorem}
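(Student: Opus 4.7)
\medskip

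The plan is to combine Lemma~\ref{eiglem:1} with a perturbation argument along an eigenvector of the smallest eigenvalue. Since any local minimizer is a critical point, I would first dispose of the origin and then show that the only nonzero critical points that pass the second‑order test are those associated with $\lambda_1$.

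\smallskip

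Step one: rule out $\bx_*=0$. Evaluating $F_A(t\bv)$ for a unit vector $\bv$ gives $F_A(t\bv)=\tfrac{t^2}{2}(\langle \bv,A\bv\rangle+\gamma)-\gamma|t|$, which is strictly negative for all sufficiently small $|t|>0$ because the linear term $-\gamma|t|$ dominates. Hence $F_A(0)=0$ is not a local minimum.

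\smallskip

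Step two: analyze nonzero critical points. By Lemma~\ref{eiglem:1}, such a $\bx_*$ satisfies $A\bx_*=\lambda \bx_*$ with $\|\bx_*\|=\gamma/(\gamma+\lambda)$ and $F_A(\bx_*)=-\gamma^2/(2(\gamma+\lambda))$. Since the global minimum equals $-\gamma^2/(2(\gamma+\lambda_1))$, it suffices to show $\lambda=\lambda_1$. Suppose for contradiction that $\lambda>\lambda_1$, and pick a unit eigenvector $\bv_1$ for $\lambda_1$. Because $A$ is symmetric and $\lambda\neq\lambda_1$, the eigenspaces are orthogonal, so $\langle \bx_*,\bv_1\rangle=0$.

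\smallskip

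Step three: perturb along $\bv_1$ and compute. Set $\bx(t)=\bx_*+t\bv_1$. Orthogonality gives
\begin{equation*}
\|\bx(t)\|^2=\|\bx_*\|^2+t^2,\qquad \langle \bx(t),A\bx(t)\rangle=\lambda\|\bx_*\|^2+\lambda_1 t^2,
\end{equation*}
so
\begin{equation*}
F_A(\bx(t))=\tfrac{1}{2}\bigl(\lambda\|\bx_*\|^2+\lambda_1 t^2\bigr)+\tfrac{\gamma}{2}\bigl(\|\bx_*\|^2+t^2\bigr)-\gamma\sqrt{\|\bx_*\|^2+t^2}.
\end{equation*}
Differentiating twice and using $\|\bx_*\|=\gamma/(\gamma+\lambda)$, a direct calculation yields
\begin{equation*}
\left.\frac{d^2}{dt^2}F_A(\bx(t))\right|_{t=0}=\lambda_1+\gamma-\frac{\gamma}{\|\bx_*\|}=\lambda_1-\lambda<0,
\end{equation*}
contradicting local minimality of $\bx_*$. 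Hence $\lambda=\lambda_1$, and $\bx_*$ attains the global minimum.

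\smallskip

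The only delicate point is the second‑derivative computation; everything else is a packaging step. The argument is clean because orthogonality removes cross terms in both the quadratic form and the Euclidean norm, letting the critical‑point relation $\|\bx_*\|=\gamma/(\gamma+\lambda)$ collapse the second derivative to the difference $\lambda_1-\lambda$.
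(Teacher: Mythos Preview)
Your proof is correct and follows essentially the same strategy as the paper: rule out the origin, invoke Lemma~\ref{eiglem:1} to identify nonzero critical points as eigenvectors, and then show that a critical point with $\lambda>\lambda_1$ fails the second-order necessary condition by perturbing in the direction of a $\lambda_1$-eigenvector. The only cosmetic difference is that the paper restricts $F_A$ to the two-dimensional span of $\be_1$ and $\be_j$ and shows the $2\times2$ Hessian has negative determinant $(\lambda_1-\lambda_j)(\gamma+\lambda_j)$, whereas you compute the one-dimensional second derivative along $\bv_1$ directly and obtain $\lambda_1-\lambda$; your version is slightly more economical since it exhibits the negative-curvature direction explicitly rather than inferring it from the determinant sign.
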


\begin{proof}
First of all, $0$ is not a local minimizer of $F_A$ because for any nonzero $\theta\in\R^N$, we have 
$$
\lim_{t\rightarrow 0^+}\frac{F_A(t\theta)-F_A(0)}{t\|\theta\|} = -\gamma < 0.
$$
Suppose that $\bx_*$ is a local minimizer of $F_A$. Lemma~\ref{eiglem:1} says that $\bx_*$ is an eigenvector of $A$ corresponding to an eigenvalue $\lambda_*$ with $\|\bx_*\|=\frac{\gamma}{\gamma+\lambda_*}$ and that
$$
F_A(\bx_*) = -\frac{\gamma^2}{2(\gamma + \lambda_*)}\geq -\frac{\gamma^2}{2(\gamma + \lambda_1)} = \min_{\bx\in\R} F_A(\bx),
$$
where $\lambda_1$ is the smallest eigenvalue of $A$. We may diagonalize $A$ such that $A = Q\Lambda Q^T$, where $\Lambda$ is a diagonal matrix with nondecreasing diagonal entries $\lambda_1 \leq \cdots \leq \lambda_N$ and $Q$ is an orthogonal matrix having $\frac{\bx_*}{\|\bx_*\|}$ as the $j^{th}$ column for some $j$ implying $\lambda_* = \lambda_j$.  Then, it suffices to show that $\lambda_j = \lambda_1$, which implies that $\bx_*$ is a global minimizer.

Suppose that $\lambda_j>\lambda_1$. With $\by=Q^T\bx$, we have
\begin{equation}\label{equiv:diagonal}
F_A(\bx) =  \ F_A(Q\by) = F_\Lambda(\by).
\end{equation}
For $k=1,2,\dots, N$, we set ${\bf e}_k$ to be the $k^{th}$ column of the identity matrix $I\in {\bf M}_N(\R).$ Then, we can see that $\bx_*$ being a local minimizer of $F_A$ is equivalent to $\frac{\gamma}{\gamma+\lambda_j}{\bf e}_j$ being a local minimizer of $F_\Lambda$ and that
\begin{equation}\label{equiv:diagonalF}
F_\Lambda(\by) = \frac{1}{2} \langle \by, \Lambda \by\rangle +\frac{\gamma}{2}\|\by\|^2 - \gamma\|\by\| = \Big(\frac{1}{2}\sum_{k=1}^N \Big(\lambda_k + \gamma\Big)y_k^2\Big) -\gamma\|\by\|,
\end{equation}
where $\by = [y_1\ \cdots\ y_N]^T$. Moreover, $\frac{\gamma}{\gamma + \lambda_1}{\bf e}_1$ is a global minimizer of $F_\Lambda$. Since $\lambda_j>\lambda_1$, $\be_j$ is orthogonal to $\be_1$ and  we can consider $F_\Lambda$ on the subspace spanned by $\{{\bf e}_1, {\bf e}_j\}$ by defining $H:\R^2\rightarrow \R$ by
$$
H(a,b) = F_\Lambda(a{\bf e}_1 + b{\bf e}_j) = \frac{1}{2}\Big(\lambda_1 + \gamma\Big)a^2 + \frac{1}{2}\Big(\lambda_j + \gamma\Big)b^2-\gamma\sqrt{a^2 + b^2}.
$$
Then, $\nabla^2H(0,\frac{\gamma}{\gamma + \lambda_j})$ exists and must be positive semidefinite, i.e., $$\dett\Big[\nabla^2H\Big(0,\frac{\gamma}{\gamma + \lambda_j}\Big)\Big]\geq0.$$ However, at $(a,b) = (0,\frac{\gamma}{\gamma+\lambda_1})$, we obtain that
\begin{align*}
\dett(\nabla^2H(a,b)) =&\ \dett\left[\begin{array}{cc}(\gamma+\lambda_1) - \frac{\gamma b^2}{\sqrt{a^2 + b^2}^3} & \frac{\gamma ab}{\sqrt{a^2 + b^2}^3}\\ \frac{\gamma ab}{\sqrt{a^2 + b^2}^3}&(\gamma+\lambda_j) - \frac{\gamma a^2}{\sqrt{a^2 + b^2}^3}\end{array}\right]\\
 =&\ (\lambda_1- \lambda_j)(\gamma+\lambda_j)<0,
\end{align*}
which is a contradiction. Therefore, $\lambda_*=\lambda_j = \lambda_1$, i.e., any local minimizer $\bx_*$ of $F_A$ is a global minimizer. \qed
\end{proof}

In addition, we may be able to find all the eigenvalues and their corresponding eigenvectors of $A$.

\begin{corollary}\label{eigthm:2}
Let $\lambda_1\leq \cdots\leq \lambda_N$ be the eigenvalues of $A$. Let $\{\bx_1, \dots, \bx_k\}$ be an orthonormal set of eigenvectors of $A$ corresponding to the eigenvalues $\lambda_1,\dots, \lambda_k$. We consider the following problem
\begin{equation}\label{eigeq:functionalsub}
\min_{\bx\in\R^N} F_A(\bx)\ \text{ subject to }\ \langle \bx,{\bf x}_i\rangle = 0,\ i=1,2,\dots,k.
\end{equation}
Then, any local minimizer $\bx_*$ of  \eqref{eigeq:functionalsub} is a global minimizer corresponding to the eigenvalue $\lambda_{k+1}$ with $\|\bx_*\|=\frac{\gamma}{\gamma+\lambda_{k+1}}$.
\end{corollary}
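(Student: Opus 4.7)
The plan is to reduce the constrained problem to an unconstrained problem on a lower-dimensional space and invoke Theorem~\ref{eigthm:1}. The key observation is that the feasible set $V = \{\bx_1,\dots,\bx_k\}^\perp$ is a linear subspace of $\R^N$ that is $A$-invariant: for any $\bx \in V$, the symmetry of $A$ gives $\langle A\bx, \bx_i\rangle = \langle \bx, A\bx_i\rangle = \lambda_i \langle \bx, \bx_i\rangle = 0$ for every $i = 1,\dots,k$, so $A\bx \in V$. Hence $A$ restricts to a symmetric operator $A|_V$ on the $(N-k)$-dimensional space $V$, whose eigenvalues are exactly $\lambda_{k+1} \leq \cdots \leq \lambda_N$.

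First I would pick an orthonormal basis $\{\bx_{k+1},\dots,\bx_N\}$ of $V$ consisting of eigenvectors of $A|_V$ corresponding to $\lambda_{k+1},\dots,\lambda_N$, together with $\{\bx_1,\dots,\bx_k\}$ forming an orthonormal eigenbasis of the full space. For $\bx = \sum_{i=k+1}^N y_i \bx_i \in V$ with coordinate vector $\by = (y_{k+1},\dots,y_N)^T \in \R^{N-k}$, orthonormality and invariance yield $\langle \bx, A\bx\rangle = \sum_{i=k+1}^N \lambda_i y_i^2$ and $\|\bx\| = \|\by\|$, so that
$$
F_A(\bx) \;=\; \tfrac{1}{2}\sum_{i=k+1}^N \lambda_i y_i^2 + \tfrac{\gamma}{2}\|\by\|^2 - \gamma\|\by\| \;=\; F_{\tilde\Lambda}(\by),
$$
where $\tilde\Lambda = \diag(\lambda_{k+1},\dots,\lambda_N) \in \symm_{N-k}(\R)$. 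This identifies \eqref{eigeq:functionalsub} with the unconstrained problem $\min_{\by\in\R^{N-k}} F_{\tilde\Lambda}(\by)$ via the linear isometry $\by \mapsto \sum_{i=k+1}^N y_i \bx_i$, and this isometry sends (local) minimizers of the constrained problem bijectively to (local) minimizers of the reduced unconstrained problem.

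Next I would verify that the hypothesis on $\gamma$ transfers: since $\lambda_1 \leq \lambda_{k+1}$, the standing assumption $\gamma > \max(0,-\lambda_1)$ already implies $\gamma > \max(0, -\lambda_{k+1})$, so Lemma~\ref{eiglem:1} and Theorem~\ref{eigthm:1} apply to $F_{\tilde\Lambda}$. Theorem~\ref{eigthm:1} then forces any local minimizer $\by_*$ of $F_{\tilde\Lambda}$ to be a global minimizer, which by Lemma~\ref{eiglem:1} corresponds to the smallest eigenvalue of $\tilde\Lambda$, namely $\lambda_{k+1}$, with $\|\by_*\| = \frac{\gamma}{\gamma + \lambda_{k+1}}$. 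Pulling this back along the isometry yields the claim: the pre-image $\bx_* = \sum_{i=k+1}^N (y_*)_i \bx_i$ satisfies $A\bx_* = \lambda_{k+1}\bx_*$ and $\|\bx_*\| = \frac{\gamma}{\gamma+\lambda_{k+1}}$.

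I do not expect a serious obstacle here: the entire content of the corollary lies in recognizing that the orthogonality constraints carve out an $A$-invariant subspace on which $F_A$ has the same form as in the unconstrained case. The only technicality worth stating carefully is that local minimality on $V$ (with respect to perturbations within $V$) coincides with local minimality of the constrained problem in $\R^N$, which is immediate because $V$ is a linear subspace containing a full neighborhood of $\bx_*$ relative to the feasible set.
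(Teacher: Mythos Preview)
Your proposal is correct and follows essentially the same approach as the paper: extend $\{\bx_1,\dots,\bx_k\}$ to an orthonormal eigenbasis, change coordinates so that the constrained problem becomes $\min_{\bz\in\R^{N-k}} F_{\Lambda_k}(\bz)$ with $\Lambda_k=\diag(\lambda_{k+1},\dots,\lambda_N)$, and apply Theorem~\ref{eigthm:1}. You are a bit more explicit than the paper in checking $A$-invariance of $V$ and in noting that the standing assumption on $\gamma$ transfers to the reduced problem, but the argument is the same.
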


\begin{proof}
With a diagonalization $Q\Lambda Q^T$ of $A$, where  $\Lambda=\operatorname{diag}(\lambda_1,\lambda_2,\dots,\lambda_N)$ and the first $k$ columns of $Q$ are ${\bf x}_1,\dots,{\bf x}_k$, and $\by = Q^T\bx$, we have that $F_A(\bx) = F_\Lambda(\by)$ and $\|\bx\| = \|\by\|$, so \eqref{eigeq:functionalsub} is equivalent to
\begin{equation}\label{eigeq:small}
\min \{F_\Lambda(\by) : \by = [0\ \cdots\ 0\ y_{k+1}\ \cdots\ y_N]^T\in \R^N\}.
\end{equation}
Let $\Lambda_k$ be the last $(n-k)\times (n-k)$ block of $\Lambda$, i.e., $\Lambda_k=\operatorname{diag}(\lambda_{k+1},\dots,\lambda_N)$. Then, \eqref{eigeq:small} is equivalent to
$$
\min_{\bz\in\R^{N-k}}F_{\Lambda_k}(\bz).
$$
Theorem~\ref{eigthm:1} applies to $F_{\Lambda_k}$ and we are done.\qed
\end{proof}


Even though $F_A$ is not convex, we have seen that all local minimizers of $F_A$ are global minimizers, which is a rare case for nonconvex functionals, and that nonzero critical points of $F_A$ are eigenvectors of $A$. Hence, one can expect that any algorithm either to minimize the functional $F_A$ or to find a critical point of $F_A$ will work. For example, if we set $G(\bx) = \|\bx\|$, then the conjugate functional $G^*$ is
$$
G^*(\by) = \sup_{\bx\in\R^N}\langle \bx,\by\rangle -G(\bx) = \begin{cases}0, & \text{ if } \|\by\|\leq 1,\\ \infty, & \text{ if } \|\by\|>1,\end{cases}
$$
and $G(\bx) = G^{**}(\bx) = \sup_{\by\in\R^N}\langle \bx,\by\rangle -G^*(\by) =  \sup_{\|\by\|\leq1}\langle \bx,\by\rangle$. Then, \eqref{eigpb} becomes
$$
\min_{\substack{\bx\in\R^N,\\ \|\by\|\leq 1}} \frac{1}{2}\langle \bx,A\bx\rangle +\frac{\gamma}{2}\|\bx\|^2 - \gamma\langle \bx,\by\rangle.
$$
In fact, the constraint on $\by$ is $\|\by\|=1$, so we have
\begin{equation}\label{eigpb:constrained}
\min_{\substack{\bx\in\R^N,\\ \|\by\| = 1}} F_A(\bx,\by) =\frac{1}{2}\langle \bx,A\bx\rangle +\frac{\gamma}{2}\|\bx\|^2 - \gamma\langle \bx,\by\rangle.
\end{equation}
Since the functional $F_A(\cdot, \by)$ is convex and quadratic in $\bx$ for any fixed $\by$, we may consider an algorithm such as
\begin{enumerate}
\item $\tilde{\bx}=\argmin_{\bx\in\R^N} F_A(\bx,\by)$,
\item Update $\by$ to be $\frac{\tilde{\bx}}{\|\tilde{\bx}\|}$.
\item Iterate the above procedure until it converges.
\end{enumerate}
This algorithm is exactly the inverse power method
$$
\Big(\frac{1}{\gamma}A + I\Big)\bx_{k+1} = \frac{\bx_k}{\|\bx_k\|}.
$$
However, \eqref{eigpb:constrained} is a constrained problem and the above algorithm requires solving a system of linear equation at every iteration, not to mention, the rate of convergence is linear. Therefore, we want to consider an algorithm that satisfies either one of the following two:
\begin{itemize}
\item the rate of convergence is linear, yet the algorithm applies only matrix addition and multiplication,
\item the rate of convergence is faster than linear if we need to solve systems of linear equations.
\end{itemize}

\subsection{The gradient descent method}\label{subsec:1}

As for the first algorithm, we will analyze the gradient descent method for the minimization problem \eqref{eigpb} with stepsize $\alpha_k>0$: with $\bx_0\neq 0$,
\begin{equation}\label{min_seq}
\bx_{k+1} = \bx_k - \alpha_k\nabla F_A(\bx_k) = \bx_k - \alpha_k\Big(A\bx_k + \gamma \bx_k - \frac{\gamma}{\|\bx_k\|}\bx_k\Big).
\end{equation}

Let $\{q_1,\dots,q_N\}$ be an orthonormal basis for $\R^N$ consisting of unit eigenvectors of $A$ corresponding to the eigenvalues $\lambda_1\leq \cdots\leq \lambda_N$, respectively. If $\bx_k = \mu_{k,1}q_1 + \cdots + \mu_{k,N} q_N$, then
\begin{align}\label{min_seq1}
\bx_{k+1} =&\ \mu_{k+1,1}q_1 + \cdots + \mu_{k+1,N} q_N = \sum_{i=1}^N\mu_{k,i}\Big[1-\alpha_k\Big(\lambda_i + \gamma -\frac{\gamma}{\|\bx_k\|}\Big)\Big]q_i\nonumber\\
=&\ \sum_{i=1}^N\mu_{0,i}\Pi_{j=0}^k\Big[1-\alpha_j\Big(\lambda_i + \gamma -\frac{\gamma}{\|x_j\|}\Big)\Big]q_i.
\end{align}
For simplicity, we assume a fixed stepsize $0< \alpha_k = \alpha < \frac{1}{\lambda_N+\gamma}$ for all $k\in \N$. Then, \eqref{min_seq} always converges to a critical point of $F_A$. In fact, it converges to a global minimizer with probability 1 if an initial point $\bx_0\neq 0$ is chosen randomly. Theorem~\ref{thm:convergence} below is given in a general form.

\begin{theorem}\label{thm:convergence}
A sequence $\{\bx_k\}$ generated by \eqref{min_seq} with $\bx_0 = \mu_{0,1}q_1 + \cdots + \mu_{0,N}q_N \neq 0$ converges to a critical point $\bx_*$ of $F_A$ which is an eigenvector of $A$ corresponding the eigenvalue $\lambda_l$ with 
$$
\|\bx_*\| = \frac{\gamma}{\gamma + \lambda_l},
$$
where $l = \min\{j\in\{1,\dots,N\} : \mu_{0,j}\neq0\}$. More precisely, $\bx_*$ is
$$
\Big(\frac{\gamma}{\gamma+\lambda_l}\Big)\Big(\frac{1}{\sqrt{\sum_{m: \lambda_m=\lambda_l} \mu_{0,m}^2}}\Big)\sum_{m: \lambda_m=\lambda_l} \mu_{0,m}q_m.
$$
\end{theorem}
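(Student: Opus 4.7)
The plan is to exploit the coordinate-wise decoupling already visible in \eqref{min_seq1}. Writing $c_j(i):=1-\alpha(\lambda_i+\gamma-\gamma/\|\bx_j\|)$ so that $\mu_{k,i}=\mu_{0,i}\prod_{j<k}c_j(i)$, the stepsize hypothesis $\alpha<1/(\lambda_N+\gamma)$ combined with $\|\bx_j\|>0$ forces every factor $c_j(i)=(1-\alpha(\lambda_i+\gamma))+\alpha\gamma/\|\bx_j\|$ to be strictly positive. Hence each coordinate preserves the sign of $\mu_{0,i}$; in particular $\mu_{k,i}\equiv 0$ whenever $i<l$, collapsing the problem onto the block spanned by $\{q_l,\dots,q_N\}$.

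The heart of the argument is a pair of uniform bounds on $\|\bx_k\|$. For the upper bound I would rearrange the recursion as $\bx_{k+1}=(I-\alpha(A+\gamma I))\bx_k+\alpha\gamma\,\bx_k/\|\bx_k\|$; since $\|I-\alpha(A+\gamma I)\|_{op}=1-\alpha(\lambda_1+\gamma)<1$, a one-line contraction argument gives $\|\bx_k\|\leq\max(\|\bx_0\|,\,\gamma/(\gamma+\lambda_1))$. For the lower bound, pairing the same identity with the unit vector $\bx_k/\|\bx_k\|$ and using the Rayleigh-quotient bound $\langle\bx_k,(A+\gamma I)\bx_k\rangle/\|\bx_k\|^2\leq\lambda_N+\gamma<1/\alpha$ produces $\langle\bx_{k+1},\bx_k/\|\bx_k\|\rangle\geq\alpha\gamma$, and therefore $\|\bx_k\|\geq\alpha\gamma$ for every $k\geq 1$.

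With $\|\bx_k\|$ sandwiched in a compact subinterval of $(0,\infty)$, each $c_j(l)$ lies in some $[c_{\min},c_{\max}]\subset(0,\infty)$. The key algebraic identity $c_j(i)-c_j(l)=-\alpha(\lambda_i-\lambda_l)$, independent of $j$, then yields for every $\lambda_i>\lambda_l$ the strict bound $c_j(i)/c_j(l)\leq 1-\alpha(\lambda_i-\lambda_l)/c_{\max}=:\rho_i<1$. Consequently the cross-eigenspace ratio $\mu_{k,i}/\mu_{k,l}$ decays at least like $\rho_i^k$; combined with $|\mu_{k,l}|\leq\|\bx_k\|$ being bounded, this forces $\mu_{k,i}\to 0$ for every such $i$. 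Within the $\lambda_l$-eigenspace the factors coincide, $c_j(i)=c_j(l)$, so $\mu_{k,i}/\mu_{k,l}=\mu_{0,i}/\mu_{0,l}$ is constant and the direction of $\bx_k$ aligns with $v:=\sum_{i:\lambda_i=\lambda_l}\mu_{0,i}q_i$.

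To upgrade directional convergence to the explicit formula in the statement, set $P_k:=\prod_{j<k}c_j(l)>0$ so that $\bx_k=P_k v+\epsilon_k$ with $\|\epsilon_k\|\to 0$ geometrically and (forced by $\|\bx_k\|\geq\alpha\gamma$) $P_k$ bounded below for large $k$. The scalar recursion $P_{k+1}=c_k(l)P_k$ then reads
$$
P_{k+1}=(1-\alpha(\lambda_l+\gamma))P_k+\frac{\alpha\gamma}{\sqrt{\|v\|^2+\|\epsilon_k\|^2/P_k^2}},
$$
an asymptotically affine map with contraction rate $1-\alpha(\lambda_l+\gamma)\in(0,1)$ and limiting fixed point $P^*=\gamma/(\|v\|(\gamma+\lambda_l))$; a standard perturbation argument then gives $P_k\to P^*$ and hence $\bx_k\to P^*v$, which is precisely the eigenvector claimed. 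The main obstacle I foresee is securing the uniform lower bound $\|\bx_k\|\geq\alpha\gamma$: without it the factors $c_j(i)$ could grow without bound, no uniform rate $\rho_i<1$ is available, and the scalar recurrence loses its contraction. The inner-product identity with $\bx_k/\|\bx_k\|$ is the crucial device preventing the trajectory from approaching the singularity of $F_A$ at the origin.
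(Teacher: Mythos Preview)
Your argument is correct and is in fact cleaner than the paper's, though the two share the same crucial first step. Both establish the uniform lower bound $\|\bx_k\|\geq\alpha\gamma$ by projecting the update onto $\bx_k/\|\bx_k\|$ and invoking $\alpha(\lambda_N+\gamma)<1$; this is exactly the device you single out as the main obstacle.

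From there the routes diverge. The paper proceeds by an energy method: it shows the whole segment $[\bx_k,\bx_{k+1}]$ stays in $\{\|\bx\|\geq\alpha\gamma\}$, bounds $\|\nabla^2F_A\|_{op}\leq 1/\alpha$ on that set, and obtains the standard descent inequality $F_A(\bx_{k+1})\leq F_A(\bx_k)-\frac{\alpha}{2}\|\nabla F_A(\bx_k)\|^2$. Summability of $\|\nabla F_A(\bx_k)\|^2$ plus coercivity then forces every subsequential limit to be a critical point, and monotonicity of $F_A(\bx_k)$ pins all such limits to a single eigenvalue $\lambda_i$; only then does the paper return to the product formula \eqref{min_seq1} to identify $\lambda_i=\lambda_l$ and compute the limit. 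You bypass all second-order information and subsequence extraction: once $\|\bx_k\|$ is trapped in a compact interval, the identity $c_j(i)/c_j(l)=1-\alpha(\lambda_i-\lambda_l)/c_j(l)$ immediately gives a uniform ratio $\rho_i<1$, hence geometric directional convergence, and the scalar recursion for $P_k$ is an asymptotically affine contraction whose fixed point is read off directly.

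Your approach is more elementary and yields an explicit geometric rate for the cross-eigenspace components. The paper's descent-lemma machinery, on the other hand, is what it reuses for the generalized problem $F_{A,B}$ (Theorem~\ref{thm:gdgeigpb}), where the iteration no longer decouples coordinate-wise and your ratio argument would not apply directly.
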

\begin{proof}
For any $\bx\neq 0$, we get
\begin{equation}\label{nablaFnorm}
\Big\langle \nabla F_A(\bx),\frac{\bx}{\|\bx\|}\Big\rangle = \Big(\frac{\langle A\bx,\bx\rangle}{\|\bx\|^2} + \gamma\Big)\|\bx\| - \gamma = (\lambda+\gamma)\|\bx\|-\gamma,
\end{equation}
where $\lambda_1\leq \lambda \leq \lambda_N.$ Then, $\alpha < \frac{1}{\lambda_N+\gamma}$ implies $\|\bx_k\|> \alpha\gamma$ for all $k\in\N$ since
$$
\|\bx_{k+1}\|^2 \geq \Big|\Big\langle \bx_k - \alpha\nabla F_A(\bx_k),\frac{\bx_k}{\|\bx_k\|}\Big\rangle\Big|^2\geq \bigl(\|\bx_k\|-\alpha(\lambda+\gamma)\|\bx_k\|+\alpha\gamma\bigr)^2 > (\alpha\gamma)^2.
$$
Moreover, the line segment connecting $\bx_k$ and $\bx_{k+1}$ for $k\in\N$ lies entirely in $\{\bx \in \R^N : \|\bx\| \geq \alpha\gamma\}$. To see this, we take $0<t<1$ and observe that for $k\in\N$,
\begin{align*}
\|\bx_k-t\alpha\nabla F(\bx_k)\|^2 \geq&\ \bigl(\|\bx_k\|-t\alpha(\lambda+\gamma)\|\bx_k\|+t\alpha\gamma\bigr)^2\\
>&\ (\alpha\gamma)^2(1-t\alpha(\lambda+\gamma) + t)^2 > (\alpha\gamma)^2.
\end{align*}
Noting that
$$
\nabla^2F_A(\bx) = (A+\gamma I) -\frac{\gamma}{\|\bx\|}\Big(I-\frac{\bx\bx^T}{\|\bx\|^2}\Big)
$$
and $A+\gamma I,\ I-\frac{\bx\bx^T}{\|\bx\|^2}$ are positive semidefinite with $\|A+\gamma I\|_{op} \leq \lambda_N+\gamma$ and $\|I-\frac{\bx\bx^T}{\|\bx\|^2}\|_{op}\leq1$, we can see that  for $\|\bx\|\geq \alpha\gamma$, $$\|\nabla^2F_A(\bx)\|_{op} \leq \max(\lambda_N + \gamma, \frac{1}{\alpha}) \leq \frac{1}{\alpha},$$ 
from which we obtain that for each $k\geq 1$, 
\begin{align*}
F_A(\bx_{k+1}) \leq &\ F_A(\bx_k) + \langle \nabla F_A(\bx_k), \bx_{k+1}-\bx_k\rangle + \frac{1}{2\alpha}\|\bx_{k+1}-\bx_k\|^2\\
=&\ F_A(\bx_k) -\frac{\alpha}{2}\|\nabla F_A(\bx_k)\|^2,
\end{align*}
which implies
$$
\|\nabla F_A(\bx_k)\|^2 \leq \frac{2}{\alpha}(F_A(\bx_k) - F_A(\bx_{k+1})).
$$
Note that for any $K\geq 1$,
\begin{equation}\label{eig:convergence}
\frac{1}{\alpha^2}\sum_{k=1}^K\|\bx_{k+1}-\bx_k\|^2 = \sum_{k=1}^K\|\nabla F_A(\bx_k)\|^2 \leq \frac{2}{\alpha}(F_A(\bx_1) - (\min_{\bx\in\R^N}F_A(\bx))).
\end{equation}
Since $F_A$ is coercive and $F_A(\bx_k) \leq F_A(\bx_1)<\infty$ for all $k\geq1$, $\{\bx_k\}$ must be a bounded sequence in $$\{\bx \in \R^N : \|\bx\| \geq \alpha\gamma\}.$$ 

Choosing a convergent subsequence $\{\bx_{k_n}\}$ to $\bx_*$, we know from \eqref{eig:convergence}  that $\nabla F_A(\bx_*) = 0$, i.e., $\bx_*$ is an eigenvector of $A$ corresponding to an eigenvalue $\lambda_i$ for some $1\leq i\leq N$ with norm
$
\|\bx_*\| = \frac{\gamma}{\gamma + \lambda_i}.
$
Knowing that $\{F_A(\bx_k)\}$ is a decreasing and bounded sequence, we can easily derive that any subsequential limit $\tilde{\bx}$ of $\{\bx_k\}$ satisfies $F_A(\tilde{\bx}) = F_A(\bx_*)$ and $A\tilde{\bx} = \lambda_i\tilde{\bx}$ with norm
$
\|\tilde{\bx}\| = \frac{\gamma}{\gamma + \lambda_i}.
$
Hence, we can conclude that
$$
\lim_{k\rightarrow\infty}\|\bx_k\| =\frac{\gamma}{\gamma+\lambda_i}.
$$
Setting $l = \min\{j\in \{1,\dots,N\} : \mu_{0,j}\neq 0\}$, we can see from \eqref{min_seq1} that $\lambda_i\geq \lambda_l$. Suppose $\lambda_i>\lambda_l$. 
We note that for all $k\in\N$,
$$
1-\alpha\Big(\lambda_l +\gamma -\frac{\gamma}{\|\bx_k\|}\Big)  >1-\frac{\lambda_l+\gamma}{\lambda_N+\gamma}+\frac{\alpha\gamma}{\|\bx_k\|} \geq\frac{\alpha\gamma}{\|\bx_k\|}>0
$$
and that as $k\rightarrow\infty$,
$$
1-\alpha\Big(\lambda_l +\gamma -\frac{\gamma}{\|\bx_k\|}\Big)\rightarrow 1-\alpha(\lambda_l-\lambda_i)>1.
$$
From \eqref{min_seq1}, we can see that $$\mu_{0,l}\Pi_{j=0}^k\Big[1-\alpha\Big(\lambda_l + \gamma -\frac{\gamma}{\|\bx_j\|}\Big)\Big]\rightarrow \infty\ \text{ as }\ k\rightarrow\infty.$$ This is a contradiction because $\{\bx_k\}$ is a bounded sequence. Therefore, $\lambda_i=\lambda_l$.

Moreover, for $\lambda_p>\lambda_l$, we have
$$
1-\alpha\Big(\lambda_p +\gamma -\frac{\gamma}{\|\bx_k\|}\Big)\rightarrow 1-\alpha(\lambda_p-\lambda_l)\in(0,1) \ \text{ as }\ k\rightarrow\infty,
$$
implying
$$
\mu_{0,p}\Pi_{j=0}^k\Big[1-\alpha\Big(\lambda_p+ \gamma -\frac{\gamma}{\|\bx_j\|}\Big)\Big]\rightarrow 0\ \text{ as }\ k\rightarrow\infty.
$$

Hence, referring to \eqref{min_seq1}, we can see that
$$
\bx_{k+1} - \Big(\sum_{m: \lambda_m=\lambda_l}\mu_{0,m}q_m\Big)\Pi_{j=0}^k\Big[1-\alpha\Big(\lambda_l + \gamma -\frac{\gamma}{\|\bx_j\|}\Big)\Big] \rightarrow 0\ \text{ as }\ k\rightarrow\infty.
$$
In addition, convergence of the norm $\|\bx_k\|$ implies that
$$
\Pi_{j=0}^\infty\Big[1-\alpha\Big(\lambda_l + \gamma -\frac{\gamma}{\|\bx_j\|}\Big)\Big] = \frac{\gamma}{\gamma+\lambda_l}\Big(\frac{1}{\sqrt{\sum_{m: \lambda_m=\lambda_l}\mu_{0,m}^2}}\Big).
$$
Therefore,
$\bx_k$ converges to
$$
\frac{\gamma}{\gamma+\lambda_l}\Big(\frac{1}{\sqrt{\sum_{m: \lambda_m=\lambda_l}\mu_{0,m}^2}}\Big)\Big(\sum_{m: \lambda_m=\lambda_l}\mu_{0,m}q_m\Big).
$$
\qed
\end{proof}

Now, going back to the generalized eigenvalue problem
\begin{equation}\label{gep}
A\bx = \lambda B\bx,
\end{equation} 
via minimizing $F_{A,B}$ in \eqref{geigpb}, we realize that even though \eqref{geigpb} is equivalent to \eqref{eigpb}, the gradient descent method we discussed above is applicable to $F_C$ with $C=\sqrt{B}^{-1}A\sqrt{B}^{-1}$ to find a critical point of $F_{A,B}$. That means that not only do we need to compute $\sqrt{B}$, but also we need to invert it. However, it turns out that applying the gradient descent method directly to $F_{A,B}$ to solve \eqref{geigpb} finds solutions of \eqref{gep}, as well. 

When solving \eqref{geigpb}, we will use $\mu_{(A,j)}, \mu_{(B,j)}$ to denote the $j^{th}$ smallest eigenvalues of $A$ and $B$, respectively. Moreover, we assume that $\mu_{(B,1)} = 1$ and that either $\mu_{(A,1)}\geq0$ or $-\mu_{(A,N)}\leq \mu_{(A,1)} <0$ is true. Note that these assumptions are not restrictions, but simplifications. In relation to \eqref{gep}, when dealing with $F_{A,B}$, the parameter $\gamma$ will be assumed to satisfy $\gamma >\max(0,-\mu_{(A,1)})$.

We set $\{r_1,\dots,r_N\}$ to be an orthonormal set of eigenvectors of $C$ corresponding to the eigenvalues $\lambda_1\leq \cdots\leq \lambda_N$. We also set $q_j = \sqrt{B}^{-1}r_j$ for $1\leq j\leq N$. Then, it is easy to see that $\{q_1,\dots,q_N\}$ is an orthonormal basis for $\R^N$ with respect to the inner product $\langle \bx,\by\rangle_B:= \bx^TB\by$. 

\begin{theorem}\label{thm:gdgeigpb}
If we choose
$$
0<\alpha < \frac{1}{(\mu_{(A,N)} + \gamma)(\mu_{(B,N)})^3},
$$
then with $\bx_0\neq 0$, the following procedure
\begin{equation}\label{gep:gdconv}
\bx_{k+1} = \bx_k -\alpha\nabla F_{A,B}(\bx_k),\ k=0,1,2,\dots,
\end{equation}
produces a sequence $\{\bx_k\}$ converging to $\bx_*$ with $\sqrt{\langle B\bx_*,\bx_*\rangle} =\frac{\gamma}{\gamma+\lambda_*}$, where $(\bx_*,\lambda_*)$ is a solution pair of \eqref{gep}.
\end{theorem}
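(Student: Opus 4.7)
The plan is to mirror the proof of Theorem~\ref{thm:convergence} in the $B$-weighted setting, working throughout with the norm $\|\bx\|_B := \sqrt{\langle B\bx,\bx\rangle}$. Three ingredients carry the argument: a uniform lower bound on $\|\bx_k\|_B$, an operator-norm bound on $\nabla^2 F_{A,B}$ along the iterates, and the standard quadratic descent lemma, all matched to the prescribed step-size condition.

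For the lower bound, I would imitate the derivation following~\eqref{nablaFnorm}. Cauchy--Schwarz in the $B$-inner product yields
$$
\|\bx_{k+1}\|_B \geq \frac{\langle \bx_{k+1}, \bx_k\rangle_B}{\|\bx_k\|_B} = \|\bx_k\|_B - \frac{\alpha}{\|\bx_k\|_B}\bx_k^T B\nabla F_{A,B}(\bx_k).
$$
Using the estimates $|\bx^T AB\bx| \leq \mu_{(A,N)}\mu_{(B,N)}\|\bx\|_B^2$ together with the sandwich $\|\bx\|_B^2 \leq \|B\bx\|^2 \leq \mu_{(B,N)}^2\|\bx\|_B^2$ (both consequences of $\mu_{(B,1)} = 1$), one reaches a recursion of the form $\|\bx_{k+1}\|_B \geq \rho\|\bx_k\|_B + \alpha\gamma$ with $\rho \in [0,1)$ under the step-size assumption, from which $\|\bx_k\|_B \geq \alpha\gamma$ follows uniformly in $k\geq 1$.

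Next, from the computation
$$
\nabla^2 F_{A,B}(\bx) = (A + \gamma B) - \frac{\gamma}{\|\bx\|_B}\Big(B - \frac{B\bx\,\bx^T B}{\|\bx\|_B^2}\Big),
$$
one finds $\|\nabla^2 F_{A,B}(\bx)\|_{op} \leq (\mu_{(A,N)}+\gamma)\mu_{(B,N)} + \gamma\mu_{(B,N)}/\|\bx\|_B$, the second bound following because the matrix in parentheses factors as $\sqrt{B}(I - P)\sqrt{B}$ with $P$ a rank-one orthogonal projection. The cubic factor $(\mu_{(B,N)})^3$ in the step-size denominator is calibrated so that $\alpha \|\nabla^2 F_{A,B}(\bx_k)\|_{op}$ is uniformly small enough to yield the descent inequality $F_{A,B}(\bx_{k+1}) \leq F_{A,B}(\bx_k) - (\alpha/2)\|\nabla F_{A,B}(\bx_k)\|^2$. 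Combined with the coercivity of $F_{A,B}$ (guaranteed by $\gamma > \max(0, -\mu_{(A,1)})$ and $B \succ 0$), this delivers boundedness of $\{\bx_k\}$ and $\nabla F_{A,B}(\bx_k) \to 0$. Applying Lemma~\ref{eiglem:1} to the transformed functional $F_C$ and translating back via $\bx = \sqrt{B}^{-1}\by$, every accumulation point is a nonzero critical point of $F_{A,B}$, hence a solution pair $(\bx_*, \lambda_*)$ of~\eqref{gep} with $\|\bx_*\|_B = \gamma/(\gamma+\lambda_*)$; monotone decrease of $F_{A,B}(\bx_k)$ then forces all such accumulation points to share the common value of $\lambda_*$ and hence of $\|\bx_*\|_B$.

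The principal obstacle is upgrading subsequential convergence to convergence of the full sequence. Unlike Theorem~\ref{thm:convergence}, the iteration does not decouple into scalar recursions in the generalized eigenbasis $\{q_1, \ldots, q_N\}$: since this basis is only $B$-orthonormal, writing $\bx_k = P\bv_k$ with $P^T B P = I$ produces the coupled linear recursion $\bv_{k+1} = (I - \alpha G D_k)\bv_k$ with $G = P^T B^2 P$ symmetric positive definite and $D_k = \diag(\lambda_i + \gamma - \gamma/\|\bv_k\|)$. I would exploit the fact that $\|\bv_k\| = \|\bx_k\|_B$ converges (from the previous paragraph), so that $D_k$ stabilizes to the diagonal matrix $D_\infty = \diag(\lambda_i - \lambda_*)$ which vanishes on the $\lambda_*$-eigenspace $S$ and is nonsingular on its complement. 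A spectral analysis of the limiting map $I - \alpha G D_\infty$, decomposing the dynamics relative to $S$ and $S^\perp$, then exhibits geometric contraction of the components complementary to $S$ while the component along $S$ stabilizes, pinning down the unique limit $\bx_*$.
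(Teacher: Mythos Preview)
Your overall plan matches the paper's, but there is a quantitative gap in the lower-bound step that breaks the descent argument. With only $\|\bx_k\|_B \geq \alpha\gamma$, your Hessian estimate gives
\[
\alpha\,\|\nabla^2 F_{A,B}(\bx_k)\|_{op} \;\leq\; \alpha(\mu_{(A,N)}+\gamma)\mu_{(B,N)} \;+\; \frac{\alpha\gamma\mu_{(B,N)}}{\alpha\gamma} \;=\; \alpha(\mu_{(A,N)}+\gamma)\mu_{(B,N)} + \mu_{(B,N)},
\]
and since $\mu_{(B,N)}>1$ in the nontrivial case this exceeds $1$, so the descent inequality $F_{A,B}(\bx_{k+1})\leq F_{A,B}(\bx_k)-\tfrac{\alpha}{2}\|\nabla F_{A,B}(\bx_k)\|^2$ does not follow. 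The paper fixes this by pushing your own recursion $\|\bx_{k+1}\|_B \geq \epsilon\,\|\bx_k\|_B + \alpha\gamma$ (with $\epsilon = 1 - \mu_{(B,N)}^{-2}$) one step further to obtain $\liminf_k \|\bx_k\|_B > \alpha\gamma\mu_{(B,N)}$, and then repeats the same Cauchy--Schwarz computation along the segment joining $\bx_k$ to $\bx_{k+1}$ to show that the entire segment eventually stays in $\{\bx : \|\bx\|_B > \alpha\gamma\mu_{(B,N)}\}$. On that set, using that $\nabla^2 F_{A,B}$ is a \emph{difference} of two positive semidefinite matrices (so its operator norm is bounded by the maximum, not the sum, of the two pieces), one gets $\|\nabla^2 F_{A,B}(\bx)\|_{op}\leq 1/\alpha$; this is precisely where the factor $(\mu_{(B,N)})^3$ is consumed. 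Your triangle-inequality bound on the Hessian is too crude here even with the sharper lower bound.

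On the last paragraph: you are right that the scalar decoupling of Theorem~\ref{thm:convergence} is unavailable, and the paper simply declares the remainder ``similar'' without supplying the coupled analysis you outline. Your proposed route via the limiting map $I-\alpha G D_\infty$ is reasonable, but note that $GD_\infty$ is not symmetric and does not preserve $S^\perp$, so the ``geometric contraction on $S^\perp$ / stabilization on $S$'' split needs a more careful invariant-subspace or perturbation argument than a straightforward spectral decomposition. As written, this step is a sketch rather than a proof in both your proposal and the paper.
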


\begin{proof}
If $B=I$, then this theorem is the same as Theorem~\ref{thm:convergence}. Hence, we will assume that $\mu_{(B,N)}>1$. In addition, since the proof mimics that of Theorem~\ref{thm:convergence} with minor differences in detail, we will emphasize only those minor differences. Let $\{\bx_k\}$ be the sequence generated by \eqref{gep:gdconv}. First of all, we note that for any $\bx_k\in\R^N$, 
\begin{equation}\label{gdgeigpb:eq1}
\Big|\frac{\langle A\bx_k,B\bx_k\rangle}{\langle B\bx_k,\bx_k\rangle}\Big|\leq \mu_{(A,N)}\mu_{(B,N)}\ \text{and}\ 1=\mu_{(B,1)}\leq \frac{\langle B\bx_k,B\bx_k\rangle}{\langle B\bx_k,\bx_k\rangle}\leq \mu_{(B,N)}.
\end{equation}
This implies that if $0<\alpha<\frac{1}{(\mu_{(A,N)} + \gamma)(\mu_{(B,N)})^3}$, then for $0<\epsilon :=1-\frac{1}{(\mu_{(B,N)})^2}$, 
\begin{equation}\label{gdgeigpb:eq2}
1-\alpha\Big[\frac{\langle A\bx_k,B\bx_k\rangle}{\langle B\bx_k,\bx_k\rangle} + \gamma\frac{\langle B\bx_k,B\bx_k\rangle}{\langle B\bx_k,\bx_k\rangle}\Big]\geq 1-\alpha(\mu_{(A,N)} + \gamma)\mu_{(B,N)} >\epsilon.
\end{equation}
We can see by \eqref{gdgeigpb:eq1} and \eqref{gdgeigpb:eq2} that for $k=0,1,2,\dots,$ 
\begin{align}\label{gd_conv:ineq}
\langle B\bx_{k+1},\bx_{k+1}\rangle \geq &\ \frac{\langle B\bx_{k+1},\bx_k\rangle^2}{\langle B\bx_k,\bx_k\rangle}=\Big(\frac{\langle \bx_k-\alpha\nabla F_{A,B}(\bx_k), B\bx_k\rangle}{\sqrt{\langle B\bx_k,\bx_k\rangle}}\Big)^2\nonumber\\
\geq&\ (\sqrt{\langle B\bx_k,\bx_k\rangle}\epsilon + \alpha\gamma)^2 \nonumber\\
=&\ \langle B\bx_k,\bx_k\rangle\epsilon^2 + 2\sqrt{\langle B\bx_k,\bx_k\rangle}\alpha\gamma\epsilon + (\alpha\gamma)^2.
\end{align}
This proves that $\langle B\bx_{k},\bx_{k}\rangle \geq (\alpha\gamma)^2$ for $k\geq 1$, which can be improved further as follows: for $k\geq 1$, we have
$$
\langle B\bx_k,\bx_k\rangle\epsilon^2 + 2\sqrt{\langle B\bx_k,\bx_k\rangle}\alpha\gamma\epsilon + (\alpha\gamma)^2 \geq \langle B\bx_k,\bx_k\rangle\epsilon^2 + (\alpha\gamma)^2(1+2\epsilon),
$$
resulting in
$$
\langle B\bx_{k+1},\bx_{k+1}\rangle \geq (\alpha\gamma)^2(1+2\epsilon)\sum_{l=0}^{k-1}\epsilon^{2l} + (\alpha\gamma)^2\epsilon^{2k}.
$$
Hence,
$$
\liminf_{k\rightarrow\infty}\langle B\bx_k,\bx_k\rangle \geq (\alpha\gamma)^2\frac{1+2\epsilon}{1-\epsilon^2}> \frac{(\alpha\gamma)^2}{1-\epsilon} = (\alpha\gamma\mu_{(B,N)})^2
$$
and there exists $K\in\N$ such that $k\geq K$ implies
$$
\langle B\bx_k,\bx_k\rangle> (\alpha\gamma\mu_{(B,N)})^2.
$$

Moreover, we can estimate $\langle B(\bx_k +t(\bx_{k+1}-\bx_k)),(\bx_k +t(\bx_{k+1}-\bx_k))\rangle$  for $t\in(0,1)$ as follows:
 for $k>1,2,\dots,$ and $0<t<1$,
\begin{align*}
&\ \langle B(\bx_k +t(\bx_{k+1}-\bx_k)),(\bx_k +t(\bx_{k+1}-\bx_k))\rangle = \|\sqrt{B}(\bx_k -t\alpha\nabla F_A(\bx_k))\|^2\\
\geq&\ \Big\langle \sqrt{B}(\bx_k-t\alpha\nabla F_{A,B}(\bx_k)),\frac{\sqrt{B}\bx_k}{\|\sqrt{B}\bx_k\|}\Big\rangle^2=\Big(\frac{\langle \bx_k-t\alpha\nabla F_{A,B}(\bx_k), B\bx_k\rangle}{\sqrt{\langle B\bx_k,\bx_k\rangle}}\Big)^2\\
=&\ \Big[\sqrt{\langle B\bx_k,\bx_k\rangle}\Big(1-t\alpha\Big[\frac{\langle A\bx_k,B\bx_k\rangle}{\langle B\bx_k,\bx_k\rangle} + \gamma\frac{\langle B\bx_k,B\bx_k\rangle}{\langle B\bx_k,\bx_k\rangle}\Big]\Big) + t\alpha\gamma\frac{\langle B\bx_k,B\bx_k\rangle}{\langle B\bx_k,\bx_k\rangle}\Big]^2\\
\geq&\ (\sqrt{\langle B\bx_k,\bx_k\rangle}(1-t\epsilon_1) + t\alpha\gamma)^2 =(\sqrt{\langle B\bx_k,\bx_k\rangle} +t(\alpha\gamma-\sqrt{\langle B\bx_k,\bx_k\rangle}\epsilon_1))^2\\
\geq&\ \begin{cases} \langle B\bx_k,\bx_k\rangle, &\text{ if } \alpha\gamma \geq \sqrt{\langle B\bx_k,\bx_k\rangle}\epsilon_1,\\
(\sqrt{\langle B\bx_k,\bx_k\rangle}(1-\epsilon_1) + \alpha\gamma)^2, &\text{ if }\alpha\gamma < \sqrt{\langle B\bx_k,\bx_k\rangle}\epsilon_1. \end{cases}
\end{align*}
where $\epsilon_1 = 1-\epsilon$. Noting that
$\alpha\gamma < \sqrt{\langle B\bx_k,\bx_k\rangle}\epsilon_1$ implies
$$
\sqrt{\langle B\bx_k,\bx_k\rangle}(1-\epsilon_1) + \alpha\gamma> \frac{\alpha\gamma(1-\epsilon_1)}{\epsilon_1} + \alpha\gamma = \frac{\alpha\gamma}{1-\epsilon} >\alpha\gamma\mu_{(B,N)},
$$
we have that for $k\geq K$, 
$$
\min_{t\in[0,1]}\langle  B(\bx_k +t(\bx_{k+1}-\bx_k)),(\bx_k +t(\bx_{k+1}-\bx_k))\rangle > (\alpha\gamma\mu_{(B,N)})^2.
$$
Therefore, we have
$$
\{\bx_k+ t(\bx_{k+1}-\bx_k) : t\in[0,1], k\geq K\} \subset \{\bx\in\R^N : \langle B\bx,\bx\rangle \geq (\alpha\gamma\mu_{(B,N)})^2\}.
$$ 

Since $\sqrt{B}\Big(\frac{\gamma}{\sqrt{\langle B\bx,\bx\rangle}}\Big[I-\Big(\frac{\sqrt{B}\bx}{\|\sqrt{B}\bx\|}\Big)\Big(\frac{\sqrt{B}\bx}{\|\sqrt{B}\bx\|}\Big)^T\Big]\Big)\sqrt{B}$ is positive semidefinite and
$$
\Big\|\sqrt{B}\Big(\frac{\gamma}{\sqrt{\langle B\bx,\bx\rangle}}\Big[I-\Big(\frac{\sqrt{B}\bx}{\|\sqrt{B}\bx\|}\Big)\Big(\frac{\sqrt{B}\bx}{\|\sqrt{B}\bx\|}\Big)^T\Big]\Big)\sqrt{B},\Big\|_{op}\leq \frac{1}{\alpha},
$$
and $\|A+\gamma B\|_{op}\leq \frac{1}{\alpha}$ and
$$
\nabla^2 F_{A,B}(\bx) = A+ \gamma B -\sqrt{B}\Big(\frac{\gamma}{\sqrt{\langle B\bx,\bx\rangle}}\Big[I-\Big(\frac{\sqrt{B}\bx}{\|\sqrt{B}\bx\|}\Big)\Big(\frac{\sqrt{B}\bx}{\|\sqrt{B}\bx\|}\Big)^T\Big]\Big)\sqrt{B},
$$
we can see that $\|\nabla^2F_{A,B}(\bx)\|_{op}\leq \frac{1}{\alpha}$.

The rest of the proof for convergence of the sequence $\{\bx_k\}_{k>K}$ to $\bx_*$ satisfying
$$
\sqrt{\langle B\bx_*,\bx_*\rangle} = \|\sqrt{B}\bx_*\|= \frac{\gamma}{\gamma + \lambda_*},
$$
where $(\bx_*,\lambda_*)$ is a solution pair of \eqref{gep}, is omitted due to the similarity of that in Theorem~\ref{thm:convergence}.
\qed\end{proof}

In Theorem~\ref{thm:gdgeigpb} above, convergence to a nonzero critical point of $F_{A,B}$ is confirmed. However, if we can efficiently deal with $B^{-1}$, then we can guarantee to find a global minimizer of $F_{A,B}$ by considering the gradient descent method with respect to a different inner product.

\begin{corollary}\label{thm:gdgeigpb1}
If we generate a sequence $\{\bx_k\}$ by
\begin{equation}\label{gd_gep1}
\bx_{k+1} = \bx_k - \alpha B^{-1}\nabla F_{A,B}(\bx_k),\ k=0,1,2\dots,
\end{equation}
with a randomly chosen $\bx_0\neq 0$ and $0<\alpha<\frac{1}{\mu_{(A,N)}+\gamma}$, then the sequence converges to $\bx_*$, where $(\bx_*,\lambda_*)$ is a solution pair of \eqref{gep} satisfying
$$
\sqrt{\langle B\bx_*,\bx_*\rangle} = \|\sqrt{B}\bx_*\| = \frac{\gamma}{\gamma+\lambda_*}
$$
and
$$
\lambda_* = \min\{\lambda : A-\lambda B \text{ is singular}\}.
$$
In fact, \eqref{gd_gep1} is the gradient descent of $F_{A,B}$ with respect to the inner product $\langle \bx,\by\rangle_B := \bx^TB\by.$
\end{corollary}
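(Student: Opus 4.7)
The plan is to reduce Corollary~\ref{thm:gdgeigpb1} to Theorem~\ref{thm:convergence} by the change of variables $\by = \sqrt{B}\bx$ that was introduced at the start of Section~\ref{sec:1} to identify $F_{A,B}$ with $F_C$, where $C = \sqrt{B}^{-1}A\sqrt{B}^{-1}$. Setting $\by_k := \sqrt{B}\bx_k$ and left-multiplying \eqref{gd_gep1} by $\sqrt{B}$ gives
$$
\by_{k+1} = \by_k - \alpha\,\sqrt{B}^{-1}\nabla F_{A,B}(\bx_k).
$$
A direct substitution of the gradient formula $\nabla F_{A,B}(\bx_k) = A\bx_k + \gamma B\bx_k - \gamma B\bx_k/\sqrt{\langle B\bx_k,\bx_k\rangle}$, combined with the identities $\sqrt{B}^{-1}A\bx_k = C\by_k$, $\sqrt{B}^{-1}B\bx_k = \by_k$, and $\sqrt{\langle B\bx_k,\bx_k\rangle} = \|\by_k\|$, collapses this to the standard gradient descent $\by_{k+1} = \by_k - \alpha\nabla F_C(\by_k)$, which is precisely the iteration \eqref{min_seq} analyzed in Theorem~\ref{thm:convergence} for the symmetric matrix $C$.

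Next, I would verify that the stepsize and shift hypotheses of Theorem~\ref{thm:convergence} are satisfied by the transformed iteration. Denoting by $\lambda_1\leq\cdots\leq \lambda_N$ the eigenvalues of $C$, a Rayleigh quotient argument combined with the standing assumption $\mu_{(B,1)}=1$ (so $\bx^T B\bx \geq \|\bx\|^2$) yields $\lambda_N \leq \mu_{(A,N)}$; the other standing hypothesis that either $\mu_{(A,1)}\geq 0$ or $-\mu_{(A,N)}\leq \mu_{(A,1)}<0$ guarantees $\mu_{(A,N)}\geq 0$, so this inequality is meaningful. A parallel sign-based case split shows $\lambda_1 \geq 0$ when $\mu_{(A,1)}\geq 0$ and $\lambda_1 \geq \mu_{(A,1)}$ when $\mu_{(A,1)}<0$, hence $\gamma > \max(0,-\mu_{(A,1)}) \geq -\lambda_1$ and $0<\alpha<\frac{1}{\mu_{(A,N)}+\gamma}\leq \frac{1}{\lambda_N+\gamma}$, matching the hypotheses of Theorem~\ref{thm:convergence} for the matrix $C$.

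Theorem~\ref{thm:convergence} then produces $\by_k\to\by_*$, an eigenvector of $C$ corresponding to an eigenvalue $\lambda_l$ satisfying $\|\by_*\|=\gamma/(\gamma+\lambda_l)$, where $l = \min\{j : \langle \by_0,r_j\rangle\neq 0\}$ and $\{r_j\}$ is the orthonormal eigenbasis of $C$ fixed just before Theorem~\ref{thm:gdgeigpb}. Because $\bx_0$ is chosen randomly, so is $\by_0 = \sqrt{B}\bx_0$, and almost surely $\langle \by_0, r_1\rangle \neq 0$, forcing $l=1$; the factorization $\det(A-\lambda B) = (\det\sqrt{B})^{2}\det(C-\lambda I)$ identifies $\lambda_1$ with $\min\{\lambda : A-\lambda B\text{ is singular}\}$. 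Reverting the substitution gives $\bx_k = \sqrt{B}^{-1}\by_k \to \sqrt{B}^{-1}\by_* =: \bx_*$, which satisfies $A\bx_* = \lambda_* B\bx_*$ and $\sqrt{\langle B\bx_*,\bx_*\rangle} = \|\by_*\| = \gamma/(\gamma+\lambda_*)$. Finally, the closing statement that \eqref{gd_gep1} \emph{is} the gradient descent of $F_{A,B}$ with respect to $\langle \cdot,\cdot\rangle_B$ follows from the one-line verification that the $B$-gradient, defined by $dF_{A,B}(\bx)[\bv] = \langle \nabla_B F_{A,B}(\bx),\bv\rangle_B$, equals $B^{-1}\nabla F_{A,B}(\bx)$ upon comparison with the usual directional derivative formula. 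The only piece requiring genuine care is the pair of eigenvalue comparisons $\lambda_N\leq \mu_{(A,N)}$ and $\gamma+\lambda_1>0$ used to justify the stepsize bound stated in the corollary; the rest is a mechanical translation of Theorem~\ref{thm:convergence}.
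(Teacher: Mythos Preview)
Your proposal is correct and follows essentially the same route as the paper: both arguments reduce to Theorem~\ref{thm:convergence} via the substitution $\by_k=\sqrt{B}\bx_k$ that turns \eqref{gd_gep1} into the standard gradient descent for $F_C$ with $C=\sqrt{B}^{-1}A\sqrt{B}^{-1}$, then verify $\lambda_N\le\mu_{(A,N)}$ and $\gamma>\max(0,-\lambda_1)$ from the standing assumptions to justify the stepsize, and finally translate the limit back and check the $\langle\cdot,\cdot\rangle_B$-gradient interpretation. Your treatment is slightly more explicit than the paper's about why a random $\bx_0$ forces $l=1$ and about the determinant identity linking the spectrum of $C$ to $\{\lambda:A-\lambda B\text{ singular}\}$, but the underlying argument is identical.
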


\begin{proof}
First of all, as we mentioned, \eqref{gep} is equivalent to
$$
B^{-1}A\bx = \lambda \bx,
$$
and to $$
C\by =\lambda \by,\ \text{ with } C =\sqrt{B}^{-1}A\sqrt{B}^{-1},\  \by=\sqrt{B}\bx.
$$
Therefore, $(\bx,\lambda)$ is a solution pair of \eqref{gep} if and only if $\bx$ is an eigenvector of $B^{-1}A$ corresponding to the eigenvalue $\lambda$ if and only if $\sqrt{B}\bx$ is an eigenvector of $C$ corresponding to the eigenvalue $\lambda$. Note that since $\langle \by, C\by\rangle = \frac{\langle \bx,A\bx\rangle}{\|\bx\|^2}\|\bx\|^2$ for $\bx = \sqrt{B}^{-1}\by$, and $\min_{\|\by\|=1}\|\sqrt{B}^{-1}\by\|\leq \frac{1}{\sqrt{\mu_{(B,1)}}}=1$,
$$
\lambda_1 =\min_{\|\by\|=1}\langle \by, C\by\rangle \geq \min_{\|\by\|=1}\mu_{(A,1)}\|\sqrt{B}^{-1}\by\|^2\geq \begin{cases}\mu_{(A,1)},&\text{ if } \mu_{(A,1)}<0,\\ \ 0,&\text{ if }\mu_{(A,1)}\geq 0.,\end{cases}
$$
Since $\gamma>\max(0,-\mu_{(A,1)})$ implies $\gamma >\max(0,-\lambda_1)$, we can apply Theorem~\ref{thm:convergence} to $$F_C(\by)=\frac{1}{2}\langle\by,C\by\rangle +\frac{\gamma}{2}\|\by\|^2 -\gamma\|\gamma\|.$$ Moreover, since $\lambda_N$, the largest eigenvalue of $C$, is at most $\mu_{(A,N)}>0$, we know that $0<\alpha<\frac{1}{\mu_{(A,N)}+\gamma}\leq \frac{1}{\lambda_N+\gamma}$ and that a sequence $\{\by_k\}$ generated by
$$
\by_{k+1} = \by_k -\alpha\nabla F_C(\by_k),\ k=0,1,2,\dots,
$$
with $\by_0\neq0$ chosen at random, converges to $\by_*$, an eigenvector of $C$ corresponding to the smallest eigenvalue $\lambda_*=\lambda_1$ with norm $\|\by_*\| = \frac{\gamma}{\gamma+\lambda_1}$.

On the other hand, with $\by_k = \sqrt{B}\bx_k$, $k=0,1,\dots$, we have
$$
\nabla F_C(\by_k) =(\sqrt{B})^{-1}\nabla F_{A,B}(\bx_k),
$$
which implies that \eqref{gd_gep1} is nothing but
$$
\by_{k+1} = \by_k -\alpha\nabla F_C(\by_k).
$$

Hence, the sequence generated by \eqref{gd_gep1}, with a randomly chosen $\bx_0\neq0$, converges to $\bx_*$, where $(\bx_*,\lambda_*)$ is a solution pair of \eqref{gep} satisfying 
$$\sqrt{\langle B\bx_*,\bx_*\rangle}= \frac{\gamma}{\gamma+\lambda_*}\ \text{ and }\ \lambda_* = \lambda_1 = \min\{\lambda : A-\lambda B \text{ is singular}\}.
$$

In addition, with respect to the inner product $\langle \bx,\by\rangle_B:=\bx^TB\by$, we note that 
$$
B^{-1}\nabla F_{A,B}(\bx) = B^{-1}A\bx + \gamma\Big(1-\frac{1}{\sqrt{\langle \bx,\bx\rangle_B}}\Big)\bx,
$$
which is the gradient of $F_{A,B}$ with respect to the inner product $\langle\cdot,\cdot\rangle_B$ because
$$
F_{A,B}(\bx) = \frac{1}{2}\langle \bx, B^{-1}A\bx\rangle_B + \frac{\gamma}{2}\|\bx\|_B^2 -\gamma\|\bx\|_B,
$$
Note also that $B^{-1}A$ is self-adjoint with respect to $\langle\cdot,\cdot\rangle_B$. Therefore, \eqref{gd_gep1} is the gradient descent of $F_{A,B}$ with respect to $\langle\cdot,\cdot\rangle_B$.
\qed\end{proof}

In general, with a nonsymmetrix square matrix $E$, minimizing the functional $F_E$ does not guarantee to find an eigenvector of $E$.  However, Theorem~\ref{thm:gdgeigpb} and Corollary~\ref{thm:gdgeigpb1} make it possible to find eigenvectors of $E$ in certain cases when $E$ decomposes into $E=B^{-1}A$ with a symmetric matrix $A$ and a symmetric positive definite matrix $B$. Moreover, if it is easy to compute $B^{-1}$, then Corollary~\ref{thm:gdgeigpb1} applies to find a global minimizer of $F_E$ with a better stepsize. We can also find subsequent eigenvectors in the same way as presented in Theorem~\ref{eigthm:2}.

\begin{corollary}\label{prop:multiple}
Let $(\vec{x}_1,\lambda_1), \dots, (\vec{x}_m,\lambda_m)$ be solution pairs of \eqref{gep} where $\lambda_1\leq \dots \leq \lambda_m$ are the first $m\geq 1$ smallest ones in $\{\lambda\in \R : A-\lambda B\ \text{ is singular}\}.$

We consider the following problem
\begin{equation}\label{eigeq:functionalsub1}
\min_{\bx\in\R^N} F_{A,B}(\bx)\ \text{ subject to }\ \langle \bx,\bx_k\rangle_B = 0,\ k=1,2,\dots,m.
\end{equation}
Then, any local minimizer $\bx_*$ of $F_{A,B}$ in the subspace orthogonal to $\{\bx_1,\dots,\bx_m\}$ with respect to the inner product $\langle\cdot,\cdot\rangle_B$, is a solution with $\sqrt{\langle B\bx_*,\bx_*\rangle} =\frac{\gamma}{\gamma+\lambda_*}$ and
$$
\lambda_* = \min(\{\lambda\in\R : A-\lambda B\ \text{ is singular}\}\setminus\{\lambda_1,\dots,\lambda_m\}).
$$
\end{corollary}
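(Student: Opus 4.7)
The natural approach is to reduce to the standard case $B=I$ via the change of variables $\by=\sqrt{B}\bx$ that was used repeatedly earlier in the paper, and then invoke Corollary~\ref{eigthm:2} applied to $F_C$ where $C=\sqrt{B}^{-1}A\sqrt{B}^{-1}$. Indeed we already know from the discussion preceding Lemma~\ref{eiglem:1} that $F_{A,B}(\bx)=F_C(\by)$ under $\by=\sqrt{B}\bx$, so the plan is simply to check that the $B$-orthogonality constraints in \eqref{eigeq:functionalsub1} translate cleanly into ordinary orthogonality constraints for $F_C$, and that the relevant eigenstructure is preserved.

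First I would verify the translation of the constraints. For any $\bx\in\R^N$, setting $\by=\sqrt{B}\bx$ and $\by_k=\sqrt{B}\bx_k$, we have
$$
\langle \bx,\bx_k\rangle_B \;=\; \bx^T B\bx_k \;=\; (\sqrt{B}\bx)^T(\sqrt{B}\bx_k)\;=\;\langle \by,\by_k\rangle,
$$
so the constraint $\langle\bx,\bx_k\rangle_B=0$ becomes ordinary orthogonality $\langle\by,\by_k\rangle=0$ in the $\by$-coordinates. Next I would check that each $\by_k=\sqrt{B}\bx_k$ is an eigenvector of $C$ with eigenvalue $\lambda_k$: since $A\bx_k=\lambda_k B\bx_k$, multiplying by $\sqrt{B}^{-1}$ and using $\bx_k=\sqrt{B}^{-1}\by_k$ gives $C\by_k=\sqrt{B}^{-1}A\sqrt{B}^{-1}\by_k=\sqrt{B}^{-1}A\bx_k=\lambda_k\sqrt{B}\bx_k=\lambda_k\by_k$. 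Moreover the correspondence $\lambda\mapsto\lambda$ between eigenvalues of $C$ and generalized eigenvalues of $(A,B)$ is a bijection (as noted in the proof of Corollary~\ref{thm:gdgeigpb1}), so $\lambda_1,\dots,\lambda_m$ are precisely the $m$ smallest eigenvalues of $C$ and $\by_1,\dots,\by_m$ form an orthonormal set of eigenvectors of $C$ (possibly after normalization; since the $\bx_k$ here come from Theorem~\ref{thm:gdgeigpb}/Corollary~\ref{thm:gdgeigpb1} they are not necessarily of unit $B$-norm, but we may rescale them without changing the constraint subspace).

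With those two observations in hand, the constrained problem \eqref{eigeq:functionalsub1} is equivalent to
$$
\min_{\by\in\R^N} F_C(\by)\quad \text{subject to}\quad \langle\by,\by_k\rangle=0,\ k=1,\dots,m,
$$
which is exactly the setting of Corollary~\ref{eigthm:2} applied to the symmetric matrix $C$ with the orthonormal eigenvectors $\by_1/\|\by_1\|,\dots,\by_m/\|\by_m\|$. That corollary then tells us that any local minimizer $\by_*$ is a global minimizer, is an eigenvector of $C$ corresponding to the next smallest eigenvalue $\lambda_*$ of $C$ not already in $\{\lambda_1,\dots,\lambda_m\}$, and satisfies $\|\by_*\|=\gamma/(\gamma+\lambda_*)$. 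Translating back via $\bx_*=\sqrt{B}^{-1}\by_*$, this $\bx_*$ is the desired generalized eigenvector, with $\sqrt{\langle B\bx_*,\bx_*\rangle}=\|\by_*\|=\gamma/(\gamma+\lambda_*)$, and $\lambda_*$ is the smallest element of $\{\lambda\in\R:A-\lambda B\text{ is singular}\}\setminus\{\lambda_1,\dots,\lambda_m\}$.

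There is essentially no hard step: the only delicate point is making sure that ``local minimizer in the $B$-orthogonal subspace'' of $F_{A,B}$ really corresponds to ``local minimizer in the orthogonal subspace'' of $F_C$. This is immediate because the map $\bx\mapsto\sqrt{B}\bx$ is a linear homeomorphism of $\R^N$ that sends the $B$-orthogonal complement of $\{\bx_1,\dots,\bx_m\}$ bijectively onto the ordinary orthogonal complement of $\{\by_1,\dots,\by_m\}$, and $F_{A,B}$ and $F_C$ agree under this map; hence the local minimizer property is preserved in both directions. Once this is noted, the result follows.
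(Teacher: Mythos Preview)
Your proposal is correct and is exactly the argument the paper intends: the corollary is stated without proof in the paper, but its placement immediately after Corollary~\ref{thm:gdgeigpb1} (which establishes the $\by=\sqrt{B}\bx$ dictionary between $F_{A,B}$ and $F_C$) makes clear that the intended justification is precisely the reduction to Corollary~\ref{eigthm:2} via this change of variables, which is what you carry out. Your remark that the $\bx_k$ may need rescaling to make $\{\by_k\}$ orthonormal before invoking Corollary~\ref{eigthm:2} is the only thing to keep track of, and you handle it correctly.
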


\subsection{The Newton's method}

As for the second algorithm with a faster rate of convergence, we will analyze the Newton's method to find nonzero critical points of \eqref{eigpb}, which are eigenvectors of $A$.
Since the functional $F_A$ is continuously twice differentiable at $\bx\neq 0$, if we apply the Newton's method, we will generate a sequence $\{\bx_k\}$ by
\begin{equation}\label{newton}
\bx_{k+1} = \bx_k -(\nabla^2F_A(\bx_k))^{-1}\nabla F_A(\bx_k),\ k=0,1,\dots,
\end{equation}
with an initial $\bx_0\neq 0$ unless $\nabla^2F(\bx_k)$ is singular.  We can observe that  \eqref{newton} becomes for $k=0,1,2,\dots,$
$$
\bx_{k+1} = \Big[\frac{1}{\gamma}A + \Big(1-\frac{1}{\|\bx_k\|}\Big)I + \frac{1}{\|\bx_k\|}\Big(\frac{\bx_k}{\|\bx_k\|}\Big)\Big(\frac{\bx_k}{\|\bx_k\|}\Big)^T\Big]^{-1}\frac{\bx_k}{\|\bx_k\|}.
$$
Hence, we propose the following scheme: with an initial guess $\bx_0\neq 0$,
for $k=0,1,2,\dots,$ compute $\by_k$ and $\bx_{k+1}$ by
$
\by_k = \frac{\bx_k}{\|\bx_k\|},
$
and
\begin{equation}\label{newton1}
\Big[\frac{1}{\gamma}A + \Big(1-\frac{1}{\|\bx_k\|}\Big)I + \frac{1}{\|\bx_k\|}\Big(\frac{\bx_k}{\|\bx_k\|}\Big)\Big(\frac{\bx_k}{\|\bx_k\|}\Big)^T\Big]\bx_{k+1} = \by_k.
\end{equation}

We wrote \eqref{newton1} in the given form to enhance its similarity either to the inverse iteration or to the Rayleigh quotient iteration. 

As for convergence, we will show that convergence of the norm $\|\bx_k\|$ is equivalent to convergence of $\bx_k$.
\begin{theorem}\label{newton:convergence}
Let $A\in\symm_N(\R)$ have eigenvalues $\lambda_1\leq \cdots \leq \lambda_N$. Let $\gamma>\max(0,-\lambda_1)$, and $0<\|\bx_0\|\neq \frac{\gamma}{\gamma+\lambda_j}$ for any $1\leq j\leq N$. 

\begin{enumerate}
\item Suppose that a sequence $\{\bx_k\}_{k=1}^\infty$ can be generated by \eqref{newton1}, i.e., $\bx_k$ is computable for all $k\in\N$, and that $\|\bx_k\|$ converges to $\eta>0$.
If $\|\bx_k\|\neq \frac{\gamma}{\gamma+\lambda_j}$ for any $1\leq j\leq N$ and for all $k\geq1$, then there exists $1\leq i_0\leq N$ such that $\eta = \frac{\gamma}{\gamma+\lambda_{i_0}}$ and $\bx_k$ converges to an eigenvector $\bx_*$ of $A$ corresponding to the eigenvalue $\lambda_{i_0}$ with $\|\bx_*\| = \frac{\gamma}{\gamma+\lambda_{i_0}}$. 
\item On the other hand, suppose that we generate a sequence $\{\bx_k\}_{k=1}^{k_0}$ for some $k_0\in\N$ by \eqref{newton1} and $\bx_{k_0}$ satisfies $\|\bx_{k_0}\| = \frac{\gamma}{\gamma+\lambda_{i}}$, where $\lambda_i$ for some $1\leq i\leq N$ is an eigenvalue of $A$ with multiplicity 1. Let  $q_i$ be a unit eigenvector of $A$ corresponding to $\lambda_i$. If $\bx_{k_0}$ is not a critical point of $F_A$ with $|q_i^T\bx_{k_0}|>0$, then $\bx_{k_0+1}$ is an eigenvector of $A$ corresponding to the eigenvalue $\lambda_i$. If $|q_i^T\bx_{k_0}|\neq \frac{\gamma(\gamma+\lambda_j)}{(\gamma+\lambda_i)^2}$ for $j< i$, then $\bx_{k_0+2}$ is a critical point of $F_A$, i.e., an eigenvector of $A$ corresponding to the eigenvalue $\lambda_i$ with norm $\|\bx_{k_0+2}\| = \frac{\gamma}{\gamma+\lambda_i}$. However, if $|q_i^T\bx_{k_0}|= \frac{\gamma(\gamma+\lambda_j)}{(\gamma+\lambda_i)^2}$ for some $j< i$, then the system becomes singular and we may not compute $\bx_{k_0+2}$ uniquely. In any case, the algorithm terminates in $k_0+2$ iterations.
\end{enumerate}
\end{theorem}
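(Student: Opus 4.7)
The plan is to reduce everything to the eigenbasis of $A$. Diagonalize $A = Q\Lambda Q^T$ with orthonormal eigenvectors $q_1,\dots,q_N$, and write $\bx_k = \sum_i \mu_{k,i}q_i$, $r_k = \|\bx_k\|$, $a_i = \gamma+\lambda_i$. The matrix in \eqref{newton1} decomposes as $D_k + r_k^{-3}\bx_k\bx_k^T$, where $D_k$ is diagonal with entries $(a_i r_k - \gamma)/(\gamma r_k)$. Under the standing assumption $r_k \neq \gamma/a_j$ for all $j$, $D_k$ is invertible and Sherman–Morrison yields the explicit scalar recursion
$$
\mu_{k+1,i} \;=\; \frac{t_k}{a_i r_k - \gamma}\,\mu_{k,i}, \qquad t_k \;=\; \frac{\gamma r_k^{2}}{r_k^{2} + \gamma w_k}, \qquad w_k \;=\; \sum_i \frac{\mu_{k,i}^{2}}{a_i r_k - \gamma}.
$$
All subsequent analysis will be driven by this recursion.

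For Part 1, I would first compare components corresponding to distinct eigenvalues. Since $\mu_{k+1,i}/\mu_{k+1,j} = \bigl((a_j r_k-\gamma)/(a_i r_k-\gamma)\bigr)\,\mu_{k,i}/\mu_{k,j}$, and the prefactor converges to a real number different from $\pm 1$ whenever $\lambda_i\neq\lambda_j$ (except possibly when one of $a_i\eta-\gamma$, $a_j\eta-\gamma$ vanishes), boundedness of $\{\bx_k\}$ forces all components outside a single eigenspace $I_0 = \{i:\lambda_i=\lambda_{i_0}\}$ to tend to $0$. Within $I_0$ the recursion preserves ratios $\mu_{k,i}/\mu_{k,j}$ exactly, so the direction inside $\eigspace(\lambda_{i_0})$ stabilizes. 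To pin down $\eta$, I would feed this back into the formulas: with $\sum_{i\in I_0}\mu_{k,i}^{2}\to\eta^{2}$ and $\mu_{k,i}\to 0$ off $I_0$, one computes $w_k \sim \eta^{2}/(a_{i_0}r_k-\gamma)$, and setting the self-consistency relation $t_k/(a_{i_0}r_k-\gamma)\to 1$ yields the algebraic identity $(a_{i_0}\eta-\gamma)^{2}=0$, i.e.\ $\eta = \gamma/a_{i_0}$. Finally the same asymptotics give $t_k/(a_{i_0}r_k-\gamma)\to +1$ (positive sign), so $\mu_{k,i}$ for $i\in I_0$ genuinely converges, which upgrades the norm convergence to convergence of $\bx_k$ itself.

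For Part 2, when $r_{k_0}=\gamma/a_i$ with $\lambda_i$ simple and $\mu_{k_0,i}\neq 0$, the diagonal part $D_{k_0}$ has a single zero at entry $(i,i)$, but the rank-one correction repairs it since $q_i^{T}\bx_{k_0}\neq 0$. Decomposing \eqref{newton1} in the eigenbasis, the $j$-th component ($j\neq i$) reads $d_{k_0,j}\nu_j + r_{k_0}^{-3}(\bx_{k_0}^{T}\bx_{k_0+1})\mu_{k_0,j} = \mu_{k_0,j}/r_{k_0}$, while the $i$-th component pins $\bx_{k_0}^{T}\bx_{k_0+1} = r_{k_0}^{2}$; substitution forces $\nu_j = 0$ for every $j\neq i$. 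Hence $\bx_{k_0+1} = (r_{k_0}^{2}/\mu_{k_0,i})q_i$, an eigenvector of $A$ for $\lambda_i$. At the following step the rank-one correction $\bx_{k_0+1}\bx_{k_0+1}^{T}/r_{k_0+1}^{3}$ modifies only the $(i,i)$-diagonal, and a short computation shows this entry simplifies miraculously to $a_i/\gamma$, independent of $r_{k_0+1}$. The resulting matrix is diagonal and invertible precisely when $r_{k_0+1} = r_{k_0}^{2}/|\mu_{k_0,i}| \neq \gamma/a_j$ for every $j\neq i$; translating this back through $r_{k_0} = \gamma/a_i$ yields the stated condition $|q_i^{T}\bx_{k_0}|\neq \gamma(\gamma+\lambda_j)/(\gamma+\lambda_i)^{2}$, and the inequality $|\mu_{k_0,i}|\leq r_{k_0}=\gamma/a_i$ automatically confines the obstruction to indices $j<i$. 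When the condition holds, solving the diagonal system gives $\bx_{k_0+2}=\pm(\gamma/a_i)q_i$, a critical point with the correct norm.

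The main obstacle will be Part 1, specifically the self-consistency step that pins $\eta$ to $\gamma/a_{i_0}$ and the sign argument. The recursion is nonautonomous because $t_k$ itself depends on the entire vector $\bx_k$, so merely knowing $r_k\to\eta$ does not by itself determine $t_k$; one has to simultaneously track the collapse of off-$I_0$ components and the blow-up of $w_k$, then close the loop using the normalization $\sum_i \mu_{k,i}^{2}=r_k^{2}$. Identifying the correct sign of $t_k/(a_{i_0}r_k-\gamma)$ in the limit, which is what promotes norm convergence to vector convergence, will require a careful expansion of the indeterminate $0/0$ form.
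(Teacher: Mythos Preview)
Your Sherman--Morrison reduction is correct and yields the clean scalar recursion $\mu_{k+1,i}=\dfrac{t_k}{a_ir_k-\gamma}\,\mu_{k,i}$; this is equivalent to the paper's coordinate form \eqref{newton2} with $t_k=\gamma\bigl(1-(r_{k+1}/r_k)\by_k^T\by_{k+1}\bigr)$.  Your Part~2 argument is right and matches the paper's computation essentially line for line, including the observation that the $(i,i)$ entry collapses to $a_i/\gamma$ and that the obstruction is confined to $j<i$ via $|\mu_{k_0,i}|<r_{k_0}$.

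The gap is in Part~1, and it is exactly the circularity you flag at the end, but it bites earlier than you think.  Your sentence ``the prefactor converges to a real number different from $\pm 1$ whenever $\lambda_i\neq\lambda_j$'' is false: $(a_j\eta-\gamma)/(a_i\eta-\gamma)=-1$ whenever $(a_i+a_j)\eta=2\gamma$, which is a perfectly admissible value of $\eta$ a~priori.  In that case two eigenspaces with distinct eigenvalues tie for the minimum $|a_i\eta-\gamma|$, the ratio argument does not separate them, and your self-consistency step---which presumes a \emph{single} surviving eigenspace in order to evaluate $w_k\sim\eta^2/(a_{i_0}r_k-\gamma)$---cannot even be set up.  So the loop ``identify $I_0$ by ratios $\Rightarrow$ run self-consistency $\Rightarrow$ conclude $\eta=\gamma/a_{i_0}$'' does not close as written: the first arrow needs the conclusion of the third.

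The paper breaks this circularity by reversing the order.  It first proves, by a direct contradiction argument on $\limsup_k \by_k^T\by_{k+1}$ (equivalently $\liminf_k t_k$), that the limit equals $1$; from a subsequence along which $\by_k^T\by_{k+1}\to 1$ it then forces $\eta=\gamma/(\gamma+\lambda_{i_0})$ for some $i_0\in\bJ_0$.  Once $\eta$ is pinned to an eigenvalue norm, your ratio argument works cleanly because the relevant prefactor tends to~$0$, not merely to something of modulus $\neq 1$, and the $-1$ scenario is automatically excluded.  After that, the paper's endgame and yours coincide.  If you want to keep your ratio-first route, you will need an independent argument ruling out the two-eigenvalue tie; absent that, the quickest repair is to import the paper's $\limsup\by_k^T\by_{k+1}=1$ step (or, in your variables, $\liminf t_k=0$) before invoking ratios.
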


\begin{proof}
It suffices to consider the case that $A$ is a diagonal matrix with diagonal entries $\lambda_1\leq\lambda_2\leq\cdots\leq\lambda_N$. Then, \eqref{newton1} becomes
\begin{equation}\label{newton2}
\Big(1+\frac{\lambda_j}{\gamma}-\frac{1}{\|\bx_k\|}\Big) \bx_{k+1,j} = \by_{k,j}\Big(1-\frac{\|\bx_{k+1}\|}{\|\bx_k\|}(\by_k^T\by_{k+1})\Big),\ j=1,2,\dots,N,
\end{equation}
where $\bx_{k+1} = \begin{bmatrix}\bx_{k+1,1}& \cdots & \bx_{k+1,N}\end{bmatrix}^T$ and $\by_k = \frac{\bx_k}{\|\bx_k\|}$.  
Let $\{\bx_k\}$ be a sequence generated by \eqref{newton2} with $\|\bx_k\|\neq\frac{\gamma}{\gamma+\lambda_j}$ for any $1\leq j\leq N$ and for all $k\in\N\cup\{0\}$.

Firstly, we consider the case that $\|\bx_k\|$ converges to $\eta>0$. Since $\bx_k$ is computable for all $k\in\N$, we can see from \eqref{newton2} that $$1 - \frac{\|\bx_{k+1}\|}{\|\bx_{k}\|}(\by_k^T\by_{k+1})\neq 0,\quad k\geq 0.$$
By setting $\bJ_{0}:=\{j\in\{1,2,\dots,N\} : \bx_{0,j}\neq 0\}$, we know that for $k\geq0$,
$$
\bx_{k,j} \neq 0 \text{ if and only if } j\in \bJ_{0}.
$$
We will now prove $\limsup_k \by_k^T\by_{k+1}=1$ by contradiction. Suppose that $$\limsup_k \by_k^T\by_{k+1}<1.$$ Then, there exists $\epsilon<1$ with $\limsup_k \by_k^T\by_{k+1}=\epsilon$. Given $\delta\in(0,1-\epsilon)$, we may choose $l_1\in\N$ so that $k\geq l_1$ implies 
\begin{equation}\label{newton:condition}
|\|\bx_k\|-\eta|<\frac{\delta}{2}\quad\text{and}\quad\Big|\frac{\|\bx_{k}\|}{\|\bx_{k+1}\|} -1\Big|<\frac{\delta}{2}\quad\text{and}\quad \by_k^T\by_{k+1}<\epsilon+\frac{\delta}{2}.
\end{equation}
We also choose $J\in\bJ_{0}$ satisfying
$$
\Big|\eta\Big(1+\frac{\lambda_J}{\gamma}\Big)-1\Big| = \min_{j\in \bJ_{0}}\Big|\eta\Big(1+\frac{\lambda_j}{\gamma}\Big)-1\Big|.
$$ 
From \eqref{newton2}, we see that for $k\geq l_1$,
\begin{equation}\label{newton4}
\frac{|\bx_{k+1,J}|}{\|\bx_{k+1}\|} \geq \frac{1-\delta-\epsilon}{\Big|\eta(1+\frac{\lambda_J}{\gamma})-1\Big| + \frac{\delta}{2}(1+\frac{\lambda_J}{\gamma})}\frac{|\bx_{k,J}|}{\|\bx_{k}\|}.
\end{equation}
If $\Big|\eta\Big(1+\frac{\lambda_J}{\gamma}\Big)-1\Big| <1-\epsilon$, then by choosing $\delta\in(0,1-\epsilon)$ satisfying
$$
\Big|\eta\Big(1+\frac{\lambda_J}{\gamma}\Big)-1\Big| +\frac{\delta}{2}\Big(1+\frac{\lambda_J}{\gamma}\Big) < 1-\delta-\epsilon,
$$
we can see from \eqref{newton4} that $\lim_{k\rightarrow\infty}\frac{|\bx_{k+1,J}|}{\|\bx_{k+1}\|} =\infty$, which is impossible. Hence,
$$
\min_{j\in \bJ_{0}}\Big|\eta\Big(1+\frac{\lambda_j}{\gamma}\Big)-1\Big| \geq 1-\epsilon
\quad\text{ i.e., }\quad
\eta \leq \frac{\epsilon\gamma}{\gamma + \lambda^*}\  \text{ or }\  \eta \geq \frac{(2-\epsilon)\gamma}{\gamma  +\lambda_*},
$$
where $\lambda_* =\min_{j\in \bJ_{0}} \lambda_j$ and $\lambda^* =\max_{j\in \bJ_{0}} \lambda_j$.

Suppose that $\eta \leq \frac{\epsilon\gamma}{\gamma + \lambda^*}$.
For any $0<\delta<\frac{(1-\epsilon)\gamma}{2\gamma+\lambda^*}<1-\epsilon$, we can see from \eqref{newton:condition} that for each $j\in \bJ_{0}$, $k\geq l_1$ implies 
$$
\|\bx_k\|\Big(1+\frac{\lambda_j}{\gamma}\Big)-1 < \Big(\eta+\frac{\delta}{2}\Big)\Big(1+\frac{\lambda_j}{\gamma}\Big)-1\leq \frac{\epsilon(\gamma+\lambda_j)}{\gamma+\lambda^*} + \frac{\delta}{2}\Big(1+\frac{\lambda_j}{\gamma}\Big)-1<0,
$$
and 
$$
\frac{\|\bx_k\|}{\|\bx_{k+1}\|} - (\by_k^T\by_{k+1}) > 1-\frac{\delta}{2}-(\by_k^T\by_{k+1})>0.
$$
This results in, for $k\geq l_1$,
\begin{align*}
\by_k^T\by_{k+1}=&\  \Big(\frac{\|\bx_k\|}{\|\bx_{k+1}\|} - (\by_k^T\by_{k+1})\Big)\Big(\sum_{j=1}^N\frac{\by_{k,j}^2}{\|\bx_k\|(1+\frac{\lambda_j}{\gamma})-1}\Big)\\
\leq&\ \Big(1-\frac{\delta}{2} - (\by_k^T\by_{k+1})\Big)\Big(\sum_{j\in\bJ_0}\frac{\by_{k,j}^2}{\|\bx_k\|(1+\frac{\lambda_j}{\gamma})-1}\Big)<0.
\end{align*}
This implies that $\epsilon \leq 0$, i.e., $\eta\leq 0$, which is a contradiction, i.e., $\eta \leq \frac{\epsilon\gamma}{\gamma + \lambda^*}$ is not possible. Hence, we must have $\eta\geq  \frac{(2-\epsilon)\gamma}{\gamma  +\lambda_*}$.
Again with $0<\delta<\frac{(1-\epsilon)\gamma}{2\gamma+\lambda_*}<1-\epsilon$,  we can also see that for $k\geq l_1$, and for each $j\in \bJ_{0}$, 
\begin{align*}
\|\bx_k\|\Big(1+\frac{\lambda_j}{\gamma}\Big)-1 >&\ \Big(\eta-\frac{\delta}{2}\Big)\Big(1+\frac{\lambda_j}{\gamma}\Big)-1\\
\geq&\ \frac{(2-\epsilon)(\gamma+\lambda_j)}{\gamma+\lambda_*} - \frac{\delta}{2}\Big(1+\frac{\lambda_j}{\gamma}\Big)-1\\
>&\ \frac{(2-\epsilon)(\gamma+\lambda_j)}{\gamma+\lambda_*} - \frac{(1-\epsilon)(\gamma+\lambda_j)}{2(2\gamma+\lambda_*)}-1>0,
\end{align*}
which implies that for $k\geq l_1$, since $\epsilon+\delta<1$,
\begin{align*}
\by_k^T\by_{k+1}=&\  \Big(\frac{\|\bx_k\|}{\|\bx_{k+1}\|} - (\by_k^T\by_{k+1})\Big)\Big(\sum_{j=1}^N\frac{\by_{k,j}^2}{\|\bx_k\|(1+\frac{\lambda_j}{\gamma})-1}\Big)\\
\geq&\ \Big(1-\frac{\delta}{2} - (\by_k^T\by_{k+1})\Big)\Big(\sum_{j\in\bJ_0}\frac{\by_{k,j}^2}{\|\bx_k\|(1+\frac{\lambda_j}{\gamma})-1}\Big)>0.
\end{align*}
Hence, we have
\begin{equation}\label{newton:epsilon}
0\leq \epsilon<1.
\end{equation}

If we extract a subsequence  $\by_{k_n}$ such that $\by_{k_n}^T\by_{{k_n}+1}\rightarrow \epsilon$ as $n\rightarrow\infty$, then using the form of \eqref{newton1} and knowing that 
$$
\liminf_{n\rightarrow\infty}\frac{\by_{{k_n}+1}^T}{\|\bx_{{k_n}+1}\|}\frac{1}{\gamma}A\bx_{{k_n}+1} =\liminf_{n\rightarrow\infty} \frac{1}{\gamma}\by_{{k_n}+1}^TA\by_{{k_n}+1}\geq \frac{\lambda_*}{\gamma} 
$$
 we can see that
\begin{align*}
\frac{\by_{{k_n}+1}^T}{\|\bx_{{k_n}+1}\|}&\Big[\frac{1}{\gamma}A + \Big(1-\frac{1}{\|\bx_{k_n}\|}\Big)I + \frac{1}{\|\bx_{k_n}\|}\Big(\frac{\bx_{k_n}}{\|\bx_{k_n}\|}\Big)\Big(\frac{\bx_{k_n}}{\|\bx_{k_n}\|}\Big)^T\Big]\bx_{{k_n}+1} = \frac{\by_{{k_n}+1}^T\by_{k_n}}{\|\bx_{{k_n}+1}\|}\\
\Leftrightarrow &\quad \frac{1}{\gamma}\by_{{k_n}+1}^TA\by_{{k_n}+1} + \Big(1-\frac{1}{\|\bx_{k_n}\|}\Big) +\frac{1}{\|\bx_{k_n}\|}(\by_{k_n}^T\by_{k_n+1})^2 = \frac{\by_{{k_n}+1}^T\by_{k_n}}{\|\bx_{{k_n}+1}\|}\\
\Rightarrow &\quad \frac{\lambda_*}{\gamma} + \Big(1 - \frac{1}{\eta}\Big) + \frac{\epsilon^2}{\eta} \leq \frac{\epsilon}{\eta}\\
\Leftrightarrow &\quad \eta \leq \frac{(1+\epsilon-\epsilon^2)\gamma}{\gamma+\lambda_*}.
\end{align*}
Therefore, we obtain
\begin{equation}\label{newton:conclusion1}
 \frac{(2-\epsilon)\gamma}{\gamma  +\lambda_*} \leq\eta\leq  \frac{(1+\epsilon-\epsilon^2)\gamma}{\gamma  +\lambda_*}.
\end{equation}
However, this is a contradiction since \eqref{newton:epsilon} implies $1+\epsilon-\epsilon^2 < 2-\epsilon$, i.e., $\eta\geq \frac{(2-\epsilon)\gamma}{\gamma+\lambda_*}$ is not possible, either. Therefore, we conclude that $\epsilon<1$ is impossible, i.e., $\limsup_{k} \by_k^T\by_{k+1} = 1$.

We can now show that $\eta = \frac{\gamma}{\gamma+\lambda_{i_0}}$ for some $1\leq i_0\leq N$. Suppose that $\eta\neq \frac{\gamma}{\gamma  +\lambda_j}$ for any $1\leq j\leq N$. By considering a subsequence $\{\by_{k_n}\}$ with $\lim_{n\rightarrow \infty}\by_{k_n}^T\by_{{k_n}+1}=1$, it is easy to see using \eqref{newton2} that 
\begin{align*}
\infty =&\ \lim_{n\rightarrow\infty}\frac{1}{(\frac{\|\bx_{k_n}\|}{\|\bx_{{k_n}+1}\|} -(\by_{k_n}^T\by_{{k_n}+1}))^2} = \lim_{n\rightarrow\infty}\sum_{j=1}^N \frac{\by_{{k_n},j}^2}{(\|\bx_{k_n}\|(1+\frac{\lambda_j}{\gamma})-1)^2}\\
 \leq &\ \max_{1\leq j\leq N}\frac{1}{(\eta(1+\frac{\lambda_j}{\gamma})-1)^2}<\infty,
\end{align*}
which is a contradiction. Hence, we have that $\eta = \frac{\gamma}{\gamma+\lambda_{i_0}}$ for some $1\leq i_0\leq N$.

Next, we will show that $\bx_k$ converges to an eigenvector $\bx_*$ of $A$ corresponding to $\lambda_{i_0}$ with norm $\|\bx_*\|=\eta=\frac{\gamma}{\gamma+\lambda_{i_0}}$. Firstly, we show  that there exists $j\in\bJ_0$ such that $\lambda_j=\lambda_{i_0}$. As above, by choosing a subsequence $\by_{k_n}$ such that $\by_{k_n}^T\by_{k_n+1}\rightarrow 1$ as $n\rightarrow\infty$, it is easy to see that there must be $j\in\bJ_0$ with $\lambda_j = \lambda_{i_0}$. That is, $\{j\in\bJ_0 : \lambda_j=\lambda_{i_0}\}\neq\emptyset$. Hence, without loss of generality we will say that $i_0\in\bJ_0$. 

Let $k_0\in\N$ be such that $k\geq k_0$ implies
$$
\Big|1+\frac{\lambda_{i_0}}{\gamma} - \frac{1}{\|\bx_k\|}\Big|<\min_{\lambda_j\neq \lambda_{i_0}}\frac{|\lambda_j-\lambda_{i_0}|}{3\gamma}.
$$
Then, for $k\geq k_0$,  and for $\lambda_j\neq \lambda_{i_0}$,
$$
\Big|\frac{1+\frac{\lambda_{i_0}}{\gamma} - \frac{1}{\|\bx_k\|}}{1+\frac{\lambda_{j}}{\gamma} - \frac{1}{\|\bx_k\|}}\Big|<\frac{1}{2}.
$$
Moreover,
since we have that for $1\leq j\leq N$, 
\begin{equation}\label{newton:infprod}
\frac{\bx_{K+1,j}}{\|\bx_{K+1}\|} = \Big(\Pi_{k=0}^K \Big[\frac{\frac{1}{\|\bx_{k+1}\|} - \frac{1}{\|\bx_{k}\|}(\by_k^T\by_{k+1})}{1+\frac{\lambda_j}{\gamma}-\frac{1}{\|\bx_k\|}}\Big]\Big)\frac{\bx_{0,j}}{\|\bx_{0}\|},\  \ K\geq 0,
\end{equation}
if we choose $j$ for which $\lambda_j\neq \lambda_{i_0}$, we can see that
\begin{align*}
\Big|\frac{\bx_{K+1,j}}{\bx_{K+1,i_0}}\Big| =&\ \Big(\Pi_{k=k_0}^K \Big|\frac{1+\frac{\lambda_{i_0}}{\gamma}-\frac{1}{\|\bx_k\|}}{1+\frac{\lambda_j}{\gamma}-\frac{1}{\|\bx_k\|}}\Big|\Big)\Big(\Pi_{k=0}^{k_0-1} \Big|\frac{1+\frac{\lambda_{i_0}}{\gamma}-\frac{1}{\|\bx_k\|}}{1+\frac{\lambda_j}{\gamma}-\frac{1}{\|\bx_k\|}}\Big|\Big)\Big|\frac{\bx_{0,j}}{\bx_{0,i_0}}\Big|\\
\leq&\ \frac{1}{2^{K-k_0+1}}\Big(\Pi_{k=0}^{k_0-1} \Big|\frac{1+\frac{\lambda_{i_0}}{\gamma}-\frac{1}{\|\bx_k\|}}{1+\frac{\lambda_j}{\gamma}-\frac{1}{\|\bx_k\|}}\Big|\Big)\Big|\frac{\bx_{0,j}}{\bx_{0,i_0}}\Big|\rightarrow 0\ \text{ as }\ K\rightarrow \infty.
\end{align*}
Since  $\frac{\bx_{K+1,j}}{\bx_{K+1,i_0}} = \frac{\bx_{0,j}}{\bx_{0,i_0}}$ for $\lambda_j = \lambda_{i_0}$, we can also see that $\|\bx_{K+1}\|^2\rightarrow \eta^2$ as $K\to\infty$ implies 
$$
(\bx_{K+1,i_0})^2\sum_{\lambda_j=\lambda_{i_0}} \Big(\frac{\bx_{0,j}}{\bx_{0,i_0}}\Big)^2\rightarrow \Big(\frac{\gamma}{\gamma+\lambda_{i_0}}\Big)^2\ \text{ as }\ K\to\infty.
$$
Let $m_{i_0} =\Big(\sum_{\lambda_j=\lambda_{i_0}} (\frac{\bx_{0,j}}{\bx_{0,i_0}})^2\Big)^{\frac{1}{2}}\geq 1$.
Then, as $K\rightarrow\infty$,
\begin{equation}\label{newton:convx}
(\bx_{K+1,i_0})^2\rightarrow \frac{1}{m_{i_0}^2}\Big(\frac{\gamma}{\gamma+\lambda_{i_0}}\Big)^2.
\end{equation}
This implies that $(\by_{K+1,i_0})^2\rightarrow\frac{1}{m_{i_0}^2}$ as $K\rightarrow\infty$. 

In addition, noting that for all $k$,
$$
1\geq |\by_k^T\by_{k+1}|=  \Big|\frac{\|\bx_k\|}{\|\bx_{k+1}\|} - (\by_k^T\by_{k+1})\Big|\Big|\sum_{j=1}^N\frac{\by_{k,j}^2}{\|\bx_k\|(1+\frac{\lambda_j}{\gamma})-1}\Big|,
$$
and $\lim_{k\to\infty}\Big|\sum_{j=1}^N\frac{\by_{k,j}^2}{\|\bx_k\|(1+\frac{\lambda_j}{\gamma})-1}\Big| = \infty$,
we know that 
$$
\lim_{k\rightarrow\infty}\Big|\frac{\|\bx_k\|}{\|\bx_{k+1}\|} - (\by_k^T\by_{k+1})\Big| = 0.
$$
Hence, not only do we have $\limsup_{k\to\infty}\by_k^T\by_{k+1} = 1$, but also we can obtain that $$\lim_{k\rightarrow\infty}\by_k^T\by_{k+1} = 1.$$
In fact, since $\bx_{k,j} = \bx_{k,i_0}(\frac{\bx_{0,j}}{\bx_{0,i_0}})$ for $\lambda_j=\lambda_{i_0}$, we can see that 
$$
\by_k^T\by_{k+1} = \frac{1}{\|\bx_k\|\|\bx_{k+1}\|}\Big(\sum_{\lambda_j=\lambda_{i_0}}\bx_{k,i_0}\bx_{k+1,i_0}\Big(\frac{\bx_{0,j}}{\bx_{0,i_0}}\Big)^2+\sum_{\lambda_j\neq\lambda_{i_0}}\bx_{k,j}\bx_{k+1,j}\Big),
$$
and that $\lim_{k\rightarrow\infty}\by_k^T\by_{k+1} = 1$ implies
$$
\lim_{k\rightarrow\infty}(\bx_{k,i_0}\bx_{k+1,i_0}) = \frac{1}{m_{i_0}^2}\Big(\frac{\gamma}{\gamma+\lambda_{i_0}}\Big)^2.
$$
Together with \eqref{newton:convx}, we know that $\lim_{k\to\infty}\bx_{k,i_0}$ exists and is either $\frac{1}{m_{i_0}}\frac{\gamma}{\gamma+\lambda_{i_0}}$ or $-\frac{1}{m_{i_0}}\frac{\gamma}{\gamma+\lambda_{i_0}}$. Letting $\bx_{*,i_0} = \lim_{k\to\infty}\bx_{k,i_0}$, we have that $\bx_k$ converges to $\bx_*$, where
$$
\bx_{*,j} = \begin{cases}(\frac{\bx_{0,j}}{\bx_{0,i_0}})\bx_{*,i_0}, &\text{ for } \lambda_j=\lambda_{i_0},\\ \qquad 0,& \text{ for } \lambda_j\neq\lambda_{i_0}.\end{cases}
$$
Note that $\bx_*$ is an eigenvector of $A$ corresponding to $\lambda_{i_0}$ with norm $\|\bx_*\| = \eta = \frac{\gamma}{\gamma+\lambda_{i_0}}$. This finishes the first part of the theorem.

For the second part of the theorem, we generate a sequence $\{\bx_k\}_{k=1}^{k_0}$ for some $k_0\in\N$ and suppose that $\bx_{k_0}$ satisfies $\|\bx_{k_0}\|  =\frac{\gamma}{\gamma+\lambda_i}$, where $\lambda_i$ for some $1\leq i\leq N$ is an eigenvalue of $A$ with multiplicity $1$. If $\bx_{k_0}$ is not a critical point of $F_A$ and $\bx_{k_0,i}\neq 0$, then we have $|\bx_{k_0,i}|<\frac{\gamma}{\gamma+\lambda_i}$ and $\frac{1}{\|\bx_{k_0}\|}(\by_{k_0}^T\bx_{k_0+1}) =1$, which turns \eqref{newton2} into
$$
\frac{1}{\gamma}\begin{bmatrix}\lambda_1 & 0 & \cdots & 0\\ 0 & \lambda_2 & \cdots & 0\\ \vdots & \vdots & \ddots & \vdots\\ 0 & 0 & \cdots & \lambda_N\end{bmatrix}\begin{bmatrix}\bx_{k_0+1,1}\\ \vdots\\ \bx_{k_0+1,N}\end{bmatrix} =\frac{\lambda_i}{\gamma}\begin{bmatrix}\bx_{k_0+1,1}\\ \vdots\\ \bx_{k_0+1,N}\end{bmatrix}.
$$
Since $\lambda_i$ is of multiplicity 1, there exists a unique solution $\bx_{k_0+1} = \alpha \vec{e}_i$, where $\vec{e}_i$ is the standard basis element in $\R^N$ with $\vec{e}_{i,j} = \delta_{ij}$ and $\alpha = \frac{\gamma^2}{\bx_{k_0,i}(\gamma+\lambda_i)^2}$. Note that $|\alpha|>\frac{\gamma}{\gamma+\lambda_i}$ and $\bx_{k_0+1}$ is an eigenvector of $A$ corresponding to the eigenvalue $\lambda_i$, and yet is  not a critical point of $F_A$.

Since $\bx_{k_0+2}$ satisfies \eqref{newton2} with $\by_{k_0+1}=\pm\vec{e}_i$, we can see that for $1\leq j\leq N$,
\begin{equation}\label{newton_system}
\begin{cases}(\frac{\lambda_j}{\gamma} + 1 -\frac{1}{|\alpha|})\bx_{k_0+2,j} = \delta_{ij}(1-\frac{1}{|\alpha|}\bx_{k_0+2,i}), &\text{ if } \by_{k_0+1} = \vec{e}_i,\\
(\frac{\lambda_j}{\gamma} + 1 -\frac{1}{|\alpha|})\bx_{k_0+2,j}  = -\delta_{ij}(1+\frac{1}{|\alpha|}\bx_{k_0+2,i}), &\text{ if } \by_{k_0+1} = -\vec{e}_i.
\end{cases}
\end{equation}
Note that  $|\alpha| = \frac{\gamma}{\gamma+\lambda_j}$ is equivalent to $|\bx_{k_0,i}| = \frac{\gamma(\gamma+\lambda_j)}{(\gamma+\lambda_i)^2}$. Since $|\bx_{k_0,i}|<\frac{\gamma}{\gamma+\lambda_i}$, it is possible to have $|\alpha| = \frac{\gamma}{\gamma+\lambda_j}$ only if $j<i$.

Hence, if $|\bx_{k_0,i}|\neq \frac{\gamma(\gamma+\lambda_j)}{(\gamma+\lambda_i)^2}$ for $j< i$, then $|\alpha|\neq \frac{\gamma}{\gamma+\lambda_j}$ for $j<i$, and  \eqref{newton_system} is nonsingular and has a unique solution 
 $$
 \bx_{k_0+2,j} = \begin{cases}\pm\frac{\gamma}{\gamma+\lambda_i}, & \text{ if } j=i,\\ \quad 0, & \text{ if } j\neq i.\end{cases}
$$
depending on $\by_{k_0+1} = \pm\vec{e}_i$. That is, $\bx_{k_0+2}$ is a critical point of $F_A$, an eigenvector of $A$ corresponding to the eigenvalue $\lambda_i$ with norm $\|\bx_{k_0+2}\| = \frac{\gamma}{\gamma+\lambda_i}$ and the algorithm terminates.

On the other hand, if $|\bx_{k_0,i}|= \frac{\gamma(\gamma+\lambda_j)}{(\gamma+\lambda_i)^2}$ for some $j< i$, then $|\alpha| = \frac{\gamma}{\gamma+\lambda_j}$ and the system \eqref{newton_system} becomes singular and the algorithm terminates.\qed
\end{proof}

\begin{remark}
It is very interesting to note that in both of the gradient descent method and the Newton's method discussed above, the convergence of a generated sequence $\{\bx_k\}$ is confirmed by  the convergence of the sequence of their norms $\{\|\bx_k\|\}$, which hardly happens, in general. So, it would probably be worth further investigation in a subsequent work.
\end{remark}

\subsection{Some variants of the proposed framework}

We have seen in the previous sections how to solve the generalized eigenvalue problem \eqref{gep} $$A\bx =\lambda B\bx$$ with $A\in\symm_N(\R),\ B\in\symm_{N,p}(\R)$ in an unconstrained framework. In this section, we will proceed our discussion on some variants,  inspired by \eqref{newton1}, of our proposed framework including nonsymmetric cases, as well.

When $\bx_k$ converges to an eigenvalue $\bx_*$ of $A$ corresponding to an eigenvalue $\lambda_*$ via \eqref{newton1}, we know that $\gamma(1-\frac{1}{\|\bx_k\|})$ converges to $-\lambda_*$, hence \eqref{newton1} becomes
$$
(A-\lambda_*I + (\gamma+\lambda_*)\tilde{\by}\tilde{\by}^T)\bx = \gamma\tilde{\by}
$$
with $\tilde{\by} = \frac{\bx_*}{\|\bx_*\|}$. Hence, we may consider the following procedure.
\begin{EigvEst} Given $A\in\symm_N(\R)$, and an eigenvalue $\tilde{\lambda}$ of $A$, and $\gamma>0$ with $\gamma\neq- \tilde{\lambda}$, we choose $\bx_0$ uniformly at random from $S^{N-1}$ and solve for $\bx$,
\begin{equation}\label{newton:onestep}
(A-\tilde{\lambda}I + (\gamma+\tilde{\lambda})\bx_0\bx_0^T)\bx = \gamma \bx_0.
\end{equation}
\end{EigvEst}

In the case of $0$ being an eigenvalue of $A$, by setting $\tilde{\lambda}=0$, \eqref{newton:onestep} turns into $(A + \gamma \bx_0\bx_0^T)\bx = \gamma \bx_0$, which means that  \eqref{newton:onestep} is equivalent to
$$
(A-\tilde{\lambda}I + \gamma \bx_0\bx_0^T) \bx = \gamma \bx_0, \ \gamma\neq 0.
$$

Before proceeding our discussion, we want to mention a work \cite{N14} of G. Peters and J.H. Wilkinson, which was further explained in \cite{N1}. In \cite{N14}, the authors discussed an idea of computing an approximate eigenvector $x_\lambda$ when an approximate eigenvalue $\lambda$ is given, i.e., when $A-\lambda I$ is very ill-conditioned, or near singular, by considering
\begin{equation}\label{pw}
(A-\lambda I + \bx_i\bp^T)\bx = \bx_i,
\end{equation}
with a random vector $\bp$, inspired by the inverse iteration, i.e., by $(A-\lambda I)\bx_{i+1} = \frac{\bx_i}{\|\bx_i\|}$. The authors noticed that $A-\lambda I +\bx_i\bp^T$ can be well-conditioned and the solution to \eqref{pw} is nothing but a constant multiple of the solution to $(A-\lambda I)\bx=\bx_i$, but provided reasons why \eqref{pw} is not in their favor simply because $A-\lambda I + \bx_i\bp^T$ changes its form at every iteration making computations inefficient. However, with $\lambda$ fixed, even though a limit exists for the inverse iteration, the convergence is still linear.  Further discussions can be found in \cite{N15}, and the references therein, in relation to the (shifted) inverse iteration and the Rayleigh quotient iteration.

On the other hand, our concern is if we can make full use of the nonsingular system \eqref{newton:onestep}  to analyze quantitatively the error in eigenvector estimation regardless of the multiplicities of the corresponding eigenvalues helping understand the Newton's method \eqref{newton}. So, we will provide a series of results for the rest of this section.

\begin{proposition}\label{prop:newton1}
Suppose that $\tilde{\lambda}$ has multiplicity 1. With probability 1, the equation \eqref{newton:onestep} has a unique nonzero solution $\tilde{\bx}$ that is an eigenvector of $A$ corresponding to the eigenvalue $\tilde{\lambda}$.
\end{proposition}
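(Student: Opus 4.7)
The plan is to exploit the rank-one perturbation structure of $M:=A-\tilde{\lambda}I+(\gamma+\tilde{\lambda})\bx_0\bx_0^T$ together with the symmetry of $A$ and the multiplicity-one assumption, reducing the equation to a scalar one. Let $q$ be a unit eigenvector of $A$ with $Aq=\tilde{\lambda}q$; by hypothesis $\ker(A-\tilde{\lambda}I)=\operatorname{span}\{q\}$. Set $c:=q^T\bx_0$. Since $\bx_0$ is distributed uniformly on $S^{N-1}$ and $\{q^T\bx_0=0\}$ is the intersection of a hyperplane with the sphere (a set of measure zero in $S^{N-1}$), we have $c\neq 0$ with probability $1$. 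I will work throughout on this probability-$1$ event, together with the standing assumption $\gamma+\tilde{\lambda}\neq 0$.

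First I would establish uniqueness by a clean inner-product trick. Suppose $\bx$ satisfies $M\bx=\gamma\bx_0$. Taking the inner product of both sides with $q$ and using $q^T(A-\tilde{\lambda}I)=((A-\tilde{\lambda}I)q)^T=0$ (which is where the symmetry of $A$ enters crucially), we obtain
$$
(\gamma+\tilde{\lambda})(\bx_0^T\bx)\,c=\gamma c,
$$
so, since $c\neq 0$, we conclude $\bx_0^T\bx=\gamma/(\gamma+\tilde{\lambda})$. Substituting this scalar identity back into the original equation gives
$$
(A-\tilde{\lambda}I)\bx=\gamma\bx_0-(\gamma+\tilde{\lambda})\cdot\frac{\gamma}{\gamma+\tilde{\lambda}}\bx_0=0,
$$
so $\bx\in\ker(A-\tilde{\lambda}I)=\operatorname{span}\{q\}$. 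Writing $\bx=\alpha q$ and re-using $\bx_0^T\bx=\alpha c=\gamma/(\gamma+\tilde{\lambda})$ forces $\alpha=\gamma/((\gamma+\tilde{\lambda})c)$, so the solution, if it exists, is unique, nonzero, and an eigenvector of $A$ for the eigenvalue $\tilde{\lambda}$.

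For existence, I would simply verify directly that $\tilde{\bx}:=\frac{\gamma}{(\gamma+\tilde{\lambda})\,c}\,q$ solves \eqref{newton:onestep}: $(A-\tilde{\lambda}I)\tilde{\bx}=0$ and $(\gamma+\tilde{\lambda})\bx_0\bx_0^T\tilde{\bx}=(\gamma+\tilde{\lambda})(\bx_0^T\tilde{\bx})\bx_0=\gamma\bx_0$. Combining the two halves, on the probability-$1$ event $\{c\neq 0\}$ the matrix $M$ is invertible, the unique solution is $\tilde{\bx}$, and $\tilde{\bx}$ is an eigenvector of $A$ corresponding to $\tilde{\lambda}$.

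There is no genuine obstacle in this argument; the only subtleties are making sure (i) the symmetry of $A$ is used to kill the $q^T(A-\tilde{\lambda}I)$ term (otherwise the rank-one perturbation would not decouple so nicely), (ii) the multiplicity-one hypothesis is invoked at exactly the step where one concludes $\bx\in\operatorname{span}\{q\}$, and (iii) the probability-$1$ statement is justified by the fact that $q^\perp\cap S^{N-1}$ has zero measure with respect to the uniform distribution on $S^{N-1}$.
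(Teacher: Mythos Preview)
Your proof is correct and follows essentially the same route as the paper: both hinge on left-multiplying by $q^T$ and using symmetry of $A$ to kill the $q^T(A-\tilde\lambda I)$ term, then invoking the multiplicity-one assumption. The only difference is organizational: the paper first shows $\ker M=\{0\}$ (hence $M$ is invertible, giving existence and uniqueness at once) and then argues the solution lies in $\ker(A-\tilde\lambda I)$, whereas you first pin down any solution as the explicit vector $\tilde\bx=\frac{\gamma}{(\gamma+\tilde\lambda)c}\,q$ and then verify it directly; your version has the small bonus of exhibiting the solution in closed form.
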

\begin{proof}
Let $q$ be a unit eigenvector of $A$ corresponding to $\tilde{\lambda}$. Note that if we choose $\bx_0\in S^{N-1}$ uniformly at random, then we have $q^T\bx_{0}\neq 0$ with probability 1. Moreover, if $q^T\bx_0\neq 0$, then $(A -\tilde{\lambda}I + (\gamma+\tilde{\lambda})(\bx_0\bx_0^T))\bz=0$ implies $\bx_0^T\bz= 0$. Hence, we have $A\bz=\tilde{\lambda}\bz$. Since $\tilde{\lambda}$ has multiplicity 1, $\bz=aq$ for some $a\in\R$. In addtion,  $0= \bz^T\bx_0 = aq^T\bx_0$  implies $a=0$. That is, $\bz=0$. Hence, $A -\tilde{\lambda}I + (\gamma+\tilde{\lambda})(\bx_0\bx_0^T)$ is nonsingular and there exists a unique nonzero solution $\tilde{\bx}$ to \eqref{newton:onestep}. By multiplying \eqref{newton:onestep} by $q^T$, we have $(\gamma+\tilde{\lambda})\bx_0^T\tilde{\bx} = \gamma$, which implies that $\tilde{\bx}$ also satisfies 
$$
(A-\tilde{\lambda}I)\tilde{\bx} = 0.
$$
\qed\end{proof}

If the multiplicity of an eigenvalue $\tilde{\lambda}$ is greater than 1, then \eqref{newton:onestep} becomes singular and Proposition~\ref{prop:newton1} does not apply. However, when the multiplicity $m>1$ is known, we can construct another nonsingular system.
\begin{corollary}\label{cor:newton}
Suppose that an eigenvalue $\tilde{\lambda}$ of $A$ has multiplicity $m>1$. We choose $\bx_0,\dots, \bx_{m-1}$ uniformly at random from $S^{N-1}$ and set an $N\times m$ matrix $X_0 = [\bx_0\ \cdots \bx_{m-1}]$. With probability 1, the equation 
$$
(A-\tilde{\lambda}I + (\gamma+\tilde{\lambda})X_0X_0^T)X = \gamma X_0
$$
has a unique nonzero solution $\tilde{X}=[\tilde{\bx}_0\ \cdots \tilde{\bx}_{m-1}]$, an $N\times m$ matrix, whose columns $\tilde{\bx}_0,\dots,\tilde{\bx}_{m-1}$ constitute a basis for the eigenspace corresponding to the eigenvalue $\tilde{\lambda}$.
\end{corollary}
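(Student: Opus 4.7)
The plan is to imitate the proof of Proposition~\ref{prop:newton1}, replacing the single eigenvector $q$ by an orthonormal basis of the $m$-dimensional eigenspace. Let $V$ denote the eigenspace of $A$ corresponding to $\tilde{\lambda}$, let $Q \in {\bf M}_{N \times m}(\R)$ be a matrix whose columns form an orthonormal basis for $V$, and note that since $A$ is symmetric, $(A - \tilde{\lambda} I) Q = 0$ and hence $Q^T (A - \tilde{\lambda} I) = 0$.

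First I would establish a probabilistic fact: with probability 1, the $m \times m$ matrix $Q^T X_0$ is nonsingular. Its entries are $\langle q_i, \bx_j \rangle$ for $1 \leq i, j \leq m$, and its determinant is a nonzero polynomial in the coordinates of $\bx_0, \dots, \bx_{m-1}$ (e.g.\ choose the $\bx_j$ to equal the $q_j$ to see nonvanishing), so its zero set has measure zero on $(S^{N-1})^m$. I will work on the probability-one event that $Q^T X_0$ is invertible.

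Next, I would show that the system matrix $M := A - \tilde{\lambda} I + (\gamma + \tilde{\lambda}) X_0 X_0^T$ is nonsingular. Suppose $M \bz = 0$. Multiplying by $Q^T$ and using $Q^T (A - \tilde{\lambda} I) = 0$ along with $\gamma + \tilde{\lambda} \neq 0$ yields $Q^T X_0 (X_0^T \bz) = 0$; invertibility of $Q^T X_0$ then forces $X_0^T \bz = 0$. Plugging this back into $M \bz = 0$ gives $(A - \tilde{\lambda} I) \bz = 0$, so $\bz = Q \ba$ for some $\ba \in \R^m$. Finally $0 = X_0^T \bz = X_0^T Q \ba = (Q^T X_0)^T \ba$ forces $\ba = 0$, hence $\bz = 0$. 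Thus $M$ is invertible and $\tilde{X} = M^{-1} (\gamma X_0)$ is the unique solution.

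It remains to verify that the columns of $\tilde{X}$ form a basis for $V$. Multiplying the defining equation $M \tilde{X} = \gamma X_0$ on the left by $Q^T$ and using $Q^T(A-\tilde{\lambda} I) = 0$ gives
$$
(\gamma + \tilde{\lambda})\, Q^T X_0 \, X_0^T \tilde{X} = \gamma\, Q^T X_0,
$$
and cancelling the nonsingular factor $Q^T X_0$ yields $X_0^T \tilde{X} = \frac{\gamma}{\gamma + \tilde{\lambda}} I_m$. Substituting back into $M \tilde{X} = \gamma X_0$ leaves $(A - \tilde{\lambda} I) \tilde{X} = 0$, so every column of $\tilde{X}$ lies in $V$. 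Moreover, $X_0^T \tilde{X}$ being a nonzero scalar multiple of $I_m$ forces $\tilde{X}$ to have rank $m$, so its $m$ columns are linearly independent and constitute a basis for $V$. The only non-routine step is the probabilistic claim that $Q^T X_0$ is generically invertible, but this follows immediately from the polynomial-vanishing argument above; the rest is a direct linear-algebraic adaptation of Proposition~\ref{prop:newton1}.
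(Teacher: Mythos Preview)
Your proof is correct and follows exactly the approach the paper intends: the corollary is stated without proof as the straightforward matrix generalization of Proposition~\ref{prop:newton1}, and your argument carries out precisely that generalization, replacing the scalar condition $q^T\bx_0\neq 0$ by the invertibility of $Q^TX_0$ and repeating the nullspace and cancellation steps column-wise.
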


Moreover, Proposition~\ref{prop:newton3} below says that, regardless of an eigenvalue's multiplicity and of the symmetry of a matrix, a good estimate of the eigenvalue guarantees a good estimate of a corresponding eigenvector through the nonsingular linear system \eqref{newton:onestep}.

\begin{proposition}\label{prop:newton3}
Let $A\in{\bf M}_N(\R)$ be diagonalizable. Suppose that $\tilde{\lambda}$ is an eigenvalue of $A$. Let  $\gamma$ be such that $\gamma>0$ and $\gamma\neq -\tilde{\lambda}$.
\begin{enumerate}
\item If $\Ni(A-\tilde{\lambda}I)$, the null space of $A-\tilde{\lambda}I$, is of dimension $1$, then choosing  $\bx_0\in S^{N-1}$ uniformly at random, \eqref{newton:onestep}
has a unique nonzero solution $\tilde{\bx}$ with probability $1$, which is an eigenvector of $A$ corresponding to $\tilde{\lambda}$.
\item If $\Ni(A-\tilde{\lambda}I)$ has dimension greater that $1$, then choosing  $\bx_0\in S^{N-1}$ uniformly at random, we can see, with probability $1$,  that for $\lambda$ close enough to $\tilde{\lambda}$, 
\begin{equation}\label{newton:nonsymmetric}
(A -\lambda I +(\gamma +\lambda) \bx_0\bx_0^T)\bx = \gamma \bx_0
\end{equation}
has a unique nonzero solution $\bx_\lambda$  satisfying
$$
\eta_1|\lambda-\tilde{\lambda}|< \|\bx_\lambda-\tilde{\bx}\| < \eta_2|\lambda-\tilde{\lambda}|
$$
for some $0<\eta_1<\eta_2<\infty$, where $\tilde{\bx}\in\Ni(A-\tilde{\lambda}I)$ is uniquely determined by $\bx_0$ and $\eta_1, \eta_2$ do not depend on $\lambda$.
\end{enumerate}
\end{proposition}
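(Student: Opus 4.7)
For part~1, the approach mirrors the proof of Proposition~\ref{prop:newton1} but replaces the symmetric eigenvector by a biorthogonal pair. Since $A$ is diagonalizable with $\dim\Ni(A-\tilde\lambda I)=1$, let $q$ span $\Ni(A-\tilde\lambda I)$ and choose a left eigenvector $p$ satisfying $p^TA=\tilde\lambda p^T$ and $p^Tq\neq 0$. Two genericity events on a random $\bx_0\in S^{N-1}$ hold with probability~$1$: $p^T\bx_0\neq 0$ and $q^T\bx_0\neq 0$. Suppose $(A-\tilde\lambda I+(\gamma+\tilde\lambda)\bx_0\bx_0^T)\bz=0$; multiplying by $p^T$ and using $p^T(A-\tilde\lambda I)=0$ yields $(\gamma+\tilde\lambda)(p^T\bx_0)(\bx_0^T\bz)=0$, so $\bx_0^T\bz=0$. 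Then $(A-\tilde\lambda I)\bz=0$ forces $\bz=cq$, and $0=\bx_0^T\bz=c(\bx_0^Tq)$ gives $c=0$. The same multiplication-by-$p^T$ trick applied to \eqref{newton:onestep} gives $\bx_0^T\tilde\bx=\gamma/(\gamma+\tilde\lambda)$, which reduces \eqref{newton:onestep} to $(A-\tilde\lambda I)\tilde\bx=0$.

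For part~2, let $V=\Ni(A-\tilde\lambda I)$ and let $W$ be the complementary $A$-invariant subspace provided by diagonalizability (the direct sum of the remaining eigenspaces). Denote by $P_V,P_W$ the oblique projections along this decomposition and write $\bx=\bx^V+\bx^W$, $\bx_0=\bx_0^V+\bx_0^W$. The system \eqref{newton:nonsymmetric} decouples along $V\oplus W$ after introducing $s:=\gamma-(\gamma+\lambda)\bx_0^T\bx$: the $V$-component reads $(\tilde\lambda-\lambda)\bx^V=s\bx_0^V$, while the $W$-component reads $(A_W-\lambda I_W)\bx^W=s\bx_0^W$, where $A_W$ is the restriction of $A$ to $W$. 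For $\lambda$ near $\tilde\lambda$ with $\lambda\neq\tilde\lambda$, both blocks are invertible, and back-substitution into the definition of $s$ gives the scalar equation whose unique solution is
\[
s=\frac{\gamma(\tilde\lambda-\lambda)}{D(\lambda)},\quad D(\lambda):=(\tilde\lambda-\lambda)+(\gamma+\lambda)\alpha+(\gamma+\lambda)(\tilde\lambda-\lambda)\beta_\lambda,
\]
with $\alpha:=\bx_0^T\bx_0^V$ and $\beta_\lambda:=\bx_0^T(A_W-\lambda I_W)^{-1}\bx_0^W$. With probability~$1$, $\alpha\neq 0$ and $\bx_0^W\neq 0$; hence $D(\lambda)\to(\gamma+\tilde\lambda)\alpha\neq 0$, so the unique solution $\bx_\lambda=\tfrac{s}{\tilde\lambda-\lambda}\bx_0^V+s(A_W-\lambda I_W)^{-1}\bx_0^W$ is well-defined for $\lambda$ sufficiently close to $\tilde\lambda$.

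Passing to the limit identifies $\tilde\bx=\tfrac{\gamma}{(\gamma+\tilde\lambda)\alpha}\bx_0^V\in V$, which is an eigenvector of $A$ corresponding to $\tilde\lambda$ uniquely determined by $\bx_0$. A direct calculation gives
\[
\frac{s}{\tilde\lambda-\lambda}-\frac{\gamma}{(\gamma+\tilde\lambda)\alpha}=\frac{\gamma(\tilde\lambda-\lambda)[\alpha-1-(\gamma+\lambda)\beta_\lambda]}{D(\lambda)(\gamma+\tilde\lambda)\alpha},
\]
so this prefactor and $s$ itself are both $O(|\tilde\lambda-\lambda|)$, yielding the upper bound $\|\bx_\lambda-\tilde\bx\|<\eta_2|\tilde\lambda-\lambda|$ via the triangle inequality.

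The main obstacle is the lower bound, which requires ruling out cancellation between the $V$- and $W$-parts of $\bx_\lambda-\tilde\bx$. The key idea is that $\bx_\lambda-\tilde\bx=u_\lambda+w_\lambda$ with $u_\lambda\in V$ and $w_\lambda\in W$, and since $V,W$ are complementary subspaces of a finite-dimensional space, norm equivalence on $V\times W$ produces a constant $c_1>0$ with $\|u_\lambda+w_\lambda\|\geq c_1\|w_\lambda\|$. Since $w_\lambda=s(A_W-\lambda I_W)^{-1}\bx_0^W$ with $|s|\geq C|\tilde\lambda-\lambda|$ (because $D(\lambda)$ stays bounded) and $(A_W-\lambda I_W)^{-1}\bx_0^W$ converges to a nonzero vector as $\lambda\to\tilde\lambda$, we obtain $\|w_\lambda\|\geq C'|\tilde\lambda-\lambda|$, hence $\eta_1|\tilde\lambda-\lambda|<\|\bx_\lambda-\tilde\bx\|$. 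The remaining subtlety is organizational: because $A$ need not be symmetric, one must consistently work with oblique projections and use both left and right eigenvectors, while verifying that all genericity conditions on $\bx_0$ are simultaneously met with probability~$1$.
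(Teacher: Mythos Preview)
Your proof is correct and follows essentially the same architecture as the paper's: both decompose $\R^N$ into the $\tilde\lambda$-eigenspace and the complementary $A$-invariant subspace, introduce the scalar $s=\alpha_\lambda=\gamma-(\gamma+\lambda)\bx_0^T\bx_\lambda$ to reduce the affine system to a one-parameter problem, identify the same limit $\tilde\bx=\tfrac{\gamma}{(\gamma+\tilde\lambda)\alpha}\bx_0^V$, and obtain the upper bound by the same first-order expansion of $s/(\tilde\lambda-\lambda)$. For part~1 the arguments coincide (your left eigenvector $p$ is the paper's $\tilde q$ spanning $\Ni(A^T-\tilde\lambda I)$). The one genuine difference is the lower bound in part~2: the paper builds an explicit test vector $\by_0$ orthogonal to $q_0$ with $\by_0^T(A_W-\tilde\lambda I_W)^{-1}\bx_0^W\neq 0$ and uses $\|\bx_\lambda-\tilde\bx\|\geq|\by_0^T(\bx_\lambda-\tilde\bx)|$, whereas you invoke boundedness of the oblique projection $P_W$ to get $\|\bx_\lambda-\tilde\bx\|\geq c_1\|(\bx_\lambda)^W\|$ directly. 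Your route is shorter and conceptually cleaner; the paper's is more hands-on and yields an explicit value for $\eta_1$.
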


\begin{proof}
The first part can be proven in the same way as we proved Proposition~\ref{prop:newton1} with a choice of $\bx_0\in S^{N-1}$ satisfying $q^T\bx_0\neq 0$ and $\tilde{q}^T\bx_0\neq 0$, where $q$ and $\tilde{q}$ are unit vectors spanning $\Ni(A-\tilde{\lambda}I)$ and $\Ni(A^T-\tilde{\lambda}I)$, respectively. 

Note that $A-\lambda I$ for $0<|\lambda -\tilde{\lambda}| <\min_{\lambda_j\neq \tilde{\lambda}}(|\lambda_j-\tilde{\lambda}|)$, is invertible and that if $\bx_0\in S^{N-1}$ is chosen uniformly at random, then $\bx_0$ is not orthogonal to $\Ni(A-\tilde{\lambda}I)$ with probability $1$. Moreover, since $\R^N = \eigspace(\tilde{\lambda})\oplus V$ with $V = \bigoplus_{\lambda_j\neq\tilde{\lambda}}\eigspace(\lambda_j)$, representing $\bx_0=q_0+r_0$ uniquely in $\R^N$, where $q_0\in\eigspace(\tilde{\lambda}) = \Ni(A-\tilde{\lambda}I)$ and $r_0\in V$, we have $q_0^T\bx_0\neq 0$ with probability $1$.  We also note that for any $0<\delta <\min_{\lambda_j\neq \tilde{\lambda}}(|\lambda_j-\tilde{\lambda}|)$, there exists $K>0$ such that 
\begin{equation}\label{nondiagonal:restriction}
\sup_{\lambda\in[\tilde{\lambda}-\delta,\tilde{\lambda}+\delta]}\|(A-\lambda I)^{-1}\|_V < K,
\end{equation}
where $\|(A-\lambda I)^{-1}\|_V$ is the operator norm of the restriction $(A-\lambda I)^{-1} :V \rightarrow V$, which is not difficult to see since $V$ is invariant under $(A-\lambda I)^{-1}$ and $(A-\lambda I)^{-1}:V\to V$ is invertible and $\|(A-\lambda I)^{-1}\|_V$ is continuous in $\lambda$ for  $\lambda\in[\tilde{\lambda}-\delta,\tilde{\lambda}+\delta]$.

So, we will fix $0<\delta<\min_{\lambda_j\neq \tilde{\lambda}}(|\lambda_j-\tilde{\lambda}|,\frac{|\gamma+\tilde{\lambda}|}{2})$ and consider $(A-\lambda I)^{-1}$ as being restricted to $V$ for $\lambda\in[\tilde{\lambda}-\delta,\tilde{\lambda}+\delta]$.

Firstly, with such an $\bx_0$, we will confirm that for $0<|\lambda-\tilde{\lambda}| < \delta$,
\begin{equation}\label{nondiag:uniq}
(A-\lambda I)\bx = (\gamma \bx_0-(\gamma+\lambda)\bx_0\bx_0^T\bx)
\end{equation}
has a unique solution $\bx_\lambda$. If there is a solution $\bx_\lambda$, then letting $$\alpha_\lambda = \gamma - (\gamma +\lambda)\bx_0^T\bx_\lambda,$$ we can see that \eqref{nondiag:uniq} becomes $(A-\lambda I)\bx_\lambda = \alpha_\lambda \bx_0$, i.e., $\bx_\lambda = \alpha_\lambda(A-\lambda I)^{-1}\bx_0$. Hence, the unique existence of $\alpha_\lambda$ confirms the unique existence of $\bx_\lambda$. Note that $\alpha_\lambda$ must satisfy
$$
\alpha_\lambda = \gamma - \alpha_\lambda(\gamma +\lambda)\bx_0^T(A-\lambda I)^{-1}\bx_0\ \Leftrightarrow\ \alpha_\lambda = \frac{\gamma}{1+(\gamma+\lambda)\bx_0^T(A-\lambda I)^{-1}\bx_0}.
$$
Together with \eqref{nondiagonal:restriction}, it is not difficult to see that for $\lambda\in[\tilde{\lambda}-\delta,\tilde{\lambda}+\delta]$,
$$
\frac{\|r_0\|}{\|A-\tilde{\lambda} I\|_V+\delta}<\|(A-\lambda I)^{-1}r_0\| < K\|r_0\|.
$$
Since $(A-\lambda I)^{-1}q_0 = \frac{1}{(\tilde{\lambda}-\lambda)}q_0$, we have
\begin{align*}
\|\bx_0^T(A-\lambda I)^{-1}\bx_0\| =&\ \Big\|\frac{1}{(\tilde{\lambda}-\lambda)}q_0^T\bx_0+\bx_0^T(A-\lambda I)^{-1}r_0\Big\|\\
\geq&\ \frac{|q_0^T\bx_0|}{|\tilde{\lambda}-\lambda|} - \|(A-\lambda I)^{-1}r_0\|>\frac{|q_0^T\bx_0|}{|\tilde{\lambda}-\lambda|} - K\|r_0\|.
\end{align*}
Noting that $|\gamma+\lambda|>\frac{|\gamma+\tilde{\lambda}|}{2}$ for $0<|\lambda-\tilde{\lambda}|<\delta$, we can see that if 
$$
0<|\tilde{\lambda}-\lambda| < \delta_0:=\min\Big(\delta, \frac{|q_0^T\bx_0|}{\beta}\Big),$$
where $\beta = \max(6K\|r_0\|, K\|r_0\|+\frac{2}{|\gamma+\tilde{\lambda}|})$, then $\|(\gamma+\lambda)\bx_0^T(A-\lambda I)^{-1}\bx_0\|>1$ and $\alpha_\lambda$ exists and  we can easily see  that the unique nonzero solution $\bx_\lambda$ to \eqref{nondiag:uniq} is represented as 
$$
\bx_\lambda = \alpha_\lambda(A-\lambda I)^{-1}\bx_0=\frac{\gamma(A-\lambda I)^{-1}\bx_0}{(1+(\gamma+\lambda)\bx_0^T(A-\lambda I)^{-1}\bx_0)}
$$
and that
\begin{align*}
&\ \Big|\frac{\alpha_\lambda}{\tilde{\lambda}-\lambda} - \frac{\gamma}{(\gamma+\tilde{\lambda})q_0^T\bx_0}\Big|\\
=&\ \Big|\frac{\gamma(q_0^T\bx_0 -(1+(\gamma+\lambda)\bx_0^T(A-\lambda I)^{-1}r_0))}{[(\lambda-\tilde{\lambda})(1+(\gamma+\lambda)\bx_0^T(A-\lambda I)^{-1}r_0)+(\gamma+\lambda)q_0^T\bx_0](\gamma+\tilde{\lambda})q_0^T\bx_0}\Big||\lambda-\tilde{\lambda}|\\
<&\ \frac{\gamma(|q_0^T\bx_0| +1+K(|\gamma+\tilde{\lambda}|+\delta_0)\|r_0\|)}{|(\gamma+\tilde{\lambda})q_0^T\bx_0|((|\gamma+\tilde{\lambda}|-\delta_0)|q_0^T\bx_0|-|\lambda-\tilde{\lambda}|(|\gamma+\tilde{\lambda}|+\delta_0)K\|r_0\|)}|\lambda-\tilde{\lambda}|.
\end{align*}
Since $|\gamma+\tilde{\lambda}|-\delta_0 > |\gamma+\tilde{\lambda}|/2$ and $|\gamma+\tilde{\lambda}|+\delta_0 < 3|\gamma+\tilde{\lambda}|/2$ and $\delta_0\leq \frac{|q_0^T\bx|}{6K\|r_0\|}$, we obtain that for $0<|\lambda-\tilde{\lambda}|<\delta_0$,
$$
(|\gamma+\tilde{\lambda}|-\delta_0)|q_0^T\bx_0|-|\lambda-\tilde{\lambda}|(|\gamma+\tilde{\lambda}|+\delta_0)K\|r_0\| > \frac{|(\gamma+\tilde{\lambda})q_0^T\bx_0|}{4}
$$
implying that
$$
\Big|\frac{\alpha_\lambda}{\tilde{\lambda}-\lambda} - \frac{\gamma}{(\gamma+\tilde{\lambda})q_0^T\bx_0}\Big|<\omega|\lambda-\tilde{\lambda}|,
$$
where 
$$
\omega = \frac{4\gamma(|q_0^T\bx_0| +1+K(3|\gamma+\tilde{\lambda}|/2)\|r_0\|)}{|(\gamma+\tilde{\lambda})q_0^T\bx_0|^2}
$$
is determined by $\bx_0$. If we set $\tilde{\bx} = \frac{\gamma}{(\gamma+\tilde{\lambda})(\bx_0^Tq_0)}q_0$, then for $0<|\lambda-\tilde{\lambda}|<\delta_0$,
\begin{align*}
\|\bx_\lambda -\tilde{\bx}\|  =&\ \Big\|\frac{\gamma (A-\lambda I)^{-1}\bx_0}{(1+(\gamma+\lambda)\bx_0^T(A-\lambda I)^{-1}\bx_0)} -\frac{\gamma}{(\gamma+\tilde{\lambda})(\bx_0^Tq_0)}q_0\Big\|\\
\leq&\ \Big\|\Big(\frac{\alpha_\lambda}{\tilde{\lambda}-\lambda} -\frac{\gamma}{(\gamma+\tilde{\lambda})(\bx_0^Tq_0)}\Big)q_0\Big\| + \|\alpha_\lambda(A-\lambda I)^{-1}r_0\|\\
<&\ \eta_2 |\lambda-\tilde{\lambda}|
\end{align*}
where $\eta_2 = \omega\|q_0\| + K\|r_0\|(\frac{\gamma}{|(\gamma+\tilde{\lambda})\bx_0^Tq_0|} + \omega \delta_0)$. 

On the the hand, we let $z_0\in V$ be such that $\bz_0 = (A-\tilde{\lambda}I)^{-1}r_0$, i.e., $(A-\tilde{\lambda})\bz_0 = r_0$ and set $\by = \bz_0 - (\frac{\bz_0^Tq_0}{\|q_0\|^2})q_0$ and $\by_0:=\frac{\by}{\|\by\|}$.  Then, it is not difficult to see that $\by_0^T\bz_0\neq 0$ and $\by_0^Tq_0 =0$ and 
$$
|\by_0^T(A-\lambda I)^{-1}r_0 - \by_0^T\bz_0| = |(\lambda-\tilde{\lambda})\by_0^T(A-\tilde{\lambda})^{-1}(A-\lambda I)^{-1}r_0|\leq K^2\|r_0\||\lambda-\tilde{\lambda}|,
$$
which implies that there exists $\delta_1>0$ such that for $0<|\lambda-\tilde{\lambda}|<\delta_1$, we have
$$
|\by_0^T(A-\lambda I)^{-1}r_0|>\frac{|\by_0^T\bz_0|}{2}\quad\text{and}\quad |\alpha_\lambda| > \frac{\gamma|\lambda-\tilde{\lambda}|}{2|(\gamma+\tilde{\lambda})\bx_0^Tq_0|}.
$$
i.e.,
$$
\|\bx_\lambda -\tilde{\bx}\| \geq |\by_0^T(\bx_\lambda-\tilde{\bx})| = |\alpha_\lambda \by^T(A-\lambda I)^{-1}r_0|> \eta_1|\lambda-\tilde{\lambda}|,
$$
with $\eta_1 =\frac{\gamma}{4}\Big|\frac{\by_0^T\bz_0}{(\gamma+\tilde{\lambda})\bx_0^Tq_0}\Big|>0$. Therefore, $0<|\lambda-\tilde{\lambda}|<\min(\delta_0,\delta_1)$ impilies
$$
\eta_1\lambda < \|\bx_\lambda -\tilde{\bx}\| < \eta_2\lambda.
$$
\qed\end{proof}

From the analysis about \eqref{newton:onestep}, we can see  a similarity between the Newton's methd \eqref{newton} and the Rayleigh quotient iteration. When generating a sequence $\{\bx_k\}$ via
$$
(A-\mu_{k} I + (\gamma+\mu_{k})\by_k\by_k^T )\bx_{k+1} = \gamma \by_k,
$$
where $\by_k = \frac{\bx_k}{\|\bx_k\|}$, we can see that \eqref{newton} is obtained with $\mu_k=\gamma(\frac{1}{\|\bx_k\|}-1)$ and the Rayleigh quotient method is obtained  with $\mu_k=\frac{\langle \bx_k, A\bx_k\rangle}{\|\bx_k\|^2}$. The same is true for solving $A\bx=\lambda B\bx$, i.e., if we use $\mu_k = \gamma(\frac{1}{\|\bx_k\|_B}-1)$, where $\|\bx_k\|_B=\sqrt{\langle \bx_k,\bx_k\rangle_B}$, we have the Newton's method \eqref{newton},
$$
\Big[\frac{1}{\gamma}A + \Big(1-\frac{1}{\|\bx_k\|_B}\Big)B + \frac{1}{\|\bx_k\|_B}\Big(\frac{B\bx_k}{\|\bx_k\|_B}\Big)\Big(\frac{B\bx_k}{\|\bx_k\|_B}\Big)^T\Big]\bx_{k+1} = \frac{B\bx_k}{\|\bx_k\|_B},
$$
whereas if we use $\mu_k = \frac{\langle \bx_k,A\bx_k\rangle}{\langle\bx_k,\bx_k\rangle_B}$, then we have the Rayleigh quotient method. We will present numerical experiments towards the end of this paper showing an interesting feature of our proposed framework, that is, our proposed framework tends to find the smallest eigenvalues due to the philosophy of our proposed framework where $F_{A,B}$ is to be minimized, whereas the Rayleigh quotient method tends to find the largest eigenvalues when both methods begin with a random initial point $\bx_0$.

\begin{remark}
We would like to mention that the same framework applies to solving eigenvalue problems involving complex Hermitian matrices and to finding singular values of $A\in{\bf M}_{M\times N}(\R)$. When it comes to singular values of $A$, we may consider either $A^TA$ or $AA^T$ in place of $A$ since $\lambda>0$ is a singular value of $A$ if and only if $\lambda^2>0$ is that of $A^TA$ or $AA^T$.
\end{remark}

\section{An eigenvalue problem on infinite dimensional spaces}
\label{sec:2}

It is interesting to see that the same framework as \eqref{eigeq:functional} applies to eigenvalue problems on infinite dimensional spaces such as the Sturm-Liouville eigenvalue problem, the eigenvalue problem of self-adjoint elliptic operators, etc. We will present one such application of finding an eigenfunction of a self-adjoint uniformly elliptic operator corresponding to the smallest eigenvalue, which is an infinite dimensional version of a real symmetric and positive definite matrix.

%
%

\subsection{Symmetric uniformly elliptic operators}
\label{subsec:9}


Let $\Omega$ be a bounded open subset of $\R^d$ with Lipschitz boundary $\partial\Omega$. We will denote by $L$ a symmetric uniformly elliptic operator defined by
\begin{equation}\label{eigeq:operator}
Lu = -\sum_{i,j=1}^d\partial_i(a_{i,j}\partial_j u) +cu,
\end{equation}
where $a_{i,j},c\in L^\infty(\Omega)$, $i,j=1,\dots,d$, are such that $a_{i,j}(x) = a_{j,i}(x)$ a.e., and $c(x)\geq 0$ a.e., and there exists $0<\alpha\leq \beta<\infty $ such that  for a.e. $x\in\Omega$ and for $\xi\in\R^d$, $$\alpha|\xi|^2\leq \sum_{i,j=1}^da_{i,j}(x)\xi_i\xi_j \leq \beta|\xi|^2.$$ The problem that we are interested in is to find $\varphi\in H_0^1(\Omega)$ that solves
\begin{equation}\label{eigpr:1}
\begin{cases}
L\varphi = \lambda \varphi,&\text{ in }\ \Omega,\\
\ \ \varphi = 0,&\text{ on } \partial\Omega.
\end{cases}
\end{equation}
It is known that the eigenvalues $\lambda_1,\lambda_2,\dots$ of $L$ are nonnegative and we may have them in an increasing order, that is, $$0< \lambda_1<\lambda_2\leq\cdots\leq \lambda_3\leq \cdots.$$
Therefore, a natural question to ask is to find the smallest eigenvalue $\lambda_1$ of $L$ and its corresponding eigenfuction. 

\begin{definition}
$\varphi\in H_0^1(\Omega)$ is a weak solution of \eqref{eigpr:1} if for any $\psi\in C_0^\infty(\Omega)$,
$$
\int_\Omega \sum_{i,j=1}^da_{i,j}(x)\partial_j \varphi(x)\partial_i \psi(x)dx + \int_\Omega (c(x)-\lambda)\varphi(x)\psi(x)dx =0.
$$
\end{definition}

Then, it is natural from the discussion in the previous sections that we want to define a functional $F_L$ with $\gamma>0$ by
\begin{align}\label{eigeq:functionalL}
F_L(u) =&\ \frac{1}{2}\int_\Omega \sum_{i,j=1}^da_{i,j}(x)\partial_ju(x)\partial_iu(x) dx + \frac{1}{2}\int_\Omega c(x)|u(x)|^2dx\\
&\  + \frac{\gamma}{2}\int_\Omega |u(x)|^2dx - \gamma\Big(\int_\Omega |u(x)|^2dx\Big)^{\frac{1}{2}}\nonumber
\end{align}
and solve the following minimization problem
\begin{equation}\label{eigeq:minimizeL}
\min_{u\in H_0^1(\Omega)}F_L(u)
\end{equation}
and investigate the relationship between \eqref{eigpr:1} and \eqref{eigeq:minimizeL}. The existence of a minimizer of the problem \eqref{eigeq:minimizeL} is obvious by the standard method using the compact embedding theorem by Rellich-Kondrachov. Moreover, we can observe the same characteristics of \eqref{eigeq:functionalL} as those of \eqref{eigeq:functional}. For theorems and lemmas that follow, we will omit their proofs because they are essentially the same as what we presented in the finite dimensional case.

\begin{lemma}\label{eiglem:3}
The set of nonzero critical points of $F_L$ is 
$$
\Big\{\varphi \in H_0^1(\Omega) : L\varphi = \lambda\varphi\text{ in }\Omega\ \text{ and }\ \|\varphi\|_{L^2(\Omega)} = \frac{\gamma}{\gamma+\lambda}\Big\}.
$$
and 
$$
\min_{u\in H_0^1(\Omega)}F_L(u) = -\frac{\gamma^2}{2(\gamma + \lambda_*)},
$$
where $\lambda_*>0$ is the smallest eigenvalue of $L$.
\end{lemma}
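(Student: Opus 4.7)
The plan is to transcribe the proof of Lemma~\ref{eiglem:1} into the infinite-dimensional setting, using the variational calculus on $H_0^1(\Omega)$ in place of the calculus on $\R^N$. First I would compute the Gâteaux derivative of $F_L$ at $u \in H_0^1(\Omega)$ with $u\neq 0$: for any $\psi\in C_0^\infty(\Omega)$,
$$
\langle F_L'(u),\psi\rangle = \int_\Omega \sum_{i,j=1}^d a_{i,j}\partial_j u\,\partial_i\psi\,dx + \int_\Omega\Big(c(x)+\gamma-\frac{\gamma}{\|u\|_{L^2(\Omega)}}\Big)u\psi\,dx.
$$
Setting this equal to zero for all test functions $\psi$ shows that $u$ is a weak solution of $L u = \lambda u$ with
$$
\lambda = \gamma\Big(\frac{1}{\|u\|_{L^2(\Omega)}}-1\Big),
\qquad \text{i.e.,}\qquad \|u\|_{L^2(\Omega)} = \frac{\gamma}{\gamma+\lambda}.
$$
Conversely, any such $\varphi$ clearly satisfies $F_L'(\varphi)=0$, which identifies the set of nonzero critical points precisely as stated.

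Next I would evaluate $F_L$ on such a critical point. Using the eigenvalue equation to replace the bilinear form by $\lambda\|\varphi\|_{L^2(\Omega)}^2$, a direct computation gives
$$
F_L(\varphi) = \frac{\gamma+\lambda}{2}\|\varphi\|_{L^2(\Omega)}^2 - \gamma\|\varphi\|_{L^2(\Omega)} = -\frac{\gamma^2}{2(\gamma+\lambda)},
$$
so $F_L$ takes only finitely many values on its critical set, indexed by the eigenvalues of $L$, and the smallest such value corresponds to the smallest eigenvalue $\lambda_*$.

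It remains to prove that a global minimizer exists and is nonzero. For this I would invoke the direct method: since $c\geq 0$ and $L$ is uniformly elliptic, the functional $F_L$ is coercive on $H_0^1(\Omega)$ (the quadratic part controls $\|u\|_{H_0^1}^2$ via Poincar\'e's inequality, while $\frac{\gamma}{2}\|u\|_{L^2}^2-\gamma\|u\|_{L^2}\geq -\gamma^2/2$). The quadratic part is convex and strongly continuous, hence weakly lower semicontinuous, and the term $-\gamma\|u\|_{L^2(\Omega)}$ is weakly \emph{continuous} on $H_0^1(\Omega)$ thanks to the Rellich--Kondrachov compact embedding $H_0^1(\Omega)\hookrightarrow L^2(\Omega)$. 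Therefore a minimizer $u_*$ exists. To see that $u_*\neq 0$, note that for any nontrivial $\psi\in H_0^1(\Omega)$ one has $F_L(t\psi)\sim -\gamma t\|\psi\|_{L^2(\Omega)}<0$ as $t\to 0^+$, so $F_L(0)=0$ is not the minimum value. Hence $u_*$ is a nonzero critical point, which by the first paragraph must be an eigenfunction of $L$ with eigenvalue $\lambda$ and $\|u_*\|_{L^2}=\gamma/(\gamma+\lambda)$; minimality forces $\lambda=\lambda_*$.

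The main obstacle, which is absent in the finite-dimensional proof, is justifying weak lower semicontinuity of $F_L$: only the Rellich--Kondrachov compactness saves the nonlinear, concave term $-\gamma\|u\|_{L^2(\Omega)}$ from spoiling lower semicontinuity. Once this compactness step is in hand, the remainder is a routine transcription of Lemma~\ref{eiglem:1}, which is why the author signals that the details are omitted. All the characterizations above also show that the map $\lambda\mapsto -\gamma^2/(2(\gamma+\lambda))$ is strictly increasing on $(0,\infty)$, so the minimum value is achieved precisely at $\lambda_*$, yielding the stated value $-\gamma^2/(2(\gamma+\lambda_*))$.
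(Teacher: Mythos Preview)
Your proposal is correct and follows precisely the approach the paper indicates: the paper omits the proof of Lemma~\ref{eiglem:3}, stating that it is ``essentially the same as what we presented in the finite dimensional case'' (i.e., Lemma~\ref{eiglem:1}), and separately notes that existence of a minimizer ``is obvious by the standard method using the compact embedding theorem by Rellich--Kondrachov.'' Your write-up is exactly this transcription---the Euler--Lagrange computation mirrors Lemma~\ref{eiglem:1}, and the existence argument via coercivity and Rellich--Kondrachov is what the paper has in mind---so there is nothing to add.
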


\begin{theorem}
Any local minimizer of \eqref{eigeq:functionalL} is a global minimizer.
\end{theorem}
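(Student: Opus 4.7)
The plan is to mirror the finite-dimensional argument of Theorem~\ref{eigthm:1} by exploiting the spectral theorem for the self-adjoint uniformly elliptic operator $L$. Under the hypotheses on $a_{i,j}$ and $c$, standard theory (via Rellich-Kondrachov compactness of $H_0^1(\Omega)\hookrightarrow L^2(\Omega)$ and the spectral theorem for compact self-adjoint operators applied to $L^{-1}$) guarantees an $L^2(\Omega)$-orthonormal basis $\{\varphi_k\}_{k\geq 1}\subset H_0^1(\Omega)$ of eigenfunctions of $L$ with eigenvalues $0<\lambda_1\leq\lambda_2\leq\cdots$. This plays the role of the eigenbasis used to diagonalize $A$ in the matrix case.

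First I would rule out $0$ as a local minimizer exactly as in Theorem~\ref{eigthm:1}: for any nonzero $\psi\in H_0^1(\Omega)$, a direct computation gives
$$
\lim_{t\to 0^+}\frac{F_L(t\psi)-F_L(0)}{t\,\|\psi\|_{L^2(\Omega)}} = -\gamma<0,
$$
so $0$ cannot be a local minimizer. Next, assuming $u_*$ is a nonzero local minimizer, Lemma~\ref{eiglem:3} says $u_*$ is an eigenfunction of $L$ corresponding to some eigenvalue $\lambda_*$ with $\|u_*\|_{L^2(\Omega)}=\gamma/(\gamma+\lambda_*)$. I would then argue by contradiction: suppose $\lambda_*>\lambda_1$, and pick an $L^2$-normalized eigenfunction $\varphi_1$ associated to $\lambda_1$.

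The central step is to restrict $F_L$ to the two-dimensional subspace $V=\operatorname{span}\{\varphi_1,\tfrac{u_*}{\|u_*\|_{L^2}}\}$ and reduce to exactly the computation carried out in Theorem~\ref{eigthm:1}. Since $\varphi_1$ and $u_*/\|u_*\|_{L^2}$ are eigenfunctions of the self-adjoint operator $L$ with different eigenvalues, they are orthogonal in $L^2(\Omega)$, and the symmetric bilinear form
$$
\mathcal{B}(u,v)=\int_\Omega\sum_{i,j=1}^d a_{i,j}\partial_j u\,\partial_i v\,dx + \int_\Omega c\,u\,v\,dx
$$
also diagonalizes on this pair, giving $\mathcal{B}(\varphi_1,u_*/\|u_*\|_{L^2})=0$. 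Therefore, writing $w=a\varphi_1+b\,u_*/\|u_*\|_{L^2}\in H_0^1(\Omega)$, one obtains
$$
F_L(w)=H(a,b):=\frac{1}{2}(\lambda_1+\gamma)a^2+\frac{1}{2}(\lambda_*+\gamma)b^2-\gamma\sqrt{a^2+b^2},
$$
which is precisely the $H$ from the finite-dimensional proof. Because $V$ is a (closed) subspace of $H_0^1(\Omega)$, the local minimality of $u_*$ in $H_0^1(\Omega)$ forces $(0,\gamma/(\gamma+\lambda_*))$ to be a local minimizer of $H$ on $\R^2$, so $\nabla^2 H$ at that point must be positive semidefinite. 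The determinant calculation carried out in the proof of Theorem~\ref{eigthm:1} yields $(\lambda_1-\lambda_*)(\gamma+\lambda_*)<0$, a contradiction. Hence $\lambda_*=\lambda_1$ and $u_*$ is a global minimizer by Lemma~\ref{eiglem:3}.

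I expect the only non-routine point to be the justification that the spectral decomposition of $L$ actually furnishes a complete orthonormal basis of $L^2(\Omega)$-eigenfunctions lying in $H_0^1(\Omega)$, together with the simultaneous diagonalization of $\mathcal{B}$, so that the reduction to the two-variable function $H(a,b)$ is clean. Once this is in place, the rest is a verbatim transcription of the finite-dimensional argument, with $L^2(\Omega)$ and $\mathcal{B}$ playing the roles of the Euclidean inner product and $\langle\,\cdot\,,A\,\cdot\,\rangle$, respectively, which is presumably why the author omits the proof.
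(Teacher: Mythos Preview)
Your proposal is correct and follows exactly the route the paper intends: it is a direct transcription of the finite-dimensional proof of Theorem~\ref{eigthm:1}, with the eigenbasis of $L$ replacing the diagonalization of $A$, Lemma~\ref{eiglem:3} replacing Lemma~\ref{eiglem:1}, and the same two-variable Hessian computation yielding the contradiction. This is precisely why the paper omits the proof, and your identification of the spectral decomposition of $L$ (with simultaneous diagonalization of $\mathcal{B}$) as the only point requiring justification beyond the finite-dimensional case is apt.
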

\subsubsection{Corresponding parabolic PDEs}
\label{subsec:10}

Corresponding to the uniformly elliptic PDEs of the form \eqref{eigpr:1}, we will consider the following parabolic PDE:  for $T\in(0,\infty)$, we solve
\begin{equation}\label{model:pde}
\begin{cases}
\frac{\partial u}{\partial t} &=\ -L u -\gamma\Big(1-\frac{1}{\|u(t)\|_2}\Big)u\ \text{ in }\ \Omega_T:= \Omega\times(0,T],\\
u &=\ 0\ \text{ on }\ \partial\Omega_T :=\partial\Omega\times [0,T],\\
u(0) &=\ u_0 \neq 0\text{ in } H_0^1(\Omega),
\end{cases}
\end{equation}
where $\|u(t)\|_2 = (\int_\Omega |u(x,t)|^2dx)^\frac{1}{2}$. Due to the condition $c\geq 0$ in $\Omega$, $L$ is positive definite. In the context, we will use $\langle\cdot,\cdot,\rangle$ for both inner products in $L^2(\Omega)$ and in $\R^N$ for $N\in\N$.
Note that the partial differential equation in \eqref{model:pde} is the formal gradient flow of a functional $F_L$ in \eqref{eigeq:functionalL}, i.e., 
$$
\frac{\partial u}{\partial t} = -\nabla F_L(u).
$$
\begin{theorem}\label{thm:parabolicpde}
The problem \eqref{model:pde} has a unique weak solution $u$ for any $T\in(0,\infty)$.
 Moreover, if  $\psi_1$ is an eigenfunction of $L$ corresponding to the smallest eigenvalue $\lambda_1>0$ with $\|\psi_1\|_2 = 1$, and if $\langle u_0,\psi_1\rangle\neq 0$, then the solution $u$ satisfies
$$
u(t) \rightarrow \frac{\gamma}{\gamma+\lambda_1}\vec{v} \text{ in } L^2(\Omega) \text{ as } t \rightarrow\infty\ \text{ for some } \vec{v}\in \{\pm\psi_1\}.
$$
\end{theorem}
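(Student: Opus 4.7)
My plan has two stages, mirroring the structure of the theorem: first establish well-posedness, then identify the asymptotic limit.

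For well-posedness, I would rewrite the PDE as $u_t + Lu = g(u)$ with $g(u) = \gamma(1/\|u\|_2 - 1)u$, viewing $L$ as the self-adjoint positive operator generating an analytic semigroup $e^{-tL}$ on $L^2(\Omega)$. The map $g$ is locally Lipschitz on any set $\{\|u\|_2 \geq \epsilon\}$, so either a Galerkin scheme in the eigenbasis of $L$ or standard semigroup mild-solution arguments give local existence and uniqueness as long as $\|u(t)\|_2$ stays bounded away from $0$. Global existence then reduces to uniform-in-$t$ control of $\|u(t)\|_2$ from both sides. The upper bound follows from the gradient-flow identity $\frac{d}{dt}F_L(u(t)) = -\|u_t\|_2^2 \leq 0$: using $\langle u,Lu\rangle\geq0$ in $F_L(u(t)) \leq F_L(u_0)$ one gets $(\|u\|_2-1)^2 \leq 1 + 2F_L(u_0)/\gamma$, hence $\|u(t)\|_2 \leq C$. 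For the lower bound, expand $u$ in the eigenbasis $\{\psi_k\}$ of $L$ and set $a_k(t) = \langle u(t),\psi_k\rangle$; testing the PDE against $\psi_k$ gives $a_k'(t) = a_k(t)\bigl(\gamma/\|u(t)\|_2 - (\lambda_k+\gamma)\bigr)$, so $|a_k(t)| \geq |a_k(0)|\exp\bigl((\gamma/C - \lambda_k - \gamma)t\bigr)$, which stays positive on any finite interval as long as some $a_k(0) \neq 0$, i.e.\ $u_0\neq 0$. Since $\|u(t)\|_2 \geq |a_k(t)|$, this rules out blow-down in finite time.

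For the asymptotic statement, I would proceed as in a Lyapunov/LaSalle analysis. Integrating $\frac{d}{dt}F_L(u) = -\|u_t\|_2^2$ against the lower bound from Lemma~\ref{eiglem:3} gives $\int_0^\infty \|u_t\|_2^2\,dt < \infty$, so there exists $t_n\to\infty$ with $u_t(t_n)\to 0$ in $L^2$. The $H^1$-bound from $F_L(u(t)) \leq F_L(u_0)$ combined with the Rellich--Kondrachov embedding gives relative $L^2$-compactness of $\{u(t)\}$, so along a further subsequence $u(t_n)\to u^*$ in $L^2$. Passing to the limit in $\nabla F_L(u(t_n)) = -u_t(t_n)\to 0$ shows $u^*$ is a nonzero critical point of $F_L$; Lemma~\ref{eiglem:3} then forces $u^* = c\psi$ for some eigenfunction $\psi$ corresponding to some eigenvalue $\lambda$, with $\|u^*\|_2 = \gamma/(\gamma+\lambda)$.

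The identification $\lambda = \lambda_1$ (and the sign of the limit) is the crux. The key computation is that the ratio $q(t) := a_1(t)^2/\|u(t)\|_2^2$ is monotone nondecreasing: a direct calculation using $a_1' = a_1(\gamma/\|u\|-\lambda_1-\gamma)$ and $\frac{d}{dt}\|u\|_2^2 = -2\langle u,Lu\rangle - 2\gamma\|u\|_2^2 + 2\gamma\|u\|_2$ yields
\begin{equation*}
\frac{d}{dt}q(t) = \frac{2a_1(t)^2\bigl(\langle u,Lu\rangle - \lambda_1\|u\|_2^2\bigr)}{\|u\|_2^4} \geq 0,
\end{equation*}
by the Rayleigh quotient inequality. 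Hence $q(t) \geq q(0) > 0$ whenever $\langle u_0,\psi_1\rangle \neq 0$, which forces $\langle u^*,\psi_1\rangle \neq 0$ and thus $\psi = \psi_1$, giving $u^* = \pm\tfrac{\gamma}{\gamma+\lambda_1}\psi_1$. The sign is determined because $a_1(t)$ never changes sign (its ODE is linear in $a_1$), so $\mathrm{sgn}(a_1(t)) = \mathrm{sgn}\langle u_0,\psi_1\rangle$. To upgrade from subsequential to full convergence, note that any other subsequential limit $u^{**}$ must also be a critical point with $F_L(u^{**}) = \lim F_L(u(t)) = F_L(u^*) = -\gamma^2/(2(\gamma+\lambda_1))$, and the sign-preservation of $a_1(t)$ forces $u^{**} = u^*$; together with compactness of the orbit this yields $u(t)\to u^*$ in $L^2$.

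The main obstacle I anticipate is the well-posedness step, specifically ensuring that $\|u(t)\|_2$ does not collapse to zero at any time — the $F_L$-energy alone does not preclude this, and the Fourier-projection argument above is what bridges the gap; verifying that it works rigorously (e.g.\ within a Galerkin approximation where everything is finite-dimensional and then passing to the limit with uniform lower bounds) will require the most care. The monotonicity of $q(t)$, by contrast, is a very clean algebraic identity that does the heavy lifting for the asymptotic part.
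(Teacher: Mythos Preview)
Your route differs substantially from the paper's. The paper builds the solution by Galerkin approximation in the eigenbasis of $L$: it first proves the finite-dimensional ODE analogue (Theorem~\ref{thm:existenceODE}), then obtains uniform-in-$N$ two-sided bounds on $\|u_N(t)\|_2$ via a splitting into low and high modes and an exponential estimate on their ratio, passes to the limit by a Cauchy-sequence argument, and reads off the asymptotics from explicit exponential decay of each high-mode coefficient. Your approach is more in the spirit of an abstract gradient-flow/LaSalle argument, and your monotone quantity $q(t)=a_1(t)^2/\|u(t)\|_2^2$ (the identity $\dot q = 2a_1^2(\langle u,Lu\rangle-\lambda_1\|u\|_2^2)/\|u\|_2^4\ge0$ is correct) is a genuinely cleaner device for pinning the limit to the $\lambda_1$-eigenspace than the paper's mode-by-mode bookkeeping. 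What the paper's approach buys, in exchange for its length, is explicit decay rates and a fully constructive existence proof.

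There is, however, a real gap in your argument as written. Your lower bound $|a_k(t)|\ge |a_k(0)|e^{(\gamma/C-\lambda_k-\gamma)t}$ is only $T$-dependent; it tends to $0$ as $t\to\infty$ and therefore does \emph{not} rule out $\|u(t_n)\|_2\to 0$ along a subsequence. Consequently the assertion that the subsequential limit $u^*$ is a \emph{nonzero} critical point is unjustified: if $u^*=0$ then $\nabla F_L$ is singular there and the whole LaSalle step collapses. The paper handles this by producing a uniform-in-$t$ lower bound $M_2>0$ on $\|u(t)\|_2$ (via a barrier argument on the low-mode mass $\varphi_{1,N}$). You can do the same, more simply, with your own tool: since $q(t)\ge q(0)>0$ gives $\|u(t)\|_2\le |a_1(t)|/\sqrt{q(0)}$, the ODE $a_1'=a_1(\gamma/\|u\|_2-\lambda_1-\gamma)$ forces $|a_1|$ to be increasing whenever $|a_1(t)|<\sqrt{q(0)}\,\gamma/(\gamma+\lambda_1)$; a barrier argument then yields $|a_1(t)|\ge\min\bigl(|a_1(0)|,\sqrt{q(0)}\,\gamma/(\gamma+\lambda_1)\bigr)$ for all $t\ge0$, hence $\|u(t)\|_2\ge|a_1(t)|$ is uniformly bounded below. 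With that in hand your LaSalle argument goes through (every $\omega$-limit point is a nonzero critical point, $q(t)\to1$ forces it into $\mathrm{span}\,\psi_1$, sign-preservation of $a_1$ fixes the sign, and simplicity of $\lambda_1$ gives uniqueness of the limit).
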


Before proving Theorem~\ref{thm:parabolicpde}, we will present an ODE version of \eqref{model:pde}, which will be used when proving Theorem~\ref{thm:parabolicpde}. 

For $A\in\symm_{N,p}(\R)$, as we did previously, if we consider a diagonalization of $A$, $A=Q\Lambda_N Q^T$, where  $Q$ is an orthogonal matrix and  $\Lambda_N = diag(\lambda_1,\dots,\lambda_N)$, $0< \lambda_1=\cdots=\lambda_p<\lambda_{p+1}\leq \cdots\leq\cdots\leq \lambda_N$ for some $1\leq p<N$,  we have
\begin{equation}\label{equiv:diagonal}
F_A(\vec{x}) = F_{\Lambda_N}(\vec{y}),\ \text{ with } \vec{y} = Q^T\vec{x}.
\end{equation}
The gradient descent flow associated with the functional $F_{\Lambda_N}$ is
$$
\frac{d}{dt}\Phi_N(t) = -\nabla F_{\Lambda_N}(\Phi_N(t)),
$$
and we are interested in the existence of a solution of \eqref{equiv:odes} below, which is to solve  on $(0,\infty)$
\begin{equation}\label{equiv:odes}
\begin{cases}
\frac{d}{dt}\Phi_N(t)& =\ \ -\nabla F_{\Lambda_N}(\Phi_N(t)) = -(\Lambda_N+\gamma I)\Phi_N(t) +\frac{\gamma}{\|\Phi_N(t)\|}\Phi_N(t),\\ 
\Phi_N(0) &  \in \ \  \R^N,\ \phi_j(0)\neq 0\ \text{ for some } 1\leq j\leq p,
\end{cases}
\end{equation}
where $\Phi_N = \Phi_N(t) = [\phi_1(t)\ \cdots\ \phi_N(t)]^T$ and $ \|\Phi_N(t)\| = \sqrt{\sum_{k=1}^N \phi_k^2(t)}$.

\begin{theorem}\label{thm:existenceODE}
There exist a unique solution $\Phi_N\in C^1([0,\infty))$ of \eqref{equiv:odes} and  $\vec{v}\in\R^N$ such that
$$\Phi_N(t)\rightarrow \vec{v}\ \text{ as } \ t\rightarrow\infty,$$ where $\vec{v}$ is an eigenvector of $\Lambda_N$ corresponding to the eigenvalue $\lambda_1$ with $\|\vec{v}\| = \frac{\gamma}{\gamma+\lambda_1}$ and $\langle \vec{v},\vec{e}_j\rangle\neq 0$. Note that $\{\vec{e}_1,\dots,\vec{e}_N\}$ is the standard basis for $\R^N$.

\end{theorem}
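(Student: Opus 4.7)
The plan is to exploit the diagonal structure of $\Lambda_N$, which decouples \eqref{equiv:odes} into $N$ scalar ODEs coupled only through $r(t):=\|\Phi_N(t)\|$; I will first establish global existence by sandwiching $r(t)$ between positive constants, and then use the resulting explicit formula for each component to identify the limit. For local existence and uniqueness, the right-hand side $\Psi\mapsto -(\Lambda_N+\gamma I)\Psi + \gamma\Psi/\|\Psi\|$ is smooth on $\R^N\setminus\{0\}$, so Picard--Lindel\"of applies. To extend the solution to all of $[0,\infty)$, I would compute
$$r'(t)\;=\;-(\lambda(t)+\gamma)\,r(t)+\gamma,\qquad \lambda(t):=\frac{\langle\Phi_N(t),\Lambda_N\Phi_N(t)\rangle}{r(t)^2}\in[\lambda_1,\lambda_N].$$
The one-sided estimates $r'(t)\geq -(\lambda_N+\gamma)r+\gamma$ and $r'(t)\leq -(\lambda_1+\gamma)r+\gamma$ yield uniform bounds $0<c_1\leq r(t)\leq c_2$, preventing both blow-up and collision with the singularity $\{0\}$, so the $C^1$ solution exists on all of $[0,\infty)$.

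Because $\Lambda_N$ is diagonal, each component satisfies $\phi_i'=\phi_i[-(\lambda_i+\gamma)+\gamma/r]$, which integrates explicitly to
$$\phi_i(t)=\phi_i(0)\exp\Bigl(\int_0^t\bigl[-(\lambda_i+\gamma)+\gamma/r(s)\bigr]\,ds\Bigr).$$
Thus $\phi_i$ retains the sign of $\phi_i(0)$ (and vanishes identically when $\phi_i(0)=0$), and for any $j_0\leq p$ with $\phi_{j_0}(0)\neq0$ (which exists by hypothesis), the ratios
$$\phi_i(t)/\phi_{j_0}(t)\;=\;(\phi_i(0)/\phi_{j_0}(0))\,e^{(\lambda_{j_0}-\lambda_i)t}$$
are constant when $\lambda_i=\lambda_1$ and decay exponentially when $\lambda_i>\lambda_1$. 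Setting $S:=\{i:\lambda_i=\lambda_1\}$ and $M:=\sum_{i\in S}(\phi_i(0)/\phi_{j_0}(0))^2>0$, this yields $r(t)^2/\phi_{j_0}(t)^2\to M$ and hence $\lambda(t)\to\lambda_1$ as $t\to\infty$.

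With $\lambda(t)\to\lambda_1$ in hand, a variation-of-constants estimate on $r'(t)=-(\lambda(t)+\gamma)r(t)+\gamma$ --- splitting the integral at a time $T$ past which $|\lambda(t)-\lambda_1|<\epsilon$ and sending $\epsilon\to0$ --- gives $r(t)\to\gamma/(\gamma+\lambda_1)$. Substituting back into the explicit formula for $\phi_i(t)$, each component converges: to $0$ for $i\notin S$, and to a nonzero limit of the prescribed sign for $i\in S$ with $\phi_i(0)\neq0$. The limit vector $\vec v$ therefore lies in $\operatorname{span}\{\vec e_i:i\in S\}$, satisfies $\|\vec v\|=\gamma/(\gamma+\lambda_1)$, and obeys $\langle\vec v,\vec e_j\rangle\neq0$ whenever $\phi_j(0)\neq0$. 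The main technical obstacle will be the asymptotic-autonomy step --- extracting genuine convergence of $r(t)$ (not just $\limsup/\liminf$ bounds) from $\lambda(t)\to\lambda_1$ --- but this is routine via the variation-of-constants formula, while the rest of the argument follows directly from the decoupled explicit representation of each $\phi_i(t)$.
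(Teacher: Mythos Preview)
Your argument is correct and, in fact, cleaner than the paper's. Both proofs share the same two endpoints --- a positive lower bound on $r(t)=\|\Phi_N(t)\|$ for global existence, and the explicit representation $\phi_i(t)=\phi_i(0)\exp\!\bigl(\int_0^t[-(\lambda_i+\gamma)+\gamma/r(s)]\,ds\bigr)$ for the limit identification --- but they reach the crucial fact $r(t)\to\gamma/(\gamma+\lambda_1)$ by different routes. The paper uses $F_{\Lambda_N}(\Phi_N(t))$ as a Lyapunov function to obtain $\|\Phi_N'(t)\|\to0$, then argues through subsequential limits and a contradiction (assuming the limiting eigenvalue exceeds $\lambda_1$ and forcing blow-up of $\sum_{k\le p}\phi_k^2$) to pin down the eigenvalue; only afterwards does it integrate explicitly. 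You instead work directly with the scalar equation $r'=-(\lambda(t)+\gamma)r+\gamma$: the ratio formula $\phi_i/\phi_{j_0}=(\phi_i(0)/\phi_{j_0}(0))e^{(\lambda_1-\lambda_i)t}$ immediately forces the Rayleigh quotient $\lambda(t)\to\lambda_1$, after which asymptotic autonomy of the scalar ODE gives $r(t)\to\gamma/(\gamma+\lambda_1)$ without any subsequence or Lyapunov machinery. Your approach exploits the diagonal structure more fully and is shorter; the paper's Lyapunov argument is more portable to settings without such explicit decoupling. One small point worth making explicit in your write-up: the convergence of $\phi_i(t)$ for $i\in S$ follows most cleanly not by showing the exponent $\int_0^t[-(\lambda_1+\gamma)+\gamma/r]$ converges directly, but by noting that $\phi_i(t)^2/r(t)^2$ converges (from the ratio analysis) and $r(t)^2$ converges, so $\phi_i(t)^2$ converges, and the constant sign finishes it --- which is exactly what the paper does at the end as well.
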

\begin{proof}
The existence and uniqueness of a solution $\Phi_N$ on $[0,\infty)$ is easily guaranteed by the theory of ODEs once we establish a lower bound $\omega>0$ for $\|\Phi_N(t)\|$, $t\in[0,\infty)$. Due to  $\phi_j(0)\neq 0$, i.e., $\|\Phi_N(0)\|> 0$, a solution exists and is unique on $[0,\epsilon]$ with $\epsilon<<1$. Then, by setting $$T_{max}= \sup\{T\in(0,\infty) : \text{a solution } \Phi_N \text{ exists and is unique on } [0,T] \},$$ we know that $T_{max}\geq \epsilon$. Note that for $t\in (0,T_{max})$ and for $1\leq k\leq N$, we have
\begin{align*}
\frac{1}{2}\frac{d}{dt}\phi_k^2(t) =&\ -(\lambda_k+\gamma)\phi_k^2(t) + \frac{\gamma}{\|\Phi_N(t)\|}\phi_k^2(t)\\
>&\ -(\lambda_N+\gamma)\phi_k^2(t),
\end{align*}
resulting in $\phi_k(t)\neq 0$ for $t\geq0$ if and only if $\phi_k(0)\neq 0$. 

Let $\omega = \frac{1}{2}\min(\frac{\gamma}{\gamma+\lambda_N}, \|\Phi_N(0)\|)$. Suppose $$\{t\in(0,T_{max}) : \|\Phi_N(t)\|\leq\omega\}\neq \emptyset.$$ Then, $T_\omega = \inf\{t\in(0,T_{max}) : \|\Phi_N(t)\|\leq\omega\}$ exists.
Since there exists $\delta>0$ such that $\omega<\|\Phi_N(t)\|< \frac{3}{2}\omega$ on $(T_\omega-\delta, T_\omega)$, we have
$$
\frac{1}{2}\frac{d}{dt}\|\Phi_N(t)\|^2  \geq \frac{1}{3}(\lambda_N + \gamma)\|\Phi_N(t)\|^2>0\ \text{ on } (T_\omega-\delta,T_\omega),
$$
that is, $\|\Phi_N(t)\|$ is increasing in $(T_\omega-\delta,T_\omega)$ and $\lim_{t\rightarrow T_\omega} \|\Phi_N(t)\| > \omega = \|\Phi_N(T_\omega)\|.$ This is a contradiction. Therefore, we have
$$
\inf_{t\in(0,T_{max})}\|\Phi_N(t)\| >\omega,
$$
which eventually implies that $T_{max} = \infty$.

In addition, we note that
$$
\frac{d}{dt}F_{\Lambda_N}(\Phi_N) =\Big\langle\frac{d}{dt}\Phi_N, \Lambda_N\Phi_N + \gamma \Phi_N -\frac{\gamma}{\|\Phi_N\|}\Phi_N\Big\rangle = -\Big\|\frac{d\Phi_N}{dt}\Big\|^2
$$
and that since $F_{\Lambda_N}(\Phi_N)$ is bounded below, there exists $y\in\R$ such that
\begin{equation}\label{simplemodel:min}
\lim_{t\rightarrow\infty} F_{\Lambda_N}(\Phi_N(t)) = y
\end{equation}
implying that 
$$
\lim_{t\rightarrow\infty}\Big\|\frac{d\Phi_N}{dt}\Big\|^2  = 0\ \Leftrightarrow\ \lim_{t\rightarrow\infty}\Big\|\Lambda_N\Phi_N + \gamma\Big(1-\frac{1}{\|\Phi_N\|}\Big)\Phi_N\Big\|^2 = 0.
$$
That is,  for each $k=1,2,\dots,N$,
\begin{equation}\label{limit1}
\lim_{t\rightarrow\infty} \Big(\lambda_k+\gamma -\frac{\gamma}{\|\Phi_N(t)\|}\Big)^2\phi_k^2(t) =0.
\end{equation}
Since 
$$
0 = \lim_{t\rightarrow\infty}\Big\|\Lambda_N\Phi_N + \gamma\Big(1-\frac{1}{\|\Phi_N\|}\Big)\Phi_N\Big\|\geq \lim_{t\rightarrow\infty}\Big|\|(\Lambda_N+\gamma I)\Phi_N\| - \gamma\Big|,
$$
we have 
\begin{equation}\label{limit2}
\lim_{t\rightarrow\infty}\sum_{k=1}^N(\lambda_k+\gamma)^2\phi_k^2(t) = \gamma^2,
\end{equation}
which implies that there exist $k_1$ and $\{t_n\}$ such that $t_n\rightarrow\infty$ as $n\rightarrow\infty$ and $$\lim_{n\rightarrow\infty}\phi_{k_1}^2(t_n) >0.$$ Unless $\lim_{n\rightarrow\infty}\phi_{k_1}^2(t_n) = \frac{\gamma^2}{(\gamma +\lambda_{k_1})^2}$, there exists $k_2\neq k_1$ such that $$\lim_{n\rightarrow\infty}\phi_{k_2}^2(t_n)>0$$ by taking a subsequence of $\{t_n\}$ if necessary. Then, \eqref{limit1} implies 
$$
\lambda_{k_1}= \lambda_{k_2}\ \text{ and }\ \lim_{n\rightarrow\infty} \|\Phi_N(t_n)\| = \frac{\gamma}{\gamma+\lambda_{k_1}}.
$$
We may repeat this process finitely many times to have
$$
\lim_{n\rightarrow\infty}\sum_{\{k : \lambda_k = \lambda_{k_1}\}}\phi_k^2(t_n) = \frac{\gamma^2}{(\gamma + \lambda_{k_1})^2},\quad \lim_{n\rightarrow\infty}\sum_{\{k : \lambda_k\neq \lambda_{k_1}\}}\phi_k^2(t_n) = 0.
$$
Suppose that there exist $l$ and $\{s_n\}$ such that $\lambda_l \neq \lambda_{k_1}$ and $s_n\rightarrow\infty$ as $n\rightarrow\infty$ and 
$$
\lim_{n\rightarrow\infty}\phi_{l}^2(s_n) > 0.
$$
Then, as was done above, with a subsequence of $\{s_n\}$ if necessary, we have $\lim_{n\rightarrow\infty}\|\Phi_N(s_n)\| =\frac{\gamma}{\gamma+\lambda_l}$ and
$$
\lim_{n\rightarrow\infty}\sum_{\{k : \lambda_k = \lambda_l\}}\phi_k^2(s_n) = \frac{\gamma^2}{(\gamma + \lambda_{l})^2},\quad \lim_{n\rightarrow\infty}\sum_{\{k : \lambda_k\neq \lambda_l\}}\phi_k^2(s_n) = 0
$$
and \eqref{simplemodel:min} implies that
$$
\lim_{n\rightarrow\infty}F_{\Lambda_N}(\Phi_N(t_n)) = -\frac{\gamma^2}{2(\gamma+\lambda_{k_1})} = y = \lim_{n\rightarrow\infty}F_{\Lambda_N}(\Phi_N(s_n)) =  -\frac{\gamma^2}{2(\gamma+\lambda_{l})},
$$
which is a contradiction. Therefore, we conclude that $\lim_{t\rightarrow\infty}\|\Phi_N(t)\| =\frac{\gamma}{\gamma+\lambda_{k_1}}$ and 
$$
\lim_{t\rightarrow\infty}\sum_{\{k : \lambda_k = \lambda_{k_1}\}}\phi_k^2(t) = \frac{\gamma^2}{(\gamma + \lambda_{k_1})^2},\quad \lim_{t\rightarrow\infty}\sum_{\{k : \lambda_k\neq \lambda_{k_1}\}}\phi_k^2(t) = 0.
$$

We will now claim that $\lambda_{k_1} = \lambda_1$. Suppose that $\lambda_{k_1}>\lambda_1$, i.e., $k_1>p$. Then, since $\frac{\gamma(2\gamma+\lambda_1+\lambda_{k_1})}{2(\gamma + \lambda_1)(\gamma+\lambda_{k_1})}>\frac{\gamma}{(\gamma+\lambda_{k_1})}$, there exists $T>0$ such that for $t>T$, $$\|\Phi_N(t)\|<\frac{\gamma(2\gamma+\lambda_1+\lambda_{k_1})}{2(\gamma + \lambda_1)(\gamma+\lambda_{k_1})},$$ and we have
$$
\frac{1}{2}\frac{d}{dt}\sum_{k=1}^p\phi_k^2(t) =\Big( -(\lambda_1+\gamma) + \frac{\gamma}{\|\Phi_N(t)\|}\Big)\sum_{k=1}^p\phi_k^2(t) \geq\Big( \frac{\lambda_{k_1}-\lambda_1}{2\gamma+\lambda_1+\lambda_{k_1}}\Big)\sum_{k=1}^p\phi_k^2(t)
$$
for $t>T$, which results in $\|\Phi_N(t)\|\rightarrow \infty$ as $t\rightarrow\infty$. This is a contradiction. Therefore, we have $\lambda_{k_1} = \lambda_1$ and $\lim_{t\rightarrow\infty}\sum_{k=p+1}^N\phi_k^2(t) = 0$ and
\begin{equation}\label{equiv:odes_conv}
\lim_{t\rightarrow\infty}\|\Phi_N(t)\|^2 =\lim_{t\rightarrow\infty}\sum_{k=1}^p\phi_k^2(t) = \frac{\gamma^2}{(\gamma + \lambda_{1})^2}.
\end{equation}

Lastly, we will claim that there exists $\vec{v}\in\R^N$ such that $\|\vec{v}\| = \frac{\gamma}{\gamma+\lambda_1}$, and $\langle\vec{v}, \vec{e}_j\rangle \neq 0$, and 
$$
\Phi_N(t)\rightarrow \vec{v}\ \text{ as } \ t\rightarrow\infty.
$$
Note that for $k=1,\dots, p$, if $\phi_k(0)\neq 0$, then
$$
\frac{1}{2\phi_k^2(t)}\frac{d}{dt}\phi_k^2(t)= -(\lambda_1+\gamma) +\frac{\gamma}{\|\Phi_N(t)\|}\ \text{ for }\ t\geq 0,
$$
implying that for $T\in(0,\infty)$,
$$
\ln(\phi_k^2(T)) - \ln(\phi_k^2(0)) = 2\int_0^T \Big(-(\lambda_1+\gamma)+\frac{\gamma}{\|\Phi_N(t)\|}\Big)dt =: 2\Psi(T).
$$
From \eqref{equiv:odes_conv}, we can see that
$$
\sum_{k=1}^p\phi_k^2(T) = e^{2\Psi(T)}\sum_{k=1}^p\phi_k^2(0) \rightarrow \Big(\frac{\gamma}{\gamma+\lambda_1}\Big)^2\ \text{ as }\ T\rightarrow\infty,
$$
which implies
$$
\Psi_\infty = \lim_{t\rightarrow\infty}\Psi(t)= \ln\Big(\frac{\gamma}{\gamma+\lambda_1}\Big)-\frac{1}{2}\ln\Big(\sum_{k=1}^p\phi_k^2(0)\Big).
$$
Hence, for $1\leq k\leq p$, 
$$
\phi_k^2(t)\rightarrow v_k^2\ \text{ as }\ t\rightarrow\infty,
$$
where $v_k = \phi_k(0)e^{\Psi_\infty}$. Noting that $\phi_k(t)\phi_k(0) >0$ for all $t\geq 0$ unless $\phi_k(0) = 0$, we can easily see that
$$
\phi_k(t)\rightarrow v_k\ \text{ as }\ t\rightarrow\infty.
$$
If we set
$$
\vec{v} = [v_1\ \cdots\ v_p\ 0\ \cdots\ 0]^T,
$$
then $\vec{v}$ is an eigenvector of $\Lambda_N$ corresponding to $\lambda_1$ and $\langle \vec{v},\vec{e}_j\rangle =v_j\neq 0$ since $\phi_j(0)\neq 0$ for some $1\leq j\leq p$, and 
$$
\Phi_N(t) \rightarrow \vec{v}\ \text{ as }\ t\rightarrow\infty
$$
and $\|\vec{v}\| = \frac{\gamma}{\gamma+\lambda_1}$.
\qed\end{proof}
We will now give a proof of Theorem~\ref{thm:parabolicpde}.

\begin{proof}{(\bf Theorem~\ref{thm:parabolicpde})}
The proof is inspired by the Galerkin's method. Note that we can find an orthonormal basis $\{\psi_k\}_{k\in\N}$ for $L^2(\Omega)$ such that $\psi_k$ is an eigenfunction of $L$ corresponding to the $k^{th}$ smallest eigenvalue $\lambda_k$ with $\psi_k\in H_0^1(\Omega)$. Then, we let $V_N$, $N\in\N$, be the subspace of $L^2(\Omega)$ spanned by $\{\psi_1,\dots, \psi_N\}$ and let $P_N$ be the projection of $L^2(\Omega)$ onto $V_N$.

Suppose that $u_0\in H_0^1(\Omega)$ is given and satisfies $\langle u_0,\psi_1\rangle \neq 0$. We  consider
\begin{equation}\label{simplemodel}
\begin{cases}
\frac{\partial u}{\partial t} &=\ -L u -\gamma\Big(1-\frac{1}{\|u(t)\|_2}\Big)u\ \text{ in }\ \Omega_T,\\
u &=\ 0\ \text{ on }\ \partial\Omega_T,\\
u(0) &=\ P_N(u_0).
\end{cases}
\end{equation}
If $u_N\in L^2([0,T];H_0^1(\Omega))$ with $\frac{d}{dt}u_N\in L^2([0,T];H^{-1}(\Omega))$ is a solution of \eqref{simplemodel}, then for $k=1,\dots, N$, we have
$$
\frac{d}{dt}\langle u_N, \psi_k\rangle = -(\lambda_k+\gamma)\langle u_N,\psi_k\rangle + \frac{\gamma}{\|u_N(t)\|_2} \langle u_N,\psi_k\rangle
$$
and
$$
\frac{1}{2}\frac{d}{dt}\langle u_N, \psi_k\rangle^2 = -(\lambda_k+\gamma)\langle u_N,\psi_k\rangle^2 + \frac{\gamma}{\|u_N(t)\|_2} \langle u_N,\psi_k\rangle^2
$$
with $\langle u_N(0),\psi_k\rangle = \langle P_N(u_0),\psi_k\rangle$. Considering $\phi_k(t) = \langle u_N(t),\psi_k\rangle$, $k=1,\dots,N$, we can see that $\phi_1,\dots,\phi_N$ solve \eqref{equiv:odes}.  Since $\Phi_N =[\phi_1\ \cdots\ \phi_N]^T$ satisfies that for $t\in[0,\infty)$,
\begin{equation}\label{solutionbound}
\|u_N(t)\|_2\geq \|\Phi_N(t)\|> \omega = \frac{1}{2}\min\Big(\frac{\gamma}{\gamma+\lambda_N}, \|\Phi_N(0)\|\Big).
\end{equation}
Therefore, the existence of a solution to \eqref{simplemodel} can be obtained by a linear system of ODEs  \eqref{equiv:odes} given in the Appendix below, with the initial condition $$\Phi_N(0) = [\langle P_N(u_0),\psi_1\rangle\ \cdots\ \langle P_N(u_0),\psi_N\rangle]^T,$$ and setting
$$
u_N(x,t) =  \phi_1(t)\psi_1(x) + \cdots +\phi_N(t)\psi_N(x).
$$
Then, $u_N$ is a weak solution of \eqref{simplemodel} such that
$$
u_N(t)\rightarrow \frac{\gamma}{\gamma+\lambda_1}\vec{v}\ \text{ in } L^2(\Omega)\ \text{ as }\ t\rightarrow\infty,
$$
where $\phi_1,\dots,\phi_N$ satisfy \eqref{equiv:odes} and $\vec{v}$ is either $\psi_1$ or $-\psi_1$. 


Suppose now that $u_{N,1}, u_{N,2}$ are two such solutions of \eqref{simplemodel}. Let $v_N = u_{N,1} - u_{N,2}$. Then, we have
\begin{equation}\label{simplemodel:v}
\frac{\partial v_N}{\partial t} = -Lv_N - \gamma (f(u_{N,1}) - f(u_{N,2})),
\end{equation}
where $f(u(x,t)) = u(x,t) - \frac{u(x,t)}{\|u(t)\|_2}$. Since $f$ on $L^2(\Omega)$ is Lipschitz with a Lipschitz constant $\mu= (\frac{2}{\omega}-1)$ on $\{u : \|u\|_2\geq \omega\}$ and the two solutions $u_{N,1}, u_{N,2}$ satisfy \eqref{solutionbound} for all $t\geq 0$, by taking the inner product on $L^2(\Omega)$ with $v_N$ on both sides of \eqref{simplemodel:v}, we have
$$
\frac{1}{2}\frac{d}{dt}\langle v_N(t),v_N(t)\rangle  \leq  \langle -Lv_N(t), v_N(t)\rangle+\gamma \mu\langle v_N(t),v_N(t)\rangle,
$$
which implies 
\begin{equation}\label{intermediatesolution}
\frac{1}{2}\frac{d}{dt}\langle v_N,v_N\rangle  -\gamma \mu\langle v_N,v_N\rangle \leq  -\langle Lv_N, v_N\rangle \leq 0.
\end{equation}
Therefore, $\|v_N(0)\|_2= 0$ implies $u_{N,1} = u_{N,2}$ for a.e. $(x,t)\in \Omega\times[0,T]$ for any $T\in(0,\infty)$. In fact, we know that
$$
u_N\in C^1([0,\infty);H_0^1(\Omega))
$$
as well as
$$
u_N\in L^2([0,\infty);H_0^1(\Omega)),\quad \frac{d}{dt}u_N\in L^2([0,\infty);H^{-1}(\Omega)).
$$

We now solve \eqref{model:pde}. Firstly, we fix $k_0\in\N$ so that $\frac{\|u_0\|_2}{\sqrt{2}}<\|P_{k_0}(u_0)\|_2$ with $\lambda_{k_0} < \lambda_{k_0+1}$ and obtain the solution $u_N$ of \eqref{simplemodel} with $N>k_0$. We may write $u_N$ as
$$
u_N(t) = \phi_{1,N}(t)\psi_1 + \cdots + \phi_{N,N}(t)\psi_N.
$$

Let $\varphi_{1,N}(t) = \phi_{1,N}^2(t) + \cdots + \phi_{k_0,N}^2(t)$ and $\varphi_{2,N}(t) = \phi_{k_0+1,N}^2(t) + \cdots + \phi_{N,N}^2(t)$. Then, we have
\begin{align}
\frac{1}{2}\frac{d\varphi_{1,N}}{dt} &\ \geq -(\lambda_{k_0}+\gamma)\varphi_{1,N} + \frac{\gamma}{\|\Phi_N(t)\|}\varphi_{1,N},\label{exactmodel:ode1}\\
\frac{1}{2}\frac{d\varphi_{2,N}}{dt} &\ \leq -(\lambda_{k_0+1} +\gamma)\varphi_{2,N} + \frac{\gamma}{\|\Phi_N(t)\|}\varphi_{2,N}\label{exactmodel:ode2}.
\end{align}
This implies that 
\begin{align*}
\varphi_{1,N}(t)  \geq&\ \varphi_{1,N}(0)e^{-2(\lambda_{k_0}+\gamma)t +\int_0^t\frac{2\gamma}{\|\Phi_N(s)\|}ds} \\
\geq& \varphi_{2,N}(0)e^{-2(\lambda_{k_0+1}+\gamma)t +\int_0^t\frac{2\gamma}{\|\Phi_N(s)\|}ds}\geq \varphi_{2,N}(t)
\end{align*}
i.e.,
\begin{equation}\label{exactmodel:ode3}
\frac{\varphi_{1,N}(t)}{\varphi_{2,N}(t)}\geq \frac{\varphi_{1,N}(0)}{\varphi_{2,N}(0)}e^{2(\lambda_{k_0+1}-\lambda_{k_0})t} > e^{2(\lambda_{k_0+1}-\lambda_{k_0})t}
\end{equation}
due to $\varphi_{1,N}(0) >\frac{\|u_0\|_2^2}{2}> \varphi_{2,N}(0)$.  Let 
$$
\begin{cases}
M_1 &=\  2\max\Big(\frac{\gamma}{\gamma+\lambda_1},\|u_0\|_2\Big),\\
M_2 &=\ \frac{1}{\sqrt{2}}\min\Big(\frac{\gamma}{\lambda_{k_0}+\gamma}, \|u_0\|_2\Big).
\end{cases}
$$
Note that 
$$
\frac{1}{2}\frac{d\varphi_{1,N}}{dt} \leq -(\lambda_1+\gamma)\varphi_{1,N} + \frac{\gamma}{\|\Phi_N(t)\|}\varphi_{1,N}
$$
implies  $\varphi_{1,N}(t) \leq \|\Phi_N(t)\|^2 < M_1^2$. And \eqref{exactmodel:ode3} implies
\begin{equation}\label{exactmodel:ineq2}
\varphi_{2,N}(t)< M_1^2e^{-2(\lambda_{k_0+1}-\lambda_{k_0})t}.
\end{equation}

Since we already saw that $\phi_{j,N}(t)$, $j=1,2,\dots,N$, exist for all $t\in(0,\infty)$, if we suppose $$t_0=\inf\{t\in(0,\infty) | \varphi_{1,N}(t) =M_2^2\}<\infty,$$ then from \eqref{exactmodel:ode1} and \eqref{exactmodel:ode3},  we have
$$
\|\Phi_N(t_0)\|^2 = \varphi_{1,N}(t_0) + \varphi_{2,N}(t_0) < M_2^2(1+e^{-2(\lambda_{k_0+1}-\lambda_{k_0})t_0})<2M_2^2\leq \Big(\frac{\gamma}{\lambda_{k_0}+\gamma}\Big)^2.
$$
Therefore, there exists $\delta>0$ such that for $t\in(t_0-\delta,t_0)$,
$$
\|\Phi_N(t)\| < \frac{\gamma}{\lambda_{k_0}+\gamma},
$$
which implies that
$$
\frac{1}{2}\frac{d\varphi_{1,N}}{dt}(t)  \geq -(\lambda_{k_0}+\gamma)\varphi_{1,N}(t) + \frac{\gamma}{\|\Phi_N\|}\varphi_{1,N}(t) >0\ \text{ on } (t_0-\delta,t_0).
$$
This is a contradiction since $\varphi_{1,N}(t) > M_2^2$ for $t\in[0,t_0)$. Therefore, we end up with
\begin{equation}\label{exactmodel:ineq1}
M_2^2 < \varphi_{1,N}(t) < M_1^2\ \text{ for } t\geq 0.
\end{equation}
Note that \eqref{exactmodel:ineq1} implies
$$
\inf_{t\geq 0}\|\Phi_N(t)\|\geq M_2
$$
That is, the solution $u_N$ for $N>k_0$ satisfies
\begin{equation}\label{exactmodel:estimate1}
\inf_{t\geq 0} \|u_N(t)\|_2\geq M_2.
\end{equation}

We fix $T\in(0,\infty)$ and consider a sequence of solutions $\{u_N(t)\}_{N > k_0}$, where $u_N$ is the solution of  \eqref{simplemodel} with $u_N(0) = P_N(u_0)$. Using $v = u_N - u_M$ with $N,M>k_0$ in place of $v_N$ in \eqref{intermediatesolution}, and noting that we may choose $\omega = M_2$ for $\mu = (\frac{2}{\omega}-1)$, we obtain
\begin{equation}\label{exactmodel:estimate2}
\|u_N(t) - u_M(t)\|_2 \leq e^{\gamma \mu t}\|u_N(0) - u_M(0)\|_2\leq  e^{\gamma \mu T}\|u_N(0) - u_M(0)\|_2\ \text{ for } t\in[0,T],
\end{equation}
which implies that for any $m\in\N$, $\{\phi_{m,N}(t)\}_{N>k_0}$ converges uniformly to $\phi_{m,*}(t)$ in $[0,T]$ and
$$
\|\Phi_N(t)\| \rightarrow \|\Phi_*(t)\| \text{ uniformly on } [0,T] \text{ as }\ N\rightarrow\infty,
$$
where $\phi_{m,N}(t) = \langle u_N(t), \psi_m\rangle$ and $\|\Phi_*(t)\| = \sqrt{\sum_{m=1}^\infty \phi_{m,*}^2(t)}$.  Hence,  for all $m$, we have
$$
\frac{d}{dt}\phi_{m,*} = -(\lambda_m+\gamma)\phi_{m,*} + \frac{\gamma}{\|\Phi_*\|}\phi_{m,*}.
$$
In addition, for $k_0< m\leq N$, if we consider $$\omega_{m,N}(t) = \phi_{m,N}^2(t) + \cdots + \phi_{N,N}^2(t),$$ then we can obtain, by the same argument for \eqref{exactmodel:ineq2},
\begin{equation}\label{exactmodel:ineq3}
\omega_{m,N}(t) < M_1^2e^{-2(\lambda_{m}-\lambda_{k_0})t},\ t\geq 0
\end{equation}
implying $|\phi_{m,N}(t)|< M_1e^{-(\lambda_{m}-\lambda_{k_0})t}$ on $[0,T]$ and, eventually, on $(0,\infty)$. By a slight modification on \eqref{exactmodel:ineq2}, we can see that even for each $2\leq m\leq k_0$, there exists $\zeta_m>0$ such that $|\phi_{m,N}(t)|< \zeta_mM_1e^{-(\lambda_{m}-\lambda_{1})t}$ on $[0, \infty)$, i.e., with $\eta_{k_0} =\max_{k=2,\dots,k_0}(\zeta_k)$,
\begin{equation}\label{mNbound}
|\phi_{m,N}(t)| <\begin{cases}\eta_{k_0}M_1e^{-(\lambda_{m}-\lambda_{1})t}, &\ 2\leq m\leq k_0,\\ M_1e^{-(\lambda_{m}-\lambda_{k_0})t}, &\ m>k_0.\end{cases}
\end{equation}
This implies that
$$
\Big\{\sum_{m=1}^N \lambda_m\phi_{m,N}^2\Big\}_{N\in\N}\text{ converges uniformly to }\sum_{m=1}^\infty\lambda_m\phi_{m,*}^2\ \text{ in } [0,T] \text{ as } N\rightarrow\infty.
$$
Hence, by defining
$$
u_*(x,t) = \sum_{m=1}^\infty \phi_{m,*}(t)\psi_m(x),
$$
we can easily see that $u_*\in L^2([0,T];H_0^1(\Omega))$ and $$\frac{d}{dt}u_* =\sum_{m=1}^\infty \frac{d\phi_{m,*}}{dt}(t)\psi_m(x) \in L^2([0,T];H^{-1}(\Omega))$$ and that for a.e. $t\in[0,T]$, and for each $v\in H_0^1(\Omega)$,
\begin{align*}
&\ \int_\Omega \frac{\partial}{\partial t}u_*(x,t)v(x)dx +\int_\Omega \sum_{i,j=1}^da_{i,j}(x)\partial_ju_*(x)\partial_iv(x)dx +\int_\Omega c(x)u_*(x)v(x)dx\\
&\ \gamma\int_\Omega u_*(x)v(x)dx -\frac{\gamma}{\|u_*(t)\|_2}\int_\Omega u_*(x)v(x)dx =0
\end{align*}
with $u_*(0) = u_0\  \text{ in } L^2(\Omega).$ Therefore, $u_*$ is a weak solution of \eqref{model:pde} for any $T\in(0,\infty)$. Next, using the functional in \eqref{eigeq:functionalL}, we define $f(t)$ for $t\in(0,\infty)$ by
\begin{align*}
f(t) = F_L(u_*) =&\ \frac{1}{2}\int_\Omega \sum_{i,j=1}^da_{ij}(x)\partial_ju_*(x,t)\partial_iu_*(x,t)dx  +\frac{1}{2}\int_\Omega c(x)|u_*(x,t)|^2dx\\
&\ +\frac{\gamma}{2}\int_\Omega |u_*(x,t)|^2dx - \gamma\Big(\int_\Omega|u_*(x,t)|^2dx\Big)^\frac{1}{2},
\end{align*}
Note that
$$
f(t) = \frac{1}{2}\sum_{m=1}^\infty \lambda_m\phi_{m,*}^2(t) +\frac{\gamma}{2}\sum_{m=1}^\infty\phi_{m,*}^2(t) - \gamma\|\Phi_*(t)\|.
$$
Since $\phi_{m,*}^2(t)$ decays exponentially to 0 in $(0,\infty)$ as $m\rightarrow\infty$, we can see that
\begin{align*}
\frac{df}{dt}(t) =&\ \sum_{m=1}^\infty \Big(\lambda_m\phi_{m,*}(t)+\gamma\phi_{m,*}(t) - \gamma\frac{\phi_{m,*}(t)}{\|\Phi_*(t)\|}\Big)\frac{d\phi_{m,*}}{dt}(t)\\
=&\ -\sum_{m=1}^\infty \Big|\frac{d\phi_{m,*}}{dt}(t)\Big|^2.
\end{align*}
Therefore, $f$ is non-increasing and bounded below, i.e., $a=\lim_{t\rightarrow\infty}f(t)$ exists. From the fact that for all $N\in\N$,
$$
\|\Phi_N(t)\|\rightarrow \frac{\gamma}{\gamma+\lambda_1}\ \text{ as } t\rightarrow \infty,
$$
together with \eqref{mNbound}, we can see that
$$
\|\Phi_*(t)\| \rightarrow \frac{\gamma}{\gamma+\lambda_1}\ \text{ as }\ t\rightarrow\infty.
$$
which eventually implies that for each $m\in\N$, $|\frac{d\phi_{m,*}}{dt}(t)|\rightarrow 0$ as $t\rightarrow\infty$, i.e.,
\begin{equation}\label{exactsolutionlimit}
\lim_{t\rightarrow\infty}\Big((\lambda_m+\gamma)-\frac{\gamma}{\|\Phi_*(t)\|}\Big)\phi_{m,*}(t)=0.
\end{equation}
Moreover, since we have $|\phi_{m,*}(t)|\leq \max_{k=1,\dots,k_0}\{M_1\zeta_ke^{-(\lambda_2-\lambda_1)t}\}$ for all $m\geq 2$ with $\zeta_1=1$ and $$\inf_{t\geq 0}\|\Phi_*(t)\|  \geq M_2,$$ we finally obtain, together with \eqref{exactsolutionlimit},
$$
\lim_{t\rightarrow\infty}\|\Phi_*(t)\| =\frac{\gamma}{\lambda_1+\gamma},
$$
which implies that $\lim_{t\rightarrow\infty} |\phi_{1,*}(t)|= \frac{\gamma}{\lambda_1+\gamma},$ and $$u_*(t) \rightarrow \frac{\gamma}{\lambda_1+\gamma}\vec{v} \text{ in } L^2(\Omega)\ \text{ as }\ t\rightarrow\infty$$ for some $\vec{v}\in\{\pm\psi_1\}$.
\qed\end{proof}

\section{Numerical experiments}

 We will now present a few numerical experiments to confirm those analyzed properties of our proposed framework.
 
 \subsection{The gradient descent method}
 In Figure~\ref{fig1},~\ref{fig2},~\ref{fig3} and ~\ref{fig4}, we show the first 25 eigenfunctions of the Laplace operator $-\Delta$ corresponding to their eigenvalues in the increasing order on various domains that are subsets of $(-1,1)\times(-1,1)$ in $\R^2$.
We solved \eqref{model:pde} using the FDM (Finite Difference Method) on uniform grids of size $81\times 81$ including the boundaries, i.e., we represented $(-1,1)\times(-1,1)$ as a uniform grid of size $81\times 81$ and set the interior of each domain to have value 1 and the complement of the interior to  have value 0. As was seen that the solution $u(\cdot, t)$ to \eqref{model:pde} converges to an eigenfunction as $t\rightarrow\infty$, corresponding to the smallest eigenvalue, no eigenvalue estimate was necessary. We set $$dt = t_{n+1}-t_n = 0.17*h^2\ \text{ and }\ dx=dy=h = 0.025=\frac{2}{80},$$ and used
$$
\Big\|\Delta u(t_n) - \gamma\Big(1-\frac{1}{\|u(t_n)\|_2}\Big) u(t_n)\Big\|_2 < 10^{-13}
$$
 as a stopping criterion for all the numerical simulations. When the algorithm stops, $\gamma(\frac{1}{\|u(t_n)\|_2}-1)$ is set to be the eigenvalue corresponding to the eigenfunction $u(\cdot,t_n)$, Using MATLAB, at the $n^{th}$ iteration, representing $u(t_n)$ as an $81\times 81$ matrix, $u(t_{n+1})$ is computed by
 \begin{equation}\label{matlabcode}
 u(t_{n+1}) = u(t_n) + dt*(\Delta u(t_n) - \gamma\Big(1-\frac{1}{\|u(t_n)\|_2}\Big) u(t_n)),
 \end{equation}
 where 
 \begin{align*}
 \Delta u(t_n) =&\ \frac{1}{4}\text{circshift}(u(t_n), [0,\ 1]) + \frac{1}{4}\text{circshift}(u(t_n), [0,\ -1]) \\
 &\ + \frac{1}{4}\text{circshift}(u(t_n), [1,\ 0]) + \frac{1}{4}\text{circshift}(u(t_n), [-1,\ 0])- u(t_n).
 \end{align*}
 When considering various domains $\Omega$, we create a mask, which is an $81\times 81$ matrix representing $\Omega$, and multiply $u(t_n), u(t_{n+1})$ and $\Delta u(t_n)$ by the mask in \eqref{matlabcode}, which makes this method extremely useful since we can deal with various domains by defining the mask matrix only.
\begin{figure}[!htb]
\begin{center}
\includegraphics[scale=0.35]{./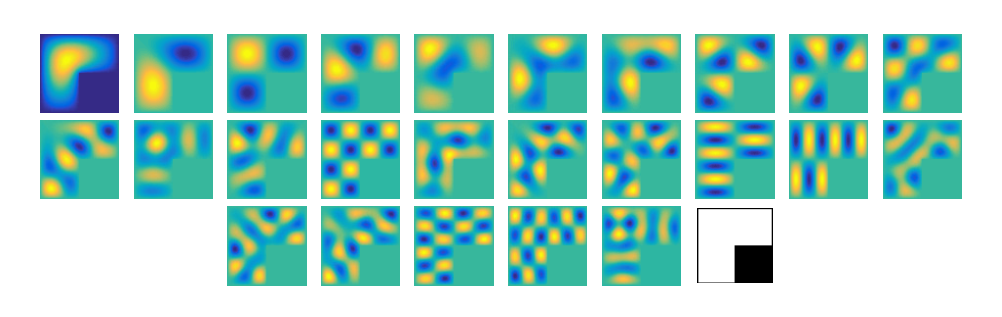} 
\end{center}
\caption{The first 25 eigenfunctions of the Laplace operator on an L shape domain, which is a union of three unit squares in $\R^2$, i.e., $(-1,1)\times(-1,1)\setminus[0,1)\times(-1,0] $, corresponding to their eigenvalues in the increasing order. The last figure visualizes the domain, a uniform grid of size $81\times 81$ including the boundary. The 8th and 9th eigenfunctions correspond to the same eigenvalue 49.2618. The 18th and 19th eigenfunctions correspond to the same eigenvalue 98.2808, The 23rd and 24th eigenfunctions correspond to the same eigenvalue 127.8136.}
\label{fig1}
\end{figure}

To have rough estimates of eigenvalues and eigenfunctions, we can solve \eqref{model:pde} on coarser grids. For example, since the L shape domain in Fig.~\ref{fig1} has a simple structure and there are many previous works (e.g. \cite{N14}, \cite{N16}, \cite{N15}) on computing eigenvalues and eigenvectors of the L shape domain in the literature, we indeed observed that \eqref{model:pde} on a uniform grid of size $21\times21$ computed rough estimates of eigenvalues and eigenfunctions fast.

One thing that we would like to emphasize in Fig.~\ref{fig1} is that our method provides as accurate results as possible on uniform grids. To see this, we would like to take a closer look at the 3rd and 14th eigenfunctions and corresponding eigenvalues. It is known that $u(x,y) =\sin(\pi x)\sin(\pi y)$ for $(x,y)\in (-1,1)\times(-1,1)\setminus[0,1)\times(-1,0]$ is an eigenfunction corresponding to the eigenvalue $2\pi^2$. If we discretize this eigenfunction on a uniform grid of size $81\times 81$ to have $u_{3}^{d}(x_i,y_j) = \sin(\pi x_i)\sin(\pi y_j)$ and compare it with the 3rd eigenfunction $u_3$ that we computed in Fig.~\ref{fig1}, then we have
$$
\|\tilde{u}_3^d - \tilde{u}_3\|_\infty =\max_{1\leq i,j\leq 81} \Big|\tilde{u}_3^{d}(x_i,y_j) - \tilde{u}_3(x_i,y_j)\Big| \sim 10^{-14},
$$
where $\tilde{u}_3^d, \tilde{u}_3$ are the $L^2$ normalizations of $u_3^d, u_3$, and 
$$
|\sum_{i,j}(-\Delta_d\tilde{u}_3^d)(x_i,y_j)\tilde{u}_3^d(x_i,y_j) - \lambda_3| \sim10^{-14},
$$
where $-\Delta_d$ is the discrete Laplace operator on the uniform grid and $\lambda_3$ is the computed eigenvalue $\lambda_3 = \gamma(\frac{1}{\|u_3\|}-1)$. The same is observed for the 14th eigenfunction, as well. As for the 8th and 9th eigenfunctions, we know that the corresponding eigenspace is spanned by the two simpler eigenfunctions $\sin(\pi x)\sin(2\pi y)$ and $\sin(2\pi x)\sin(\pi y)$ than the 8th and 9th eigenfunctions $u_8, u_9$ in Fig.~\ref{fig1}. The true eigenvalue is $5\pi^2$. By setting $\tilde{u}_8^d, \tilde{u}_9^d$ to be the discretized and $L^2$ normalized eigenfunctions from the true simpler ones, we computed the projections $v_8, v_9$ of $u_8,u_9$ onto the eigenspace spanned by $\{\tilde{u}_8^d, \tilde{u}_9^d\}$ and computed $\|u_8-v_8\|_\infty$ and $\|u_9-v_9\|_\infty$ and observed that the $L^\infty$ norms are about $10^{-14}$ implying that $\{u_8, u_9\}$ is indeed a basis for the same eigenspace. After normalizing the projections $v_8, v_9$ by their $L^2$ norms,  we also observed
$$
|\sum_{i,j}(-\Delta_d \tilde{v}_8)(x_i,y_j)\tilde{v}_8(x_i,y_j) - \lambda_8| \sim10^{-14},
$$
where $\lambda_8$ is the computed eigenvalue $\gamma(\frac{1}{\|u_8\|}-1)$, and same for $\tilde{v}_9$. However, when we compare computed eigenfunctions with true eigenfunctions of the form $\sin(n\pi x)\sin(m\pi y)$ on the fixed uniform grid of size $81\times 81$, we observed that the accuracy of computed eigenvalues is still preserved as we increase $n,m$, but the accuracy of computed eigenfunctions deteriorates.

\begin{figure}[!htb]
\begin{center}
\includegraphics[scale=0.35]{./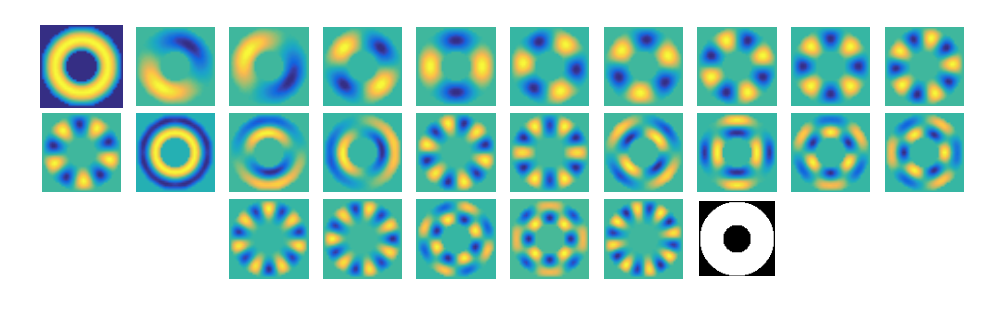} 
\end{center}
\caption{The first 25 eigenfunctions of the Laplace operator on an annulus domain corresponding to their eigenvalues in the increasing order. The last figure visualizes the domain. It's clear which eigenfunctions should correspond to the same eigenvalues.}
\label{fig2}
\end{figure}

As was shown in Fig.~\ref{fig2}, orthogonal eigenfunctions corresponding to the same eigenvalues that are rotations of each other can be expected when the domain is rotationally invariant. However, due to the uniform grid that we use and depending on the rotated angle, we observed that estimated eigenvalues in Fig.~\ref{fig2} can be slightly different for such eigenfunctions unlike the L shape domain in Fig.~\ref{fig1}.

In Fig.~\ref{fig3} and Fig.~\ref{fig4}, we computed eigenfunctions of the Laplace operator on other domains. Especially, we created an domain with no symmetry in Fig.~\ref{fig4}.

\begin{figure}[!htb]
\begin{center}
\includegraphics[scale=0.35]{./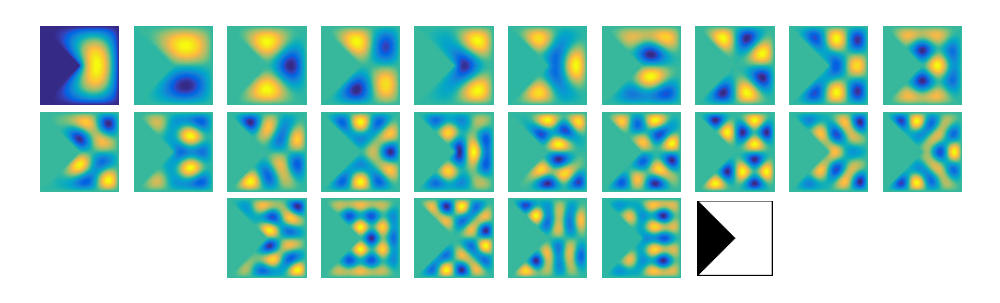} 
\end{center}
\caption{The first 25 eigenfunctions of the Laplace operator on a domain, visualized in the last, corresponding to their eigenvalues in the increasing order.}
\label{fig3}
\end{figure}

\begin{figure}[!htb]
\begin{center}
\includegraphics[scale=0.35]{./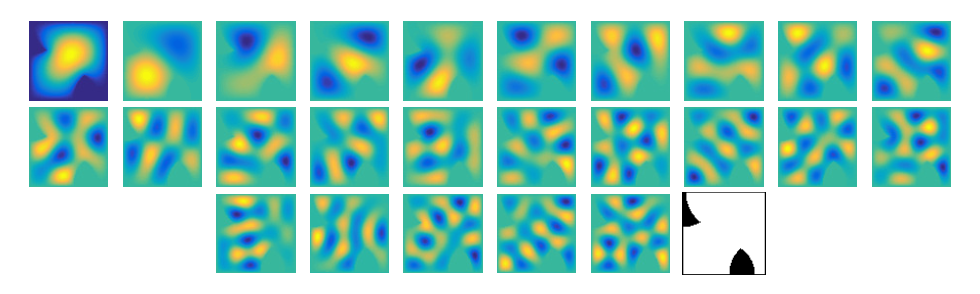} 
\end{center}
\caption{The first 25 eigenfunctions of the Laplace operator on a domain, visualized in the last, corresponding to their eigenvalues in the increasing order.}
\label{fig4}
\end{figure}

Since we provided a generalized eigenvalue problem in the previous work \cite{YK} as an application, we can also see how to apply the Newton's method \eqref{newton1} to solve $Ax = \lambda Bx$ by minimizing $F_{A,B}$, where $A,B$ are symmetric and $B$ is positive definite and $F_{A,B}$ is given by
$$
F_{A,B}(x)= \frac{1}{2}\langle x,Ax\rangle + \frac{\gamma}{2} \langle x,Bx\rangle -\gamma \sqrt{\langle x,Bx\rangle},
$$
with some $\gamma>0$ making $A+\gamma B$ positive definite. We note that minimizing $F_{A,B}$ by the Newton's method with $\|x\|_{B}:=\sqrt{x^TBx}$ generates a sequence $\{x_k\}$ satisfying
\begin{equation}\label{gep:newton}
\Big[\frac{1}{\gamma}A + \Big(1-\frac{1}{\|x_k\|_B}\Big)B + \frac{1}{\|x_k\|_B}\Big(\frac{Bx_k}{\|x_k\|_B}\Big)\Big(\frac{Bx_k}{\|x_k\|_B}\Big)^T\Big]x_{k+1} = \frac{Bx_k}{\|x_k\|_B},
\end{equation}
which can be rewritten as
\begin{equation}\label{newtonRay}
\Big[(A-\lambda_k B) + (\gamma+\lambda_k)\Big(\frac{Bx_k}{\|x_k\|_B}\Big)\Big(\frac{Bx_k}{\|x_k\|_B}\Big)^T\Big]x_{k+1} = \gamma\frac{ Bx_k}{\|x_k\|_B}
\end{equation}
with $\lambda_k = \gamma(\frac{1}{\|x_k\|_B}-1)$. Then,
depending on how to update $\lambda_k$ in \eqref{newtonRay}, either by $\lambda_k = \gamma(\frac{1}{\|x_k\|_B}-1)$ or by $\lambda_k = \frac{x_k^TAx_k}{x_k^TBx_k}$, we end up with either \eqref{gep:newton} or the type of the Rayleigh Quotient Iteration.

However, when we tested the two different eigenvalue update rules above with randomly selected positive definite matrices $A,B$, we observed numerically  that the update rule $\lambda = \gamma(\frac{1}{\|x\|_B}-1)$ found small eigenvalues much more often than the update rule $\lambda = \frac{x^TAx}{x^TBx}$. In fact, we noticed that $\lambda = \gamma(\frac{1}{\|x\|_B}-1)$ found the true smallest eigenvalues much more often than $\lambda=\frac{x^TAx}{x^TBx}$ as can be seen in Fig.~\ref{fig5}, which is interesting to confirm that $\lambda =\gamma(\frac{1}{\|x\|_B}-1)$ is from minimizing the functional $F_{A,B}$ trying to find the smallest eigenvalue.

\begin{figure}[!htb]
\begin{center}
\includegraphics[scale=0.33]{./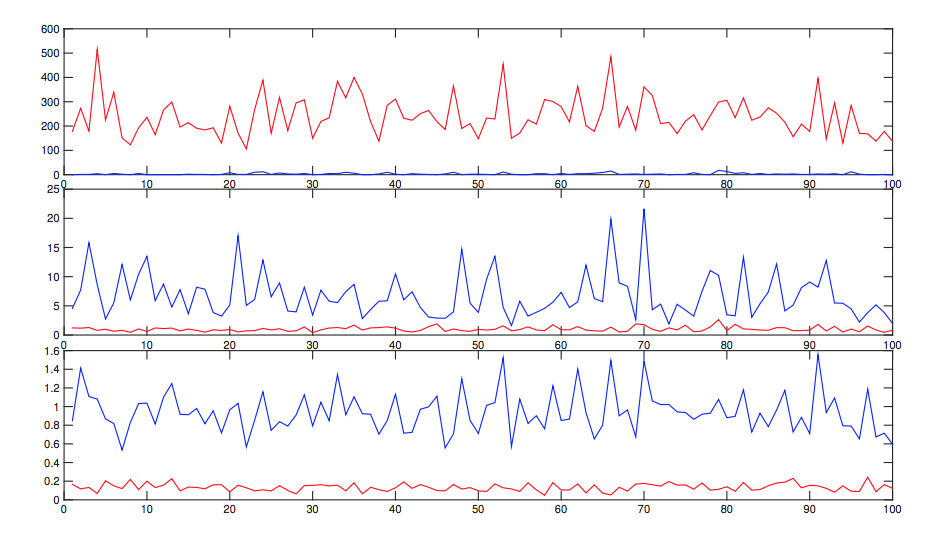}
\end{center}
\caption{100 trials with random positive definite pairs $(A_i,B_i)$, $i=1,2,\dots,100$. $A_i$'s and $B_i$'s are of size $10\times 10$. For each pair $(A_i,B_i)$, we test the two eigenvalue update rules 1000 times starting from random vectors. The {\it red} and {\it blue} colors indicate the two update rules: $\lambda = \gamma(\frac{1}{\|x\|_B}-1)$ {\it in red} and $\lambda = \frac{x^TAx}{x^TBx}$ {\it in blue}. {\bf Top Row}: For each $i=1,\dots,100$, we plot the number of times each update rule finds the true smallest eigenvalue $\lambda_{min}$. We counted the number of eigenvalues computed whose difference from $\lambda_{min}$ is less than $10^{-13}$. {\bf Middle Row}: For each $i=1,\dots,100$, we plot the maximum eigenvalue among the 1000 times trials. {\bf Bottom Row}: For each $i=1,\dots,100$, we plot the mean eigenvalue among the 1000 times trials.}
\label{fig5}
\end{figure}

We also performed the same experiment as in Fig.~\ref{fig5} with different sizes, i.e., $A_i$'s and $B_i$'s are of size $50\times 50$ shown in blue and of size $100\times 100$ shown in red in Fig.~\ref{fig6}. Solid lines indicate results using the update rule $\lambda = \gamma(\frac{1}{\|x\|_B}-1)$ and lines with circular dots indicate results using the other update rule $\lambda = \frac{x^TAx}{x^TBx}$. Fig.~\ref{fig6} clearly shows that the updated rule $\lambda = \gamma(\frac{1}{\|x\|_B}-1)$ tends to find smaller eigenvalues than the update rule $\lambda = \frac{x^TAx}{x^TBx}$. It is interesting to observe that the update rule $\lambda =\frac{x^TAx}{x^TBx}$ never found the smallest eigenvalues. 

\begin{figure}[!htb]
\begin{center}
\includegraphics[scale=0.33]{./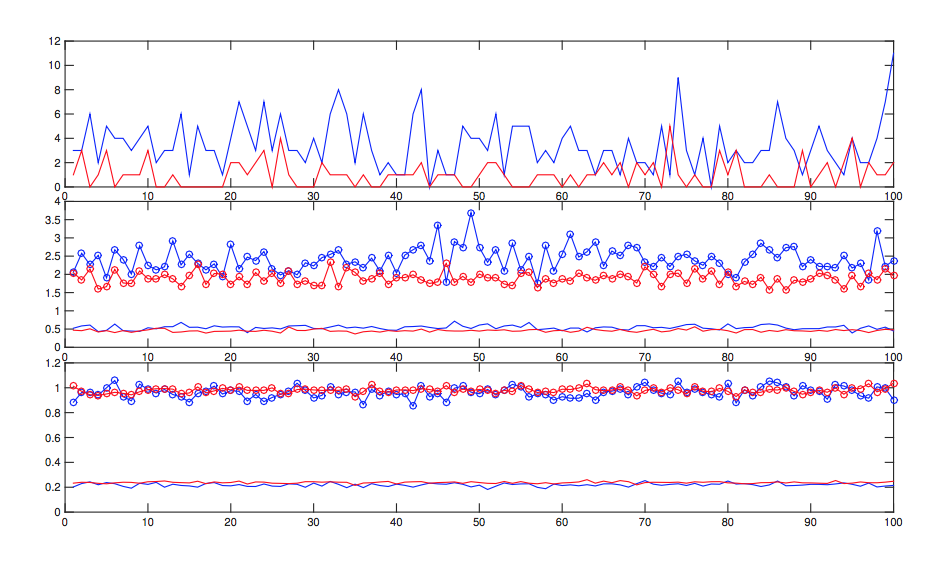}
\end{center}
\caption{The same experiments as in Fig.~\ref{fig5} with different matrix sizes. The red and blue colors indicate the two different sizes: $50\times50$ in blue and $100\times100$ in red. {\it Solid lines} are from the update rule $\lambda = \gamma(\frac{1}{\|x\|_B}-1)$ and the {\it lines with circular dots} are from the update rule $\lambda=\frac{x^TAx}{x^TBx}$. {\bf Top Row}: We counted the number of times that the true smallest eigenvalues were computed among the 1000 times trials for each random positive definite pair $(A_i,B_i)$, $i=1,\dots,100$. Only the counts using the update rule $\lambda = \gamma(\frac{1}{\|x\|_B}-1)$ are shown because the update rule $\lambda=\frac{x^TAx}{x^TBx}$ never found the smallest eigenvalues. {\bf Middle Row}: The maximum eigenvalue among the 1000 times trials for each $(A_i,B_i)$.  {\bf Bottom Row}: The mean eigenvalue among the 1000 times trials for each $(A_i,B_i)$.}
\label{fig6}
\end{figure}

Secondly, we will perform the same numerical experiments as was done with the gradient descent method: finding eigenfunctions of the Laplace operator on various domains. Especially, we will show a comparison result with the L shape experiment. When computing a few eigenfunctions corresponding to their eigenvalues in the increasing order, a drawback of the gradient descent method, besides its rate of convergence, is that one needs to apply any type of the Gram-Schmidt process at every iteration to make sure that the next eigenfunction being searched is orthogonal to the previous ones. This slows down the whole process of computing the first $n$ eigenfunctions quite a bit as $n$ increases. On the other hand, if an estimated eigenvalue $\lambda_k = \gamma(\frac{1}{\|u_k\|}-1)$ at the point $u_k$ is close enough to a true eigenvalue, then the Newton's method would generate a convergent sequence to a nearby critical point, i.e., a corresponding eigenfunction, which implies that it may be unnecessary to apply the Gram-Schmidt process. Hence, when computing many eigenfunctions, the Newton's method can speed up the total time spent significantly if we can find good starting points. Without any fast and efficient ideas combined, we would like to compare the total time spent by the gradient descent method with that by the Newton's method. to find the first $n$ eigenfunctions.

We computed the first 100 eigenfunctions of the Laplace operator on the L shape domain $((-1,0)\times(-1,1))\cup ([0,1)\times(0,1))$ on a uniform grid of size $81\times 81$ using the gradient descent method  and the Newton's method, separately. Having computed the first $k\geq 1$ unit eigenfunctions $\varphi_1,\dots, \varphi_k$ an knowing that the gradient descent method provides convergence to a global minimizer, we can find $u_0$ satisfying the constraints $\langle \varphi_j, u_0\rangle = 0$, $j=1,2,\dots,k$ and 
\begin{equation}\label{comparison:stopping}
\Big\|(-\Delta)u_0 + \gamma\Big(1-\frac{1}{\|u_0\|}\Big)u_0\Big\|<\epsilon
\end{equation}
with $\epsilon=0.1$ or $0.01$, in which case $\gamma(\frac{1}{\|u_0\|}-1)$ is close to the $k+1^{st}$ smallest eigenvalue, and consider $u_{0}$ as a good starting point for both the gradient descent method and the Newton's method to find the $k+1^{st}$ eigenfunction for comparison. Then, we measure the time spent for the gradient descent method to converge starting from $u_{0}$ under the orthogonality constraints and also the time spent for the Newton's method to converge starting from the same $u_{0}$ without any constraints.

In Fig.~\ref{fig7}, we present the amount of time elapsed for both cases: the gradient descent method with the orthogonality constraints vs the Newton's method \eqref{newton1} without any  constraints.

\begin{figure}[!htb]
\begin{center}
\includegraphics[scale=0.33]{./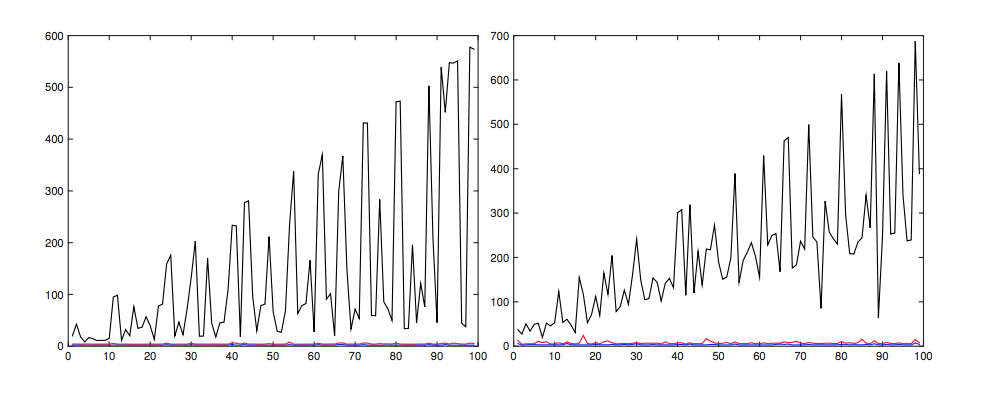}
\end{center}
\caption{Time (sec) elapsed for each eigenfunction computation from a starting point $u_0$ satisfying \eqref{comparison:stopping} with $\epsilon=0.01$ on the left and with $\epsilon=0.1$ on the right, {\bf Black Curve}: the gradient descent method applied to \eqref{eigeq:functional} with the orthogonality constraints, {\bf Red Curve}: the Newton's method \eqref{newton1} without the orthogonality constraints. We can observe a linear increase in time spent due to the linear increase in the number of constraints for the gradient descent method. {\bf Blue Curve}: the Rayleigh quotient iteration.}
\label{fig7}
\end{figure}
We can observe in this experiment that a linear increase in time due to the linear increase in the number of constraints for the gradient descent method is clear and that good starting points even for the gradient method can reduce the computational time significantly.

In Fig.~\ref{fig8}, we compare the Newton's method \eqref{newton1} with the Rayleigh quotient iteration with the same starting points used in Fig.~\ref{fig7}. 
\begin{figure}[!htb]
\begin{center}
\includegraphics[scale=0.4]{./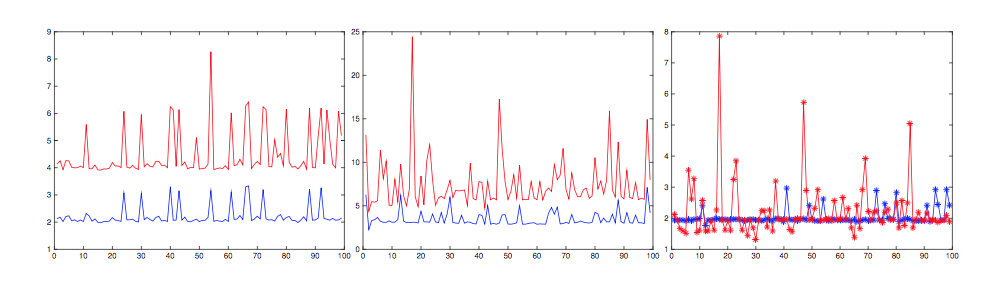}
\end{center}
\caption{Time (sec) elapsed for each eigenfunction computation for the Newton's method \eqref{newton1} and the Rayleigh quotient iteration already shown in Fig.~\ref{fig7}, {\bf Left}: $\epsilon=0.01$ in \eqref{comparison:stopping}, the Newton's method {\it in red} and the Rayleigh quotient iteration {\it in blue}, {\bf Middle}: $\epsilon=0.1$ in \eqref{comparison:stopping}, the Newton's method in red and the Rayleigh quotient iteration in blue, {\bf Right}: the ratio of the time spent for the Newton's method to the time spent for the Rayleigh quotient iteration with $\epsilon=0.01$ in blue and for $\epsilon=0.1$ in red. This numerical experiment shows that the Newton's method \eqref{newton1} is as fast as the Rayleigh quotient iteration.}
\label{fig8}
\end{figure}
As was expected, the gradient descent method with the orthogonality constraints takes much more time to converge, depending on the starting point and on the number of constraints $k\geq1$. On the other hand, the time spent for the Newton's method to compute each eigenfunction depends only on the starting point, showing that almost the same amount of time is required for convergence in computing each eigenfunction. Moreover, the Newton's method \eqref{newton1} presents a simliar rate of convergence as the Rayleigh quotient iteration. In addition, the absence of the orthogonality constraints doesn't allow the Newton's method to find orthogonal eigenfunctions corresponding to the same eigenvalues of multiplicity greater than 1. However, their linear independence is guaranteed by the random sections for the starting points.

In Fig.~\ref{fig9}, we confirm the phenomenon, observed in the example of a generalized eigenvalue problem above, that the eigenvalue update rule $\lambda = \gamma(\frac{1}{\|x\|}-1)$ for the Newton's method \eqref{newton1} finds the smallest eigenvalue much more often than the other update rule $\lambda =\frac{x^TAx}{\|x\|^2}$ for the Rayleigh quotient iteration   with a random starting point.

\begin{figure}[!htb]
\begin{center}
\includegraphics[scale=0.37]{./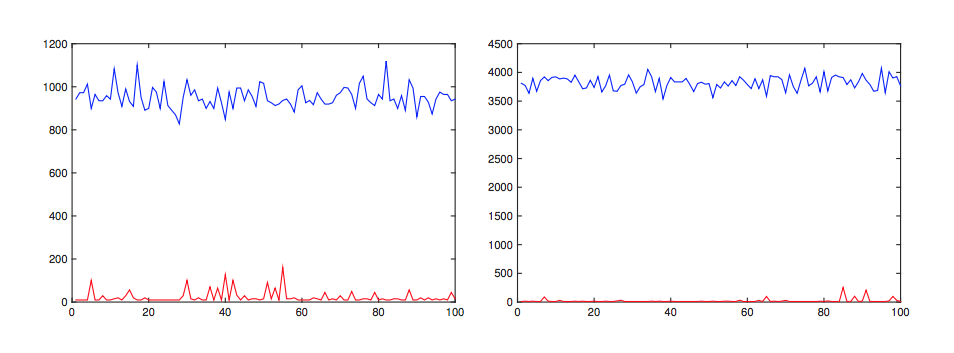}
\end{center}
\caption{Comparison between the Newton's method \eqref{newton1} {\it in red} and the Rayleigh quotient iteration {\it in blue}. We computed eigenvalues of the Laplace operator on the L shape domain by the Newton's method \eqref{newton1} and by the Rayleigh quotient iteration   from the same 100 random starting points. We show the actual eigenvalues computed by the two methods. {\bf Left}: The eigenvalues computed on a uniform grid of size $41\times 41$ are plotted and  53 eigenvalues out of 100 using the Newton's method in red are almost the same as the true smallest eigenvalue. {\bf Right}: The eigenvalues computed on a uniform grid of size $81\times 81$ are plotted and  67 eigenvalues out of 100 using the Newton's method in red are almost the same as the true smallest eigenvalue.}
\label{fig9}
\end{figure}

Two numerical experiments were performed on two different grid sizes, $41\times41$ and $81\times81$, of the L shape domain. We can clearly see that the Newton's method \eqref{newton1} can find the smallest eigenvalue easily, whereas the Rayleigh quotient iteration   can find eigenvalues near the largest one, which shows distinct nature of the two eigenvalue update rules.
 
Before closing this section, we would like to point out that  $$\|Ax_{k+1}- \lambda_kx_{k+1}\|=\Big|\gamma\Big(1-\Big(1+\frac{\lambda_k}{\gamma}\Big)(y_k^Tx_{k+1})\Big)\Big|$$
with $\lambda_k = \gamma(\frac{1}{\|x_k\|}-1)$ for the Newton's method  \eqref{newton1} and with $\lambda_k=\frac{x^TAx}{\|x\|^2}$ for the Rayleigh quotient method   allows for consideration of a stopping criterion
$$
\Big|\gamma\Big(1-\Big(1+\frac{\lambda_k}{\gamma}\Big)(y_k^Tx_{k+1})\Big)\Big|\sim 0,
$$
which is simpler to check than $\|Ax_{k+1}- \lambda_kx_{k+1}\|\sim 0$. We used this stopping criterion for all the experiments.

\section{Conclusion}
\label{conclusion}

In this paper, we proposed and analyzed an unconstrained framework for eigenvalue problems. For practical computation of eigenvectors (or eigenfunctions), we considered the gradient descent method and the Newton's method, and provided proofs for convergence. 

In fact, we showed that applying the gradient descent method guarantees to find a global minimizer of our proposed functional and allows us to find eigenvectors in the increasing order of their corresponding eigenvalues without matrix inversion. We began our discussion with symmetric matrices, however, it turned out that general diagonal matrices can be dealt with similarly. We were also able to provide quantitative analysis on the error in eigenvector estimation. Moreover, the same framework turned out to be applicable not only to finite dimensional cases, but also to infinite dimensional cases. 

It is interesting to note that the functional $F_{A,B}$ in \eqref{eigeq:functional} is nonconvex, yet written in a special form of difference of convex functionals, one of which is quadratically convex, and the other is linearly convex, and is differentiable infinitely many times at every point but zero having the property that local minimizers are global minimizers. This allows us to consider the Newton's method for faster convergence. Indeed, we provided its detailed analysis.

We observed and presented that numerical experiments confirm the theoretical results and our framework provides an easier way of solving eigenvalue problems numerically.

Finally, we would like to point out  that the same framework extends to nonlinear operators as well. One such example is the $p$-Laplacian operator $-\Delta_p$, $1<p<N$: given a bounded domain $\Omega\in\R^N$ with a Lipschitz boundary $\partial\Omega$, find $\varphi$ satisfying
\begin{equation}\label{eigpr:2}
\begin{cases}
-\Delta_p\varphi = \lambda |\varphi|^{p-2}\varphi,&\text{ in }\ \Omega,\\
\quad\ \ \ \varphi = 0,&\text{ on } \partial\Omega,
\end{cases}
\end{equation}
where $-\Delta_p\varphi = \divg(|\nabla \varphi|^{p-2}\nabla \varphi)$ by minimizing the functional $F_p$ defined by
\begin{equation}\label{eigeq:functionalp}
F_p(u) = \frac{1}{p}\int_\Omega |\nabla u(x)|^p dx + \frac{\gamma}{p}\int_\Omega |u(x)|^pdx - \gamma\Big(\int_\Omega |u(x)|^pdx\Big)^{\frac{1}{p}}
\end{equation}

Note that 
$$
\min_{u\in W_0^{1,p}(\Omega)} F_p(u)
$$
exists and a minimizer satisfies \eqref{eigpr:2}. We plan to extend theoretical and numerical aspects of our proposed framework discussed in this paper to applications, especially nonlinear and infinite dimensional cases (e.g. \eqref{eigpr:2}) to reveal what have not been known through conventional methods in finding eigenvalues and eigenfunctions. 

%
%
%
%
%

\section*{acknowledgements}
This work was supported partially by Basic Science Research Program through the National Research Foundation of Korea (NRF) funded by the Ministry of Science, ICT \& Future Planning (NRF-2014R1A1A1002667) and partially by UNIST (1.140074.01). Moreover, I would like to thank my friend and colleague, Ernie Esser, Ph.D., with whom I had fruitful discussions on various topics in image processing. I could not have initiated this project without his inspiration. 


\begin{thebibliography}{}
%
%
\bibitem{N12} M. Belkin, J. Sun and Y. Wang, {\em Constructing Laplace operator from point clouds in $\R^d$}, In Proceedings of the Twentieth Annual ACM-SIAM Symposium on Discrete Algorithms, pp. 1031-1040, Philadelphia, PA, USA (2009)
\bibitem{N1} J.E. Dennis and R.A. Tapia, {\em Inverse, shifted inverse, and Rayleigh quotient iteration as Newton's method}, http://www.caam.rice.edu/\~{}rat/cv/RQI.pdf
\bibitem{N14} L. Fox, P. Henrici and C. Moler, {\em Approximations and bounds for eigenvalues of elliptic operators}, SIAM J. Numer. Anal., 4(1), pp. 89-102 (1967)
\bibitem{N4} X.B. Gao, G.H. Golub and L.Z. Liao, {\em Continuous methods for sysmmetric generalized eigenvalue problems}, Linear Alg. Appl., 428, pp. 676-696 (2008)
\bibitem{N8} J.M. Gedicke, {\em On the numerical analysis of eigenvalue problems}, http://edoc.hu-berlin.de/dissertationen/gedicke-joscha-micha-2013-06-10/PDF/gedicke.pdf, Dissertation (2013)
\bibitem{N13} R. Kolluri, J.R. Shewchuk and J.F. O'Brien, {\em Spectral surface reconstruction from noisy point clouds}, Proceedings of the 2004 Eurographics/ACM SIGGRAPH Symposium on Geometry Processing, SGP '04, pp. 11-21 (2004)
\bibitem{N5} V. Kuleshov, {\em Fast algorithms for sparse principal component analysis based on Rayleigh quotient iteration}, JMLR W\& CP 28(3), pp. 1418-1425 (2013)
\bibitem{N11} R. Lai, J. Liang and H. Zhao, {\em A local mesh method for solving PDEs on point clouds}, Inverse Probl. Imaging, 7(3), pp. 737-755 (2016)
\bibitem{N7} E. Mengi, E.A. Yildirim and M. Kilic, {\em Numerical optimization of eigenvalues of hermitian matrix functions}, SIAM J. Matrix Anal. Appl., 35(2), pp. 699-724 (2014)
\bibitem{N3} Y. Notay, {\em Convergence analysis of inexact Rayleigh quotient iteration}, SIAM J. Matrix Anal. Appl., 24(3), pp. 627-644 (2003)
\bibitem{N9} G.L.G. Sleijpen and H.A.Van der Vorst, {\em A Jacobi-Davidson iteration method for linear eigenvalue problems}, SIAM J. Matrix Anal. Appl., 17(2), pp. 401-425 (1996)
\bibitem{N6} G. Still, {\em Computable Bounds for Eigenvalues and Eigenfunctions of Elliptic Differential Operators}, Numer. Math., 54, pp. 201-223 (1988)
\bibitem{N16} G. Still, {\em Approximation theory methods for solving elliptic eigenvalue problems}, Z. Angew. Math. Mech., 83(7), pp. 468-478 (2003)
\bibitem{N15} Q. Yuan and Z. He, {\em Bounds to eigenvalues of the Laplacian on L-shaped domain by variational methods}, J. Comput. and Appl. Math., 233, pp. 1083-1090 (2009)

%
\end{thebibliography}


\end{document}